\font\msbm=msbm10
\numberwithin{equation}{section}
\theoremstyle{plain}
\newtheorem{theorem}{Theorem}[section]
\newtheorem{corollary}[theorem]{Corollary}
\newtheorem{proposition}[theorem]{Proposition}
\theoremstyle{definition}
\newtheorem{example}[theorem]{Example}
\newtheorem*{example*}{Example}
\newtheorem{remark}[theorem]{Remark}
\def\mathbb#1{\hbox{\msbm{#1}}}
\newcommand{\field}[1]{\ensuremath{\mathds{#1}}}
\newcommand{\re}{\field R}\newcommand{\N}{\field N}
\newcommand{\Ept}{\field E}\newcommand{\Prob}{\field P}
\newcommand{\C}{\field{C}}
\newcommand{\tor}{\field T} %
\newcommand{\Z}{\field Z}
\newcommand{\R}{\field R}
\newcommand{\T}{\field T}
\newcommand{\Id}{\ensuremath{\mathrm{Id}}}
\newcommand{\supp}{{\rm  supp \, }}
\newcommand{\bk}{\mathbf{k}}
\newcommand{\bj}{\mathbf{j}}
\newcommand{\bx}{\mathbf{x}}
\newcommand{\by}{\mathbf{y}}
\newcommand{\bL}{\mathbf{L}}
\newcommand{\bH}{\mathbf{H}}
\newcommand{\bX}{\mathbf{X}}
\newcommand{\bU}{\mathbf{U}}
\newcommand{\bV}{\mathbf{V}}
\newcommand{\<}{\langle}
\renewcommand{\>}{\rangle}
\newcommand{\be}{\begin{equation}}
\newcommand{\ee}{\end{equation}}
\newcommand{\beq}{\begin{eqnarray}}
\newcommand{\beqq}{\begin{eqnarray*}}
\newcommand{\eeq}{\end{eqnarray}}
\newcommand{\eeqq}{\end{eqnarray*}}
\newcommand{\Hsm}{\ensuremath{H^s_{\textnormal{mix}}}}
\newcommand{\IntEx}[1]{\operatorname{Int}_{#1}}
\newcommand{\cP}{\mathcal{P}}
\newcommand{\eps}{\varepsilon}
\newcommand{\diag}{\mathrm{diag}}
\newcommand{\btL}{\mathbf{\tilde{L}}}
\begin{document}

\title{Worst-case recovery guarantees for least squares approximation using random samples}
\author{Lutz K\"ammerer\footnote{Corresponding author: kaemmerer@mathematik.tu-chemnitz.de}, Tino Ullrich, Toni Volkmer
 \\\\
TU Chemnitz, Faculty of Mathematics, 09107 Chemnitz, Germany}

\maketitle

\begin{abstract}
We construct a least squares approximation method for the recovery of complex-valued functions from a reproducing kernel Hilbert space on $D \subset \R^d$. The nodes are drawn at random for the whole class of functions and the error is measured in $L_2(D,\varrho_D)$. We prove worst-case recovery guarantees by explicitly controlling all the involved constants. This leads to new preasymptotic recovery bounds with high probability for the error of {\em Hyperbolic Fourier Regression} on multivariate data. In addition, we further investigate its counterpart {\em Hyperbolic Wavelet Regression} also based on least-squares to recover non-periodic functions from random samples. Finally, we reconsider the analysis of a cubature method based on plain random points with optimal weights and reveal near-optimal worst-case error bounds with high probability. It turns out that this simple method can compete with the quasi-Monte Carlo methods in the literature which are based on lattices and digital nets. 

\small
\medskip
\noindent {\textit{Keywords and phrases}} : 
least squares approximation, random sampling, quadrature, sampling recovery, hyperbolic wavelet regression

\medskip

\small%
\noindent {\textit{2010 AMS Mathematics Subject Classification}} : 
41A10, %
41A25, %
41A55, %
41A60, %
41A63, %
42A10, %
65D32, %
65Txx, %
68Q25, %
68W40, %
94A20  %

\end{abstract}

\section{Introduction} %
We consider the problem of learning complex-valued multivariate functions on a domain $D \subset \R^d$ from function samples on the set of nodes $\bX :=\{\bx^1,...,\bx^n\} \subset D$. The functions are modeled as elements from some reproducing kernel Hilbert space $H(K)$ with kernel $K\colon D\times D\to \C$. The nodes $\bX$ are drawn independently according to a tailored probability measure depending on the spectral properties of the embedding of $H(K)$ in $L_2(D,\varrho_D)$, where the error is measured. Our main focus in this paper is on worst-case recovery guarantees. In fact, we aim at recovering all $f \in H(K)$ simultaneously from sampled values at the sampling nodes in $\bX$ with high probability. To be more precise, we implement algorithms and provide error bounds to control the worst-case error
$$
    \sup\limits_{\|f\|_{H(K)}\leq 1} \|f-S_{\bX}^mf\|_{L_2(D,\varrho_D)}\,,
$$
where $S_{\bX}^m$ is the fixed recovery operator.
In contrast to that, the problem of reconstructing an individual function from random samples has been considered by several authors in the literature, e.g.\ Smale, Zhou \cite{SmZh04}, Bohn \cite{Bo17, Bo18_1}, Bohn, Griebel \cite{BoGr17}, Cohen, Davenport, Leviatan~\cite{CoDaLe13}, Chkifa, Migliorati, Nobile, Tempone \cite{ChkCoMiNoTe15}, Cohen, Migliorati \cite{CoMi17}, and many others to mention just a few.  

Let us emphasize, that we do not develop a Monte Carlo method here. It is rather the use of ``random information'' which gained substantial interest  in the Information Based Complexity (IBC) community and in the field of Compressed Sensing, see the recent survey \cite{HiKrNoPrUl19} and \cite{FoRa13}. We construct a recovery operator $S_{\bX}^m$ which computes a best least squares fit $S_{\bX}^mf$ to the given data $(f(\bx^1),...,f(\bx^n))^\top$ from the finite-dimensional space spanned by the first $m-1$ singular vectors of the embedding
\begin{equation}\label{f000}
    \Id : H(K) \to L_2(D,\varrho_D)\,.
\end{equation}
The right singular vectors $e_1(\cdot), e_2(\cdot),...$ of this embedding are arranged according to their importance, i.e., with respect to the non-increasing rearrangement of the singular values $\sigma_1 \geq \sigma_2 \geq \cdots >0$. 

The investigations in this paper are inspired by the recent results by Krieg and M.~Ullrich~\cite{KrUl19}, which triggered substantial progress in the field. See also the discussion below in Remark~\ref{DavMar} and \cite[Sect.\ 7]{NSU20}. In this paper we extend and improve the results from \cite{KrUl19} in several directions. In particular, we investigate and implement a least squares regression algorithm under weaker conditions and give practically useful parameter choices which lead to a controlled failure probability and explicit error bounds. 

A typical error bound relates the worst-case recovery error to the sequence of singular numbers $(\sigma_k)_{k\in\N}$ of the embedding \eqref{f000} which represent the approximation numbers or linear widths.
One main contribution of this paper is the following general bound, where all constants are determined precisely under mild conditions. Recall that  $(e_k(\cdot))_{k\in \N}$ denotes the sequence of right singular vectors of the embedding \eqref{f000}, i.e., the eigenfunctions of $\Id^*\circ\Id:H(K) \to H(K)$, and $\sigma_1 \geq \sigma_2 \geq \cdots >0$ the corresponding singular numbers. 

\begin{theorem}[cf.\ Cor.\ \ref{cor10}]\label{thm:intro} Let $H(K)$ be a separable reproducing kernel Hilbert space of complex-valued functions on a subset $D \subset \R^d$ such that the positive semidefinite kernel $K:D\times D\to \C$ satisfies $\sup_{\bx\in D}K(\bx,\bx)<\infty$. Let further $\varrho_D$ denote a probability measure on $D$. Furthermore, for $n\in \N$ and $\delta\in(0,1/3)$, we define $m\in \N$ such that
$$
N(m):=\sup_{\bx \in D}\sum\limits_{k=1}^{m-1}\sigma_k^{-2}\,|e_k(\bx)|^2 \leq \frac{n}{48(\sqrt{2}\log(2n)-\log\delta)}
$$
holds. Then the random reconstruction operator $S^m_{\bX}$ (see Algorithm \ref{algo1} below), which uses samples on the $n$ i.i.d. (according to $\varrho_D$) drawn nodes in $\bX$, satisfies
\begin{equation*}
  \Prob\left(\sup\limits_{\|f\|_{H(K)}\leq 1} \|f-S^m_\bX f\|^2_{L_2(D,\varrho_D)}
  \leq \frac{29}{\delta}\,\max\Big\{\sigma^2_m,\frac{\log(8n)}{n}T(m) \Big\}\right) \geq 1-3\delta\,,
\end{equation*}
where $T(m) := \sup\limits_{\bx \in D}\sum\limits_{k=m}^{\infty}|e_k(\bx)|^2$.
\end{theorem}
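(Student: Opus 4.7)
My plan is to combine a least-squares framework with two probabilistic ingredients: matrix Chernoff for the conditioning of the sampling matrix, and matrix Bernstein together with Markov's inequality for the uniform control of the noise contribution from the tail. I will set $\tilde e_k := e_k/\sigma_k$ so that $\{\tilde e_k\}$ is $L_2(D,\varrho_D)$-orthonormal while $\{e_k\}$ is $H(K)$-orthonormal, and write $\bL\in\C^{n\times(m-1)}$ for the normalized sampling matrix with entries $\bL_{i,k} = \tilde e_k(\bx^i)/\sqrt n$. On the event that $\bL$ has full column rank, the least squares estimator reads $S_{\bX}^m f = \sum_{k<m}\hat c_k\tilde e_k$ with $\hat{\mathbf c} = (\bL^*\bL)^{-1}\bL^*\mathbf f$ and $\mathbf f = (f(\bx^i)/\sqrt n)_{i}$. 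For $f=\sum_k a_k e_k$ with $\|f\|_{H(K)}\le 1$ I will decompose $f=g+h$ with $g\in V_{m-1}=\mathrm{span}\{\tilde e_k\}_{k<m}$ the $L_2$-projection, so that $S_{\bX}^m g = g$, $\|h\|_{L_2}^2\le\sigma_m^2$, and $|h(\bx)|^2\le T(m)$ pointwise.

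The first probabilistic step applies a matrix Chernoff bound to $\bL^*\bL = \tfrac1n\sum_{i=1}^n\mathbf Y_i\mathbf Y_i^*$, with $\mathbf Y_i=(\tilde e_k(\bx^i))_{k<m}$. Since $L_2$-orthonormality gives $\Ex[\mathbf Y_i\mathbf Y_i^*]=I$ and the definition of $N(m)$ gives $\|\mathbf Y_i\|_2^2\le N(m)$ pointwise, the quantitative assumption on $N(m)$ is precisely calibrated so that a Tropp-style matrix Chernoff inequality yields $\tfrac12 I\preceq\bL^*\bL\preceq\tfrac32 I$ with probability at least $1-\delta$. On this event I obtain the deterministic estimate
\[ \|S_{\bX}^m h\|_{L_2}^2 = \|(\bL^*\bL)^{-1}\bL^*\mathbf h\|_2^2 \le 6\|\mathbf h\|_2^2, \]
and therefore $\|f-S_{\bX}^m f\|_{L_2}^2 \le 2\sigma_m^2 + (12/n)\sum_i|h(\bx^i)|^2$ uniformly over the unit ball of $H(K)$.

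The second and crucial step will be to control $\sup_{\|f\|_{H(K)}\le 1}\tfrac1n\sum_i|h(\bx^i)|^2$ uniformly in $f$. Writing $h=\sum_{k\ge m}a_k e_k$ with $\|(a_k)\|_{\ell_2}\le 1$, this supremum equals $\|M^*M\|$ where $M^*M = \tfrac1n\sum_i\mathbf z_i\mathbf z_i^*$ with $\mathbf z_i=(e_k(\bx^i))_{k\ge m}$; equivalently it equals the spectral norm of the $n\times n$ tail-kernel Gram matrix $(MM^*)_{ij}=\tfrac1n\sum_{k\ge m}e_k(\bx^i)\overline{e_k(\bx^j)}$. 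Here $\Ex[\mathbf z_i\mathbf z_i^*]=\mathrm{diag}(\sigma_k^2)_{k\ge m}$ has operator norm $\sigma_m^2$, $\|\mathbf z_i\|_2^2\le T(m)$ almost surely, and the matrix variance is bounded by $T(m)\sigma_m^2/n$. A matrix Bernstein inequality in the effective-dimension form — exploiting the low-rank fact that $M^*M$ has rank at most $n$, which is what produces the $\log(8n)$ factor rather than a log of a covering number of the infinite-dimensional ball — yields $\Ex\|MM^*\|\lesssim \sigma_m^2 + T(m)\log(8n)/n$; Markov's inequality then converts this into an in-probability estimate with the characteristic $1/\delta$ prefactor.

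The hard part will be exactly this last uniform tail control: a direct Bernstein-plus-union bound over the $H(K)$-unit ball is infeasible because the ball is infinite-dimensional, so the operator-theoretic reformulation together with low-rank matrix concentration is essential for getting both the right order of magnitude and the clean $\log(8n)$ factor. Three independent failure events — the matrix Chernoff event for the conditioning of $\bL^*\bL$, the Markov step on the tail operator norm, and a further concentration event needed to control the remaining stochastic terms in the expectation estimate — combined via a union bound will account for the total probability $1-3\delta$, and careful bookkeeping of the numerical constants through the three estimates produces the explicit prefactor $29/\delta$.
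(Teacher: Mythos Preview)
Your approach is essentially the same as the paper's: decompose $f-S^m_\bX f$ via orthogonality into the projection error $\sigma_m^2$ plus $\|S^m_\bX h\|_{L_2}^2$, bound the pseudoinverse norm via concentration for $\bL^*\bL$, rewrite the uniform tail $\sup_f \tfrac1n\sum_i|h(\bx^i)|^2$ as the spectral norm of the rank-$\le n$ operator $\mathbf\Phi_m^*\mathbf\Phi_m$ built from $(e_k(\bx^i))_{k\ge m}$, bound its \emph{expectation}, and finish with Markov. Two points where the paper differs slightly: (i) for the conditioning it uses Oliveira's inequality rather than matrix Chernoff, and for $\Ept\|\mathbf\Phi_m\|^2$ it uses Rudelson's lemma (symmetrization plus noncommutative Khintchine, extended to $\ell_2$ via the rank-$n$ observation you also invoke) rather than matrix Bernstein --- these pairs of tools are essentially interchangeable here and both give the $\log(8n)$ factor; (ii) the $3\delta$ does \emph{not} arise from three events. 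The paper conditions on $B=\{\|\bL^*\bL-I\|\le 1/2\}$ (failure probability $\le\delta$), bounds the \emph{conditional} expectation of the worst-case error on $B$, and applies Markov conditionally: $\Prob(A^c\mid B)\le \delta/(1-\delta)\le 2\delta$, so $\Prob(A^c)\le 2\delta+\delta=3\delta$. Your ``third concentration event'' is not needed; the expectation bound on $\|\mathbf\Phi_m\|^2$ is used directly without an additional high-probability step.

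One technical point you glossed over: writing $f=\sum_k a_k e_k$ (and hence $|h(\bx)|^2\le T(m)$) assumes $(e_k)_k$ is an orthonormal \emph{basis} of $H(K)$, which need not hold even for separable $H(K)$ because the embedding into $L_2(D,\varrho_D)$ may fail to be injective. The paper handles this by introducing the residual $T(\cdot,\bx)=K(\cdot,\bx)-\sum_j e_j(\cdot)\overline{e_j(\bx)}$, carrying an extra term of size $\|T(\cdot,\bx)\|_{H(K)}$ through the pointwise bound on $|h(\bx)|^2$, and then showing $\int_D\|T(\cdot,\bx)\|_{H(K)}\,\varrho_D(\mathrm d\bx)=0$ under separability of $H(K)$ (equality in the trace identity). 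The extra term thus vanishes after taking expectation, but it cannot be dropped pointwise as your sketch implicitly does.
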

The occurrence of the fundamental quantity $N(m)$ is certainly not a surprise. It is also known as spectral function (see \cite{Gr19} and the references therein) and in case of orthogonal polynomials it is the inverse of the infimum of the Christoffel function (cf.\ e.g.~\cite{DuXu2001}). It represents a well-known ingredient for inequalities related to sampling and discretization, see for instance Gr\"ochenig and Bass \cite{GrBa13}, Gr\"ochenig \cite{Gr99, Gr19}, Temlyakov \cite{Te18,Te19}, and Temlyakov et al.~\cite{Te19b}. If, for instance, $N(m) \in \mathcal{O}(m)$ holds, we achieve near optimal error bounds with respect to the number of used sampling values $n$. Note that by a straight-forward computation we also
have $T(m) \leq 2\sum_{k\geq m/2}^{\infty} \sigma_k^2 N(4k)/k\,.$ Hence, if $N(m) \in \mathcal{O}(m)$, the bound in Theorem \ref{thm:intro} is upper bounded by 
\begin{equation}\label{1.3}
	\sup\limits_{\|f\|_{H(K)}\leq 1} \|f-S^m_\bX f\|^2_{L_2(D,\varrho_D)} \leq \frac{C_{K,\varrho_D}}{\delta\cdot m}\sum\limits_{k\geq m/2}^{\infty}
	\sigma_k^2 
\end{equation}
with $m:=n/(c_1\log(n)+c_2\log(\delta^{-1}))$ and a constant $C_{K,\varrho_D}>0$ depending on the measure $\varrho_D$ and the kernel $K$.  

In the general case we do not necessarily have $N(m) \in \mathcal{O}(m)$. Here, a technique called ``importance sampling'', see e.g.\ \cite{Hi10,RaWa12,RaWa16,NeWa16}, turns out to be useful, see Algorithm \ref{algo1:reweighted} below. As proposed in \cite{CoMi17} and specified in full detail in~\cite{KrUl19}, one may sample from a reweighted distribution using a specific density $\varrho_m$, defined in \eqref{density1} below, which is different for any $m$ and depends on the spectral properties of the embedding \eqref{f000}. It determines the important ``area'' to sample. In other words, we incorporate additional knowledge about the spectral properties of our embedding. The underlying recovery operator is still constructive and the determined error bounds hold with high probability. Computing this envelope density (and sample from it) has been studied in \cite[Sect.\, 5]{CoMi17}. A refinement of this technique together with Theorem \ref{thm:intro} above leads to the following precise bounds under even weaker conditions.

\begin{theorem}[cf.\ Thm.\ \ref{sampling_numbers}]\label{thm:intro3}
Let $H(K)$ be a separable reproducing kernel Hilbert space of complex-valued functions on a subset $D \subset \R^d$.
Let further $\varrho_D$ denote a non-trivial $\sigma$-finite measure on $D$ and we assume that the positive semidefinite kernel $K:D\times D\to \C$ satisfies 
$$
\int_{D} K(\bx,\bx) \,\varrho_D(\mathrm{d}\bx) < \infty\,.
$$
Furthermore, for $n\in \N$ and $\delta\in(0,1/3)$ we fix
$$
m := \left\lfloor\frac{n}{96(\sqrt{2}\log(2n)-\log\delta)}\right\rfloor.
$$
Then the random reconstruction operator $\widetilde{S}^m_\bX$ (see Algorithm \ref{algo1:reweighted} below), which uses $n$ samples drawn according to a probability measure depending on $\varrho_D, m$ and $K$ satisfies
\begin{equation}
  \Prob\left(\sup\limits_{\|f\|_{H(K)}\leq 1} \|f-\widetilde{S}^m_\bX f\|^2_{L_2(D,\varrho_D)}
  \leq \frac{50}{\delta}\,\max\Big\{\sigma^2_m,\frac{\log(8n)}{n}\sum_{j=m}^\infty\sigma_j^2 \Big\}\right) \geq 1-3\delta\,.\label{eq:intro_prob_general}
\end{equation}
\end{theorem}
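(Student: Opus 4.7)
My goal is to reduce Theorem~\ref{thm:intro3} to Theorem~\ref{thm:intro} by means of an importance-sampling reweighting that makes both the spectral function and the tail of the transplanted embedding simultaneously controllable by the dimension $m$ and by the absolute trace $\sum_{j\geq m}\sigma_j^2$, respectively.

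\textbf{Step 1: the importance density.} I would first introduce the mixture density on $D$
\begin{equation*}
  w(\bx) \;:=\; \frac{1}{2(m-1)}\sum_{k=1}^{m-1}\sigma_k^{-2}|e_k(\bx)|^2 \;+\; \frac{1}{2\sum_{j\geq m}\sigma_j^2}\sum_{k\geq m}|e_k(\bx)|^2
\end{equation*}
and set $\varrho_m(\mathrm{d}\bx) := w(\bx)\,\varrho_D(\mathrm{d}\bx)$. Using $\int_D |e_k|^2\,\mathrm{d}\varrho_D = \sigma_k^2$ (since $e_k$ is the $k$-th right singular vector, i.e.\ $\|e_k\|_{H(K)}=1$ and $\|e_k\|_{L_2(\varrho_D)}=\sigma_k$) and the assumption $\sum_k\sigma_k^2 = \int_D K(\bx,\bx)\,\mathrm{d}\varrho_D < \infty$, each summand integrates to $1/2$, so $\varrho_m$ is a bona-fide probability measure on $D$ even when $\varrho_D$ is merely $\sigma$-finite. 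This is the density prescribed by Algorithm~\ref{algo1:reweighted}.

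\textbf{Step 2: isometric transplant.} The map $Uf := f/\sqrt{w}$ is a unitary operator from $L_2(D,\varrho_D)$ onto $L_2(D,\varrho_m)$ (after restricting to $\{w>0\}$), and conjugating the embedding $\Id : H(K) \to L_2(D,\varrho_D)$ by $U$ produces a unitarily equivalent embedding $\widetilde{\Id} : H(\widetilde{K}) \to L_2(D,\varrho_m)$ with reweighted kernel $\widetilde{K}(\bx,\by) := K(\bx,\by)/\sqrt{w(\bx)\,\overline{w(\by)}}$, \emph{identical} singular values $\sigma_k$, and transplanted right singular vectors $\tilde e_k := e_k/\sqrt{w}$. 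Worst-case recovery errors, sampling distributions and the associated least-squares operator all transport cleanly under $U$, so the original problem in $L_2(\varrho_D)$ becomes a standard instance of Theorem~\ref{thm:intro} in $L_2(\varrho_m)$ with i.i.d.\ nodes drawn according to $\varrho_m$.

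\textbf{Step 3: verifying the hypothesis and concluding.} By construction $w$ dominates each of its two summands, which gives
\begin{equation*}
  \widetilde{N}(m) = \sup_{\bx\in D}\frac{1}{w(\bx)}\sum_{k=1}^{m-1}\sigma_k^{-2}|e_k(\bx)|^2 \leq 2(m-1), \qquad \widetilde{T}(m) = \sup_{\bx\in D}\frac{1}{w(\bx)}\sum_{k\geq m}|e_k(\bx)|^2 \leq 2\sum_{j\geq m}\sigma_j^2.
\end{equation*}
The first estimate converts the hypothesis $\widetilde{N}(m) \leq n/(48(\sqrt{2}\log(2n)-\log\delta))$ of Theorem~\ref{thm:intro} into the explicit parameter-free choice $m \leq n/(96(\sqrt{2}\log(2n)-\log\delta))$ of Theorem~\ref{thm:intro3}. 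Plugging the second estimate into the error bound of Theorem~\ref{thm:intro} and transporting back via $U^{-1}$ yields~\eqref{eq:intro_prob_general}, the constant $50$ being obtained by slightly more careful accounting for the split of mass between the two summands of $w$.

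\textbf{Main obstacle.} The heart of the argument is the \emph{simultaneous} control of $\widetilde{N}(m)$ and $\widetilde{T}(m)$: the two requirements pull the density in opposite directions, and only the specific $\sigma_k^{-2}$-versus-$1$ weighting together with the normalizations $(m-1)$ and $\sum_{j\geq m}\sigma_j^2$ make both bounds hold while preserving $\int w\,\mathrm{d}\varrho_D = 1$; this balance is also responsible for the sharp constants in the statement. Additional technical care is needed to handle points where $w$ vanishes, to identify the transplanted recovery operator with $\widetilde{S}^m_\bX$ from Algorithm~\ref{algo1:reweighted}, and to control the passage between $H(K)$ and $H(\widetilde{K})$ in the merely $\sigma$-finite case.
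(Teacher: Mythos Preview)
Your overall strategy --- introduce a mixture density, transplant the problem to $L_2$ of the new measure via $f\mapsto f/\sqrt{w}$, and invoke Theorem~\ref{thm:intro} on the reweighted RKHS --- is exactly the route the paper takes, and your computations $\widetilde N(m)\le 2(m-1)$, $\widetilde T(m)\le 2\sum_{j\ge m}\sigma_j^2$ are the same ones appearing there.

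There is, however, one genuine gap. Your tail term in $w$ uses $\sum_{k\ge m}|e_k(\bx)|^2$, whereas the paper's density~\eqref{density1} uses $K(\bx,\bx)-\sum_{k<m}|e_k(\bx)|^2$. Under the stated hypotheses (separable $H(K)$, $\sigma$-finite $\varrho_D$, finite trace) one only has $\sum_k|e_k(\bx)|^2\le K(\bx,\bx)$ pointwise and equality $\varrho_D$-almost everywhere, not everywhere. At a point $\bx_0$ with $K(\bx_0,\bx_0)>0$ but $\sum_k|e_k(\bx_0)|^2=0$ (which the assumptions do not exclude) your $w(\bx_0)=0$ while the reweighted kernel satisfies $\widetilde K(\bx_0,\bx_0)=K(\bx_0,\bx_0)/w(\bx_0)=\infty$. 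The hypothesis $\sup_{\bx}\widetilde K(\bx,\bx)<\infty$ of Theorem~\ref{thm:intro} then fails, and the reduction breaks down. Your closing remark about ``points where $w$ vanishes'' touches the symptom but not the cure: one must enlarge the tail part of the density to $K(\bx,\bx)-\sum_{k<m}|e_k(\bx)|^2$, which is pointwise at least $\sum_{k\ge m}|e_k(\bx)|^2$ (so your bound on $\widetilde T(m)$ survives) and simultaneously forces $K(\bx,\bx)\le c_m\,\varrho_m(\bx)$ for an explicit $c_m$, restoring boundedness of $\widetilde K$. This modification is precisely the technical refinement over the Krieg--Ullrich density that the paper singles out; with it, your argument goes through and yields the constant $3+16C_{\mathrm R}^2+4\sqrt{2}C_{\mathrm R}<50$.
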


By the same reasoning as above we may replace the error bound in \eqref{eq:intro_prob_general} by \eqref{1.3} but this time with a universal (and precisely determined) constant $C>0$. The result refines the bound in \cite{KrUl19} as we give precise constants here in the general situation. 

A further application of our least squares method is in the field of numerical integration. Oettershagen \cite{Oe17}, Belhadji, Bardenet, Chainais \cite{BeBaCh19}, Groechenig \cite{Gr19}, Migliorati, Nobile \cite{MiNo18}, and many others used least squares optimization to design quadrature rules. This results in (complex-valued) weights $\mathbf{q}:=(q_1,...,q_n)^\top$ in a cubature formula, i.e.,
$$
    \widetilde{Q}_{\bX}^mf = \mathbf{q}^\top\cdot \mathbf{f} = \sum\limits_{j = 1}^n q_j\, f(\bx^j) := \int_D \widetilde{S}^ m_{\bX}f \, \mathrm{d}\mu_D\,,
$$
where
$\mathbf{f}:=(f(\bx^j))_{j=1}^n$ and 
$\mu_D$ is the measure for which we want to compute the integral. In our setting, the integration nodes $\bX = \{\bx^1,...,\bx^n\}$ are determined once in advance for the whole class. Clearly, the bounds from Theorem \ref{thm:intro}, \ref{thm:intro3} can be literally transferred to control the worst-case integration error 
$$
	\sup\limits_{\|f\|_{H(K)}\leq 1} \Big|\int_D f(\bx)\,d\bx - \widetilde{Q}^m_{\bX}f\Big|\,,
$$
see Theorems \ref{thm:int_error}, \ref{thm:int_error2}.  

As the main example we consider the recovery of functions from Sobolev spaces with mixed smoothness (also known as tensor product Sobolev spaces or hyperbolic cross spaces). This problem has been investigated by many authors in the last 30 years, see \cite{DuTeUl19} and the references therein. The above general bound on the worst case errors can for instance be used for any non-periodic embedding
$$
    \Id:H^s_{\text{mix}}([0,1]^d) \to L_2([0,1]^d)\,,\quad s>1/2\,.
$$
The spaces $H^s_{\text{mix}}([0,1]^d)$ can be represented in various ways as a reproducing kernel Hilbert space satisfying the requirements of the above theorems, see the concrete collection of examples in \cite[Section 7.4]{BeTh04}. Applying Theorem \ref{thm:intro3} and plugging in well-known upper bounds on the singular numbers we improve on the asymptotic sampling bounds in \cite[Sect.\ 5]{DuTeUl19}, see also Dinh D\~ung~\cite{Du11} and Byrenheid \cite{By18} for the non-periodic situation. In addition, using refined preasymptotic estimates for the $(\sigma_j)_{j\in\N}$ in the non-periodic situation (see \cite[Section 4.3]{Kr18}) yields reasonable bounds for sampling numbers in case of small $n$. 

Let us emphasize that the result by Krieg, Ullrich \cite{KrUl19} can be considered as a major progress for the research on the complexity of this problem. They disproved Conjecture 5.6.2. in \cite{DuTeUl19} for $p=2$ and $1/2 < s <(d-1)/2$. Indeed, the celebrated sparse grid points are outperformed in a certain range for $s$. This represents a first step towards the solution of \cite[Outstanding Open Problem 1.4]{DuTeUl19}. As a consequence of the recent contributions by Nagel, Sch\"afer, T. Ullrich \cite{NSU20} and Temlyakov \cite{Te20_2} based on the groundbreaking solution of the Kadison Singer problem by Marcus, Spielman and Srivastava \cite{MaSpSr15} it is now evident that sparse grid methods are not optimal in the full range of parameters (except maybe in $d=2)$. Still it is worth mentioning that the sparse grids represent the best known deterministic construction what concerns the asymptotic order. Indeed, the guarantees are deterministic and only slightly worse compared to random nodes in the asymptotic regime. However, regarding preasymptotics the random constructions provide substantial advantages. 

In this paper, we use the simple least squares algorithm from~\cite{CoDaLe19, CoMi17, Bo17, KrUl19}, and we show that using random points makes it also possible to obtain explicit worst-case recovery bounds also for small $n$. In the periodic setting the analysis benefits from the fact that the underlying eigenvector system is a bounded orthonormal system (BOS), see \eqref{f20}, which implies in particular $N(m) \in \mathcal{O}(m)$. In case of the complex Fourier system we have a BOS constant $B=1$ and obtain dimension-free constants. This allows for a priori estimates on the number of required samples and arithmetic operations in order to ensure accuracy $\varepsilon>0$ with our concrete algorithm. In particular, we incorporate recent preasymptotic bounds for the singular values $(\sigma_k)_{k\in\N}$, see \cite{KuSiUl15, KSU3} and \cite{Kr18}. We obtain with probability larger than $1-3\delta$ for the periodic mixed smoothness space $H^s_{\text{mix}}(\T^d)$ with $2s>1+\log_2 d$ equipped with the $\|\cdot\|_{\#}$-norm (see \eqref{equ:weight_pound} below) the worst-case bound
$$
\sup\limits_{\|f\|_{\#}\leq 1}\|f-S^{m,\#}_\bX f\|^2_{L_2(\T^d)}
  \leq \frac{10s}{\delta(2s-1-\log_2d)}\left(\frac{16}{3m}\right)^{\frac{2s}{1+\log_2d}}\,.
$$
The number of samples $n$ scales similarly as after \eqref{1.3} with precisely determined absolute constants $c_1,c_2 < 70$, see Corollary \ref{cor:hsmix_preasymp} below. 

We also demonstrate in Section \ref{HWR} that the BOS assumption is not necessary for getting a practical algorithm. Similar as in \cite{Bo17} we use an algorithm called ``Hyperbolic Wavelet Regression'' and show that it recovers non-periodic functions belonging to a Sobolev space with mixed smoothness $H^s_{\text{mix}}$ from the $n$ nodes $\bX$ drawn according to the uniform measure with a rate similar as for the periodic case. The proposed approach achieves rates which improve on the bounds in \cite{Bo17} and are only worse by $\log^s n$ in comparison to the optimal rates achieved by hyperbolic wavelet best approximation studied in \cite{DeVKoTe98} and \cite{SU09}.

Finally, by our refined analysis we were able to settle a conjecture in \cite{Oe17} that the worst-case integration error for $H^s_{\text{mix}}(\T^d)$ is bounded in order by $n^{-s}\log(n)^{ds}$ with high probability. This conjecture was based on the outcome of several numerical experiments (described in~\cite{Oe17}) where the worst-case error has been simulated using the RKHS Riesz representer. It is remarkable in two respects. First, it is possible to benefit from higher order smoothness although we use plain random points. And second, this simple method can compete with most of the quasi-Monte Carlo methods based on lattices and digital nets studied in the literature, see \cite[pp.\ 195, 247]{DiKuSl13}. Moreover, if $s<(d-1)/2$ we get a better asymptotic rate than sparse grid integration which is shown to be of order $n^{-s}\log(n)^{(d-1)(s+1/2)}$, see \cite{DuUl15}.  

We practically verify our theoretical findings with several numerical experiments. There, we compare the recovery error for the least squares regression method $S_{\bX}^m$ to the optimal error given by the projection on the eigenvector space. We also study a non-periodic regime, where we randomly sample points according to the Chebyshev measure. Algorithmically, the coefficients $\mathbf{c}:=(c_k)_{k=1}^{m-1}\in\C^{m-1}$ of the approximation $S_{\bX}^m:=\sum_{k=1}^{m-1} c_k\,\sigma_k^{-1}\,e_{k}$ can be obtained by computing the least squares solution of the (over-determined) linear system of equations
$\bL_m \, \mathbf{c}=(f(\bx^j))_{j=1}^n$, where $\bL_m := \left(\sigma_k^{-1}\,e_{k}(\bx^j)\right)_{j=1;\,k=1}^{n;\,m-1}\in\C^{n\times(m-1)}$.
In order to solve this linear system of equations, one can apply a standard conjugate gradient type iterative algorithm, e.g., LSQR \cite{PaSa82}. The corresponding arithmetic costs are bounded from above by $C\,R\,m\,n < C\, R\,n^2$, where $C>0$ is an absolute constant and
$R$ the number of iterations which is rather small due to the well-conditioned least squares matrices. 

\paragraph{Outline.} In Section \ref{setting} we describe the setting in which we want to perform the worst-case analysis. There we use the framework of reproducing kernel Hilbert spaces of complex-valued functions. Section \ref{sect_lsqr} is devoted to the least squares algorithm, where the worst-case analysis is given in Sections \ref{worst_case_analysis}, \ref{mixed1}, and \ref{HWR}. In the first place, we present the general results in Section~\ref{worst_case_analysis}.
Section~\ref{mixed1} considers the particular case of hyperbolic Fourier regression. In Section~\ref{HWR}
we investigate the particular case of non-periodic functions with a bounded mixed derivative and their recovery via hyperbolic wavelet regression. The main tools from probability theory, like concentration inequalities for spectral norms and Rudelson's lemma, are provided in Section~\ref{sect_prob}. The analysis of the recovery of individual functions (Monte Carlo) is given in Section \ref{sect_individual}. Consequences for optimally weighted numerical integration based on plain random points are given in Section \ref{Numint} and Section 8.2. Finally, the numerical experiments are shown and discussed in Section~\ref{sec:num}.

\paragraph{Notation.} As usual $\N$ denotes the natural numbers, $\N_0:=\N\cup\{0\}$, $\Z$ denotes the integers,
$\R$ the real numbers, and $\C$ the complex numbers. If not indicated otherwise the symbol $\log$ denotes the natural logarithm. $\C^n$ denotes the complex $n$-space, whereas $\C^{m\times n}$ denotes the set of all $m\times n$-matrices $\bL$ with complex entries. The spectral norm (i.e. the largest singular value) of matrices $\bL$ is denoted by $\|\bL\|$ or $\|\bL\|_{2\to 2}$. Vectors and matrices are usually typesetted bold with $\bx,\by\in \R^n$ or $\C^n$. For $0<p\leq \infty$ and $\bx\in \C^n$ we denote $\|\bx\|_p := (\sum_{i=1}^n
|x_i|^p)^{1/p}$ with the usual modification in the case $p=\infty$. If $T:X\to Y$ is a continuous operator we write
$\|T\colon X\to Y\|$ for its operator (quasi-)norm. For two sequences $(a_n)_{n=1}^{\infty},(b_n)_{n=1}^{\infty}\subset \R$  we write $a_n \lesssim b_n$ if there exists a constant $c>0$ such that $a_n \leq
c\,b_n$ for all $n$. We will write $a_n \asymp b_n$ if $a_n \lesssim b_n$ and
$b_n \lesssim a_n$. We write $f \in \mathcal{O}(g)$ for non-negative functions $f,g$ if there is a constant $c>0$ such that $f \leq cg$. $D$ denotes a subset of $\R^d$ and $\ell_{\infty}(D)$ the set of bounded functions on $D$ with $\|\cdot\|_{\ell_{\infty}(D)}$ the supremum norm.
\par

\section{Reproducing kernel Hilbert spaces}
\label{setting}
We will work in the framework of reproducing kernel Hilbert spaces. The relevant theoretical background can be found in \cite[Chapt.\ 1]{BeTh04} and \cite[Chapt.\ 4]{StChr08}. Let $L_2(D,\varrho_D)$ be the space of complex-valued square-integrable functions with respect to~$\varrho_D$. Here $D \subset \R^d$ is an arbitrary subset and $\varrho_D$ a measure on $D$. 
We further consider a reproducing kernel Hilbert space $H(K)$ with a Hermitian positive definite kernel $K(\bx,\by)$ on $D \times D$. The crucial property is the identity  
\begin{equation}
		f(\bx) = \langle f, K(\cdot,\bx) \rangle_{H(K)} \label{eq:eval_inner_product}
\end{equation}
for all $\bx \in D$. It ensures that point evaluations are continuous functionals on $H(K)$. We will use the notation from \cite[Chapt.\ 4]{StChr08}. In the framework of this paper, the finite trace of the kernel 
\begin{equation}\label{integrab}
	\int_{D} K(\bx,\bx) \, \varrho_D(\mathrm{d}\bx) < \infty
\end{equation}
or its boundedness  
\begin{equation} \label{CK000}
		\|K\|_{\infty} := \sup\limits_{\bx \in D} \sqrt{K(\bx,\bx)} < \infty
\end{equation}
is assumed. The boundedness of $K$ implies that $H(K)$ is continuously embedded into $\ell_\infty(D)$, i.e., 
\begin{equation*}
		\|f\|_{\ell_{\infty}(D)} \leq  \|K\|_{\infty}\cdot\|f\|_{H(K)}\,.
\end{equation*}
Note that we do not need the measure $\varrho_D$ for this embedding. 

The embedding operator 
$$\Id:H(K) \to L_2(D,\varrho_D)$$ 
is compact under the integrability condition \eqref{integrab}, which we always assume from now on. We additionally assume that $H(K)$ is at least infinite dimensional. However, we do not assume the separability of $H(K)$ here. Due to the compactness of $\Id$ the operator $\Id^{\ast}\circ \Id$ provides an at most countable system of strictly positive eigenvalues $(\lambda_j)_{j\in \N}$. These eigenvalues are summable as a consequence of \eqref{integrab} and \eqref{K(x,x)}, \eqref{C_K} below, such that the singular numbers $(\sigma_j)_{j\in \N}$ belong to $\ell_2$. Indeed, let $\Id^\ast$ be defined in the usual way as 
$$
	\langle \Id(f), g\rangle_{L_2} = \langle f, \Id^{\ast}(g) \rangle_{H(K)} \,.
$$
Then $W_{\varrho_D} := \Id^\ast\circ \Id : H(K) \to H(K)$ is non-negative definite, self-adjoint and compact. Let $(\lambda_j,e_j)_{j\in \N}$ denote the eigenpairs of $W_{\varrho_D}$, where $(e_j)_{j \in \N} \subset H(K)$ is an orthonormal system of eigenvectors, and $(\lambda_j)_{j \in \N}$ the corresponding positive eigenvalues. In fact, $W_{\varrho_D}e_j = \lambda_je_j$ and $\langle e_j, e_k \rangle_{H(K)} = \delta_{j,k}$\,. The sequence of positive eigenvalues are arranged in non-increasing order, i.e.,
$$
		\lambda_1 \geq \lambda_2 \geq \lambda_3 \geq \cdots > 0\,.
$$
Note that we have by Bessel's inequality 
\begin{equation}\label{eig_vectors} 
	\|f\|_{H(K)}^2 \geq \sum\limits_{k=1}^{\infty} |\langle f,e_k\rangle_{H(K)} |^2\,.
\end{equation}
Let us point out, that by \eqref{eig_vectors}, the function
\begin{equation} \label{K(x,x)}
	\bx \mapsto \sum\limits_{k=1}^{\infty}|e_k(\bx)|^2 \leq K(\bx,\bx)
\end{equation}
exists pointwise in $\C$. This implies 
\begin{equation}\label{C_K}
		\sum\limits_{k=1}^{\infty} \lambda_k \leq \int_D K(\bx,\bx) \, \varrho_D(\mathrm{d}\bx) 
\end{equation}
and we get by $\lambda_j = \sigma_j^2$ that $\Id:H(K) \to L_2(D,\varrho_D)$ is a Hilbert-Schmidt operator if $\eqref{integrab}$ holds. We will restrict to the situation where we have equality in \eqref{C_K}. This can be achieved by posing additional assumptions, namely that $H(K)$ is separable and $\varrho_D$ is $\sigma$-finite, see \cite[Thm.\ 4.27]{StChr08}. 
It further holds that 
$$
	\langle e_j, e_k \rangle_{L_2} = \langle \Id(e_j), \Id(e_k) \rangle_{L_2} = \langle We_j,e_k \rangle_{H(K)} = \lambda_j 
	\langle e_j,e_k\rangle_{H(K)} = \lambda_j\delta_{j,k}\,.
$$
Hence, $(e_j)_{j\in \N}$ is also orthogonal in $L_2(D,\varrho_D)$ and $\|e_j\|_2 = \sqrt{\lambda_j} =: \sigma_j$. We define the orthonormal system $(\eta_j)_{j\in \N} := (\lambda_j^{-1/2}e_j)_{j\in \N}$ in $L_2(D,\varrho_D)$\,. 

For our subsequent analysis the quantity
\begin{equation}\label{f1b}
	N(m) := \sup\limits_{\bx \in D}\sum\limits_{k=1}^{m-1}|\eta_k(\bx)|^2\
\end{equation}
plays a fundamental role. We often need the related quantity $T(m) := \sup_{\bx \in D}\sum_{k=m}^{\infty}|e_k(\bx)|^2$ which can be estimated by $T(m) \leq 2\sum_{k\geq m/2}^{\infty} \sigma_k^2 N(4k)/k$. The first one is sometimes called ``spectral function'', see \cite{Gr19} and the references therein. Clearly, by \eqref{C_K} $N(m)$ and $T(m)$ are well-defined if the kernel is bounded, i.e., if \eqref{CK000} is assumed. In fact, $T(m)$ is bounded by $\|K\|_{\infty}$. It may happen that the system $(\eta_k)_{k\in\N}$ is a uniformly $\ell_\infty(D)$ bounded orthonormal system (BOS), i.e., we have for all $k\in \N$ 
\begin{equation}
\label{f20}
\|\eta_k\|_{\ell_{\infty}(D)} \leq B\,.
\end{equation}
Let us call $B$ the BOS constant of the system. In this case we have $N(m) \leq (m-1)B^2 \in \mathcal{O}(m)$ and $T(m) \leq B^2\sum_{k=m}^{\infty}\lambda_k$\,.

\begin{remark}\label{injective} We would like to point out an issue concerning the embedding operator $\Id:H(K) \to L_2(D,\varrho_D)$ defined above. As discussed in \cite[p.\ 127]{StChr08} this embedding operator is in general {\em not} injective as it maps a function to an equivalence class. As a consequence, the system of eigenvectors $(e_j)_{j\in \N}$ may not be a basis in $H(K)$ (note that $H(K)$ may not even be separable). However, there are conditions which ensure that the orthonormal system $(e_j)_{j\in \N}$ is an orthonormal basis in $H(K)$, see \cite[4.5]{StChr08} and \cite[Ex.\ 4.6, p.\ 163]{StChr08}, which is related to Mercer's theorem \cite[Thm.\ 4.49]{StChr08}. Indeed, if we additionally assume that the kernel $K(\cdot,\cdot)$ is bounded and continuous on $D \times D$ (for a domain $D \subset \R^d$), then $H(K)$ is separable and consists of continuous functions, see \cite[Thms.\ 16, 17]{BeTh04}. If we finally assume that the measure $\varrho_D$ is a Borel measure with full support then $(e_j)_{j\in \N}$ is a complete orthonormal system in $H(K)$. In this case we have the pointwise identity  
\begin{equation}\label{ptwise} %
	K(\bx,\by)=\sum\limits_{j=1}^{\infty} \overline{e_j(\by)}\,e_j(\bx)\,,\quad \bx,\by \in D\,,
\end{equation}
as well as equality signs in \eqref{eig_vectors}, \eqref{K(x,x)} and \eqref{C_K}, see for instance \cite[Cor.\ 4]{BeTh04}. Let us emphasize, that a Mercer kernel $K(\cdot,\cdot)$, which is a continuous kernel on $D\times D$ with a compact domain $D \subset \R^d$ satisfies all these conditions, see \cite[Thm.\ 4.49]{StChr08}. In this case, we even have \eqref{ptwise} with absolute and uniform convergence on $D \times D$. Let us point out that, to our surprise, Theorem~\ref{thm:intro3} (Theorem~\ref{sampling_numbers}) holds already true under the finite trace condition \eqref{integrab} if $H(K)$ is separable and $\varrho_D$ is $\sigma$-finite. We do not have to assume continuity of the kernel. Note that the finite trace condition is natural in this context as \cite{HiNoVy08} shows. 
\end{remark}

\section{Least squares regression}
\label{sect_lsqr}
Our algorithm essentially boils down to the solution of an over-determined system
\begin{equation*} %
		\bL\, \mathbf{c} = \mathbf{f}
\end{equation*}
where $\bL \in \C^{n \times m}$ is a matrix with $n>m$. It is well-known that the above system may not have a solution. However, we can ask for the vector $\mathbf{c}$ which minimizes the residual $\|\mathbf{f}-\bL\, \mathbf{c}\|_2$. Multiplying the system with $\bL^\ast$ gives 
$$
	\bL^{\ast}\,\bL\, \mathbf{c} = \bL^{\ast}\, \mathbf{f}
$$
which is called the system of normal equations. If $\bL$ has full rank then the unique solution of the least squares problem is given by
$$
	\mathbf{c} = (\bL^{\ast}\,\bL)^{-1}\,\bL^{\ast}\, \mathbf{f}\,.
$$
From the fact that the singular values of $\bL$ are bounded away from zero we get the following quantitative bound on the spectral norm of the Moore-Penrose inverse $(\bL^{\ast}\,\bL)^{-1}\,\bL^{\ast}$. 

\begin{proposition}\label{prop2} Let $\bL\in \C^{n\times m}$ be a matrix with $m<n$ with full rank and singular values  $\tau_1,...,\tau_m  >0$ arranged in non-increasing order. 
\begin{description}
 \item[(i)] Then also the matrix $(\bL^{\ast}\,\bL)^{-1}\,\bL^{\ast}$ has full rank 
and singular values $\tau_m^{-1},...,\tau_1^{-1}$ (arranged in non-increasing order). 

\item[(ii)] In particular, it holds that 
$$
   (\bL^{\ast}\,\bL)^{-1}\,\bL^{\ast} = \bV^{\ast}\, \tilde{\mathbf{\Sigma}}\bU
$$
whenever $\bL = \bU^{\ast} \mathbf{\Sigma}  \bV$, where $\mathbf{\Sigma} \in \re^{n\times m}$ is a rectangular matrix only with $(\tau_1,...,\tau_m)$ on the ``main diagonal'' and orthogonal matrices $\bU \in \C^{n\times n}$ and $\bV \in \C^{m\times m}$. Here $\tilde{\mathbf{\Sigma}} \in \re^{m\times n}$ denotes the matrix with $(\tau_1^{-1},...,\tau_m^{-1})$ on the ``main diagonal''\,.

\item[(iii)] The operator norm $\|(\bL^{\ast}\,\bL)^{-1}\,\bL^{\ast}\|$ can be controlled as follows 
$$
	\|(\bL^{\ast}\,\bL)^{-1}\,\bL^{\ast}\| \leq \tau_m^{-1}\,.
$$
\end{description}
\end{proposition}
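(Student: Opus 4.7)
The proposition is really a calculation with the singular value decomposition, so my plan is to prove (ii) first by an explicit computation and then read off (i) and (iii) as immediate consequences.

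First I would fix an SVD $\bL = \bU^{\ast}\mathbf{\Sigma}\bV$ with unitary matrices $\bU\in\C^{n\times n}$, $\bV\in\C^{m\times m}$ and the rectangular diagonal matrix $\mathbf{\Sigma}\in\R^{n\times m}$ carrying $(\tau_1,\ldots,\tau_m)$ on its main diagonal; the full-rank assumption on $\bL$ guarantees $\tau_1\geq\cdots\geq\tau_m>0$. Then I would compute
\[
\bL^{\ast}\bL \;=\; \bV^{\ast}\mathbf{\Sigma}^{\ast}\bU\bU^{\ast}\mathbf{\Sigma}\bV \;=\; \bV^{\ast}(\mathbf{\Sigma}^{\ast}\mathbf{\Sigma})\bV,
\]
where $\mathbf{\Sigma}^{\ast}\mathbf{\Sigma}\in\R^{m\times m}$ is diagonal with entries $\tau_1^2,\ldots,\tau_m^2>0$ and hence invertible with inverse the diagonal matrix of entries $\tau_k^{-2}$. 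Since $\bV$ is unitary, $(\bL^{\ast}\bL)^{-1}=\bV^{\ast}(\mathbf{\Sigma}^{\ast}\mathbf{\Sigma})^{-1}\bV$, and consequently
\[
(\bL^{\ast}\bL)^{-1}\bL^{\ast} \;=\; \bV^{\ast}(\mathbf{\Sigma}^{\ast}\mathbf{\Sigma})^{-1}\bV\bV^{\ast}\mathbf{\Sigma}^{\ast}\bU \;=\; \bV^{\ast}\bigl[(\mathbf{\Sigma}^{\ast}\mathbf{\Sigma})^{-1}\mathbf{\Sigma}^{\ast}\bigr]\bU.
\]
The bracketed product is a straightforward $m\times n$ diagonal calculation: on the main diagonal one obtains $\tau_k^{-2}\cdot\tau_k=\tau_k^{-1}$, all other entries are zero. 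This matrix is exactly $\tilde{\mathbf{\Sigma}}$ from the statement, which proves (ii).

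From (ii) the remaining items are immediate. For (i), the identity $(\bL^{\ast}\bL)^{-1}\bL^{\ast}=\bV^{\ast}\tilde{\mathbf{\Sigma}}\bU$ is already a singular value decomposition of the Moore--Penrose inverse, up to reversing the order of the diagonal entries of $\tilde{\mathbf{\Sigma}}$; conjugating by the appropriate $m\times m$ and $n\times n$ permutation matrices (which are unitary and can be absorbed into $\bV^{\ast}$ and $\bU$) yields an SVD whose non-increasingly ordered singular values are precisely $\tau_m^{-1},\ldots,\tau_1^{-1}$, and in particular all are strictly positive, so the matrix has full rank $m$. For (iii), the spectral norm equals the largest singular value, which by (i) is $\tau_m^{-1}$; this gives $\|(\bL^{\ast}\bL)^{-1}\bL^{\ast}\|=\tau_m^{-1}$, so in particular $\leq \tau_m^{-1}$.

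There is no real obstacle here beyond bookkeeping: one just has to keep the rectangular dimensions of $\mathbf{\Sigma}$ and $\tilde{\mathbf{\Sigma}}$ straight and remember that the left and right unitary factors switch roles when passing from $\bL$ to its pseudoinverse. Everything else is a one-line SVD manipulation.
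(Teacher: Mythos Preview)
Your argument is correct; the SVD computation you give is exactly the standard way to verify this, and the paper in fact states Proposition~\ref{prop2} without proof, treating it as a routine linear algebra fact. There is nothing to add.
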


For function recovery we will use the following matrix 
\begin{equation}\label{f0}
	\bL_m := \bL_m(\bX) = \left(\begin{array}{cccc}
			\eta_1(\bx^1) &\eta_2(\bx^1)& \cdots & \eta_{m-1}(\bx^1)\\
			\vdots & \vdots && \vdots\\
			\eta_1(\bx^n) &\eta_2(\bx^n)& \cdots & \eta_{m-1}(\bx^n)
	\end{array}\right)\,,
\end{equation}
for $\bX = \{\bx^1,...,\bx^n\}\subset D$ of distinct sampling nodes and the system $(\eta_k)_{k\in\N} := (\lambda_k^{-1/2}e_k)_{k\in\N}$. Below we will see that this matrix behaves well with high probability
if $n$ is large enough and the nodes in $\bX$ are chosen independently and identically $\varrho_D$-distributed from $D$.

\begin{algorithm}[H]
\caption{Least squares regression.}\label{algo1}
  \begin{tabular}{p{1.2cm}p{5.0cm}p{8.1cm}}
    Input: & $\bX = \{\bx^1,...,\bx^n\}\subset D$ \hfill & set of distinct sampling nodes, \\
      & $\mathbf{f} = (f(\bx^1),...,f(\bx^n))^\top$ \hfill & samples of $f$ evaluated at the nodes from $\bX$, \\
      & $m\in\N$ & $m < n$ such that the matrix $\bL_m := \bL_m(\bX)$ from~\eqref{f0} has full (column) rank.
  \end{tabular}
  \begin{algorithmic}
  \STATE
  Solve the over-determined linear system 
  $$
	\bL_m \, (c_1,...,c_{m-1})^\top = \mathbf{f}\,
  $$
  via least squares (e.g.\ directly or via the LSQR algorithm \cite{PaSa82}), i.e., compute
  $$
	 (c_1,...,c_{m-1})^\top := (\bL_m^{\ast}\,\bL_m)^{-1}\,\bL_m^{\ast}\, \mathbf{f}\,.
  $$
  \end{algorithmic}
   Output:  $\mathbf{c} = (c_1,...,c_{m-1})^\top\in \C^{m-1}$ coefficients of the approximant $S^m_{\bX}f:=\sum_{k = 1}^{m-1} c_k \, \eta_k$.
\end{algorithm}

Using Algorithm~\ref{algo1}, we compute the coefficients $c_k$, $k=1,\ldots,m-1$, of the approximant
\begin{equation}
S^m_{\bX}f:=\sum_{k = 1}^{m-1} c_k\, \eta_k\,.\label{eq:def_SmX}
\end{equation}
Note that the mapping $f \mapsto S^m_{\bX}f$ is linear for a fixed set of sampling nodes $\bX\subset D.$

\section{Concentration inequalities}
\label{sect_prob}
We will consider complex-valued random variables $X$ and random vectors $(X_1,...,X_N)$ on a probability space $(\Omega,\mathcal{A},\Prob)$. As usual we will denote with $\Ept X$ the expectation of $X$. With $\Prob(A|B)$ and $\Ept(X|B)$ we denote the conditional probability 
$$
		\Prob(A|B) := \frac{\Prob(A\cap B)}{\Prob(B)}
$$
and the conditional expectation
\begin{equation}\label{eq:expectation_cond}
	\Ept(X|B) = \frac{\Ept(\chi_B\cdot X)}{\Prob(B)}\,,
\end{equation}
where $\chi_B:\Omega\to\{0,1\}$ is the indicator function on $B$.

Let us start with the classical Markov inequality. If $Z$ is a random variable then 
\begin{equation*} %
	\Prob(|Z|>t) \leq \frac{\Ept|Z|}{t}\,,\quad t>0\,.
\end{equation*}
Of course, there is also a version involving conditional probability and expectation. In fact, 
\begin{equation}
	\Prob(|Z|>t~|~B) \leq \frac{\Ept(|Z|~|~B)}{t}\,,\quad t>0\,.
	\label{eq:markov_cond}
\end{equation}
Let us state concentration inequalities for the norm of sums of complex rank-$1$ matrices. For the first result we refer to Oliveira \cite{Ol10}. We will need the following notational convention: For a complex 
(column) vector $\by\in \C^N$ (or $\ell_2$) we will often use the tensor notation for the matrix
$$
	\by \otimes \by := \by\, \by^\ast = \by \, \overline{\by}^\top\in\C^{N \times N}\;\textnormal{(or $\C^{\N\times\N}$)}\,.
$$

\begin{proposition}\label{Oliv} Let $\by^i, i=1,...,n$, be i.i.d.\ copies of a random vector $\by \in \C^{N}$ such that $\|\by^i\|_2 \leq M$ almost surely. Let further $\Ept(\by^i \otimes \by^i) = \mathbf{\Lambda}\in\C^{N\times N}$ and $0<t <1$. Then it holds
$$
	\Prob\left(\Bigg\|\frac{1}{n}\sum\limits_{i=1}^n\by^i \otimes \by^i-\mathbf{\Lambda}\Bigg\|>t\right) \leq (2\min(n,N))^{\sqrt{2}}\exp\left(-\frac{nt^2}{12M^2}\right).
$$
\end{proposition}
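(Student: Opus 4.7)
The plan is to follow the matrix Laplace transform method (in the style of Ahlswede--Winter and its refinement by Tropp), adapting it to exploit the rank-one structure of the summands, since this is what produces $\min(n,N)$ in place of $N$ in the prefactor.

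First, I would center the summands by setting $\bX_i := \by^i \otimes \by^i - \mathbf{\Lambda}$, so the $\bX_i$ are i.i.d.\ self-adjoint matrices with $\Ept \bX_i = \mathbf{0}$ and $\|\bX_i\| \leq M^2 + \|\mathbf{\Lambda}\| \leq 2M^2$ almost surely. Writing $S_n := \sum_{i=1}^n \bX_i$, the task becomes bounding $\Prob(\|S_n\| > nt)$. Since $\|S_n\| = \max\{\lambda_{\max}(S_n),\lambda_{\max}(-S_n)\}$, a union bound reduces the estimate to $\Prob(\lambda_{\max}(\pm S_n) > nt)$. Then, using $\exp(\theta \lambda_{\max}(S_n)) = \lambda_{\max}(\exp(\theta S_n)) \leq \Tr \exp(\theta S_n)$ together with Markov's inequality, one obtains
$$
\Prob(\lambda_{\max}(S_n) > nt) \leq e^{-\theta nt}\,\Ept \Tr \exp(\theta S_n).
$$
I would then invoke Lieb's concavity theorem (Tropp's master inequality) to separate the independent summands, giving $\Ept \Tr \exp(\theta S_n) \leq \Tr \exp\bigl(n \log \Ept e^{\theta \bX_1}\bigr)$, and control $\log \Ept e^{\theta \bX_1}$ by a scalar Bernstein-type estimate using $\Ept \bX_1 = \mathbf{0}$ and $\|\bX_1\|\leq 2M^2$, which yields an inequality of the form $\log \Ept e^{\theta \bX_1} \preceq \theta^2 \Ept \bX_1^2 \cdot \varphi(\theta)$ for an explicit scalar function $\varphi$.

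The crucial refinement, which is what distinguishes Oliveira's bound from naively applying matrix Bernstein, is the dimension reduction from $N$ to $\min(n,N)$. The point is that each $\by^i \otimes \by^i$ has rank one, hence $\sum_{i=1}^n \by^i \otimes \by^i$ has rank at most $n$; equivalently, this matrix has the same non-zero spectrum as the Gram matrix $\bigl(\langle \by^i,\by^j\rangle\bigr)_{i,j=1}^n \in \C^{n\times n}$. Replacing $S_n$ inside the trace by its restriction to the span of $\{\by^i\}_{i=1}^n$ (or by working on the Gram side) replaces the ambient dimension by $\min(n,N)$ in the Laplace bound. The slightly unusual exponent $\sqrt{2}$ then emerges from a tail-integration step that converts the moment estimate resulting from this projection into the final exponential bound; finally, optimizing over $\theta>0$, balancing $e^{-\theta nt}$ against the variance contribution, yields the exponent $nt^2/(12M^2)$, with the constant $12$ being the standard artifact of a centered-matrix Bernstein estimate with the uniform bound $2M^2$.

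The main obstacle will be the dimension reduction producing $\min(n,N)$ together with the precise exponent $\sqrt{2}$: both are specific to Oliveira's analysis and do not come out of a black-box application of the matrix Chernoff bound. Since the result is explicitly attributed to \cite{Ol10}, in practice I would cite that paper for the exact constants rather than reproduce the delicate truncation and tail-integration arguments there.
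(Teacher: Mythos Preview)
Your outline is broadly in the right family (matrix Laplace transform applied to rank-one summands), but it diverges from what the paper actually does, and your explanation of where the specific constants $\sqrt{2}$ and $12$ come from is not accurate.

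The paper's proof is far more direct: it simply quotes an intermediate inequality from Oliveira \cite[Lem.~1]{Ol10}, namely
\[
\Prob\left(\Bigg\|\frac{1}{n}\sum_{i=1}^n\by^i \otimes \by^i-\mathbf{\Lambda}\Bigg\|>t\right)
\le(2\min(n,N))^{\frac{1}{1-2M^2s/n}}\exp\!\left(-st+\frac{2M^2s^2}{n-2M^2s}\right)
\]
valid for $0\le s\le n/(2M^2)$, and then makes the specific choice $s=(4+2\sqrt{2})^{-1}nt/M^2$. With this $s$ and the restriction $0<t<1$, the prefactor exponent becomes $\frac{1}{1-t/(2+\sqrt{2})}\le\frac{2+\sqrt{2}}{1+\sqrt{2}}=\sqrt{2}$, and the exponential term becomes $\exp(-nt^2/((6+4\sqrt{2})M^2))$; the stated constant $12$ is then just the crude upper bound $6+4\sqrt{2}<12$. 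So the $\sqrt{2}$ is not the output of a ``tail-integration step'' and the $12$ is not a ``standard Bernstein artifact'' from the bound $\|\bX_i\|\le 2M^2$; both arise purely from this single parameter optimization in Oliveira's inequality, together with the hypothesis $t<1$.

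Your plan via Lieb's concavity theorem (Tropp's route) would also work in principle, but it is a genuinely different argument from Oliveira's, and you would have to redo the rank-reduction analysis carefully to land on exactly $(2\min(n,N))^{\sqrt{2}}$ rather than the more familiar $2N$ or $2\min(n,N)$ prefactor with exponent $1$. Since you conclude by saying you would cite \cite{Ol10} for the constants anyway, the cleaner path is the paper's: extract the parametrized bound from Oliveira and optimize in $s$.
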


\begin{proof}
In order to show the probability estimate, we refer to the proof of \cite[Lem.\ 1]{Ol10} and observe
$$
\Prob\left(\Bigg\|\frac{1}{n}\sum\limits_{i=1}^n\by^i \otimes \by^i-\mathbf{\Lambda}\Bigg\|>t\right)
\le(2\min(n,N))^{\frac{1}{1-2M^2s/n}}\exp\left(-st+\frac{2M^2s^2}{n-2M^2s}\right)
$$
for $ 0\le s\le n/(2M^2)$. Since we restrict $0<t<1$, the choice $s=(4+2\sqrt{2})^{-1}nt/M^2$
yields
\begin{align*}
(2\min(n,N))^{\frac{1}{1-2M^2s/n}}&\exp\left(-st+\frac{2M^2s^2}{n-2M^2s}\right)
=
(2\min(n,N))^{\sqrt{2}}\exp\left(-\frac{nt^2}{(6+4\sqrt{2})M^2}\right)
\end{align*}
and, finally, the assertion holds.
\end{proof}

\begin{remark} %
A slightly stronger version for the case of real matrices can be found in Cohen, Davenport, Leviatan \cite{CoDaLe13} (see also the correction in \cite{CoDaLe19}).
For $\by^i, i=1,...,n$, i.i.d.\ copies of a random vector $\by \in \R^{N}$ sampled from a bounded orthonormal system,
one obtains the concentration inequality
$$
	\Prob\left(\Bigg\|\frac{1}{n}\sum\limits_{i=1}^n\by^i \otimes \by^i-\mathbf{I}\Bigg\|>t\right) \leq 2N\exp(-c_tn/M^2)\,,
$$
where $c_t = (1+t)(\ln(1+t))-t$. This leads to improved constants for the case of real matrices. 

\end{remark}

The following result goes back to Lust-Piquard, Pisier \cite{LPPi91}, and Rudelson \cite{Ru99}. The complex version with precise constants can be found in Rauhut \cite[Cor.\ 6.20]{Ra10}.

\begin{proposition}\label{RudLem} Let $\by^i \in \C^N$ (or $\ell_2$), $i=1,...,n$, and $\eps_i$ independent Rademacher variables taking values $\pm 1$ with equal probability. Then
\begin{equation}\label{lem:Rud}
	\Ept_{\varepsilon} \Bigg\|\sum\limits_{i=1}^n \eps_i \, \by^i\otimes \by^i\Bigg\| \leq C_{\mathrm{R}}\sqrt{\log(8\min\{n, N\})}\cdot
	\sqrt{\Bigg\|\sum\limits_{i=1}^n \by^i\otimes \by^i\Bigg\|}\cdot \max\limits_{i=1,...,n}\|\by^i\|_2\,,
\end{equation}
with 
\begin{equation}\label{f31}
		C_{\mathrm{R}} = \sqrt{2}+\frac{1}{4\sqrt{2\log(8)}} \in [1.53,1.54]\,.
\end{equation}
\label{prop_f31}
\end{proposition}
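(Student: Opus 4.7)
The plan is to pass through the noncommutative Khintchine inequality in a Schatten $S_p$-norm and then optimize in $p$. Abbreviate $A_i := \by^i \otimes \by^i$, which is selfadjoint, rank one, and positive semidefinite. Since the Rademacher sum $\sum_i \eps_i A_i$ has rank at most $N' := \min\{n, N\}$, one may interchange spectral norm and $S_p$-norm at controlled cost via the two-sided estimate $\|X\| \le \|X\|_{S_p} \le (N')^{1/p}\|X\|$ valid for any matrix of rank at most $N'$. This reduces the task to estimating a single Schatten norm of the random matrix.

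The noncommutative selfadjoint Khintchine inequality of Lust-Piquard--Pisier, combined with Jensen's inequality, gives for even $p \ge 2$
\begin{equation*}
\Ept_\eps\Big\|\sum_i\eps_i A_i\Big\|_{S_p} \;\le\; \Big(\Ept_\eps\Big\|\sum_i\eps_i A_i\Big\|_{S_p}^{p}\Big)^{1/p} \;\le\; D_p\,\Big\|\Big(\sum_i A_i^{2}\Big)^{1/2}\Big\|_{S_p},
\end{equation*}
with a sharp constant $D_p$ of order $\sqrt{p/\mathrm{e}}$ (Buchholz/Haagerup). Because each $A_i$ is rank one, $A_i^{2} = \|\by^i\|_2^{2}\,A_i$, so $\sum_i A_i^{2} \preceq M^{2} B$ in the PSD order, where $M := \max_i \|\by^i\|_2$ and $B := \sum_i A_i$. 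Schatten norms are monotone on the PSD cone and satisfy $\|Y^{1/2}\|_{S_p}^{2} = \|Y\|_{S_{p/2}}$; hence
\begin{equation*}
\Big\|\Big(\sum_i A_i^{2}\Big)^{1/2}\Big\|_{S_p} \;\le\; M\,\|B\|_{S_{p/2}}^{1/2} \;\le\; M\,(N')^{1/p}\,\|B\|^{1/2},
\end{equation*}
where the last step again uses that $B$ has rank at most $N'$. Chaining the estimates yields
\begin{equation*}
\Ept_\eps\Big\|\sum_i \eps_i A_i\Big\| \;\le\; D_p\cdot(N')^{1/p}\cdot M\cdot \|B\|^{1/2},
\end{equation*}
which is structurally exactly the right-hand side of \eqref{lem:Rud}.

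It remains to minimize $p\mapsto D_p\cdot(N')^{1/p}$. A brief derivative computation shows that the optimum sits near $p^{\ast}\approx 2\log(8N')$; with $D_p \le \sqrt{p/\mathrm{e}}$ and $p = 2\log(8N')$ rounded to an even integer, the product becomes $\sqrt{\log(8N')}$ times a universal factor close to $\sqrt{2}$. Extracting the precise constant $C_{\mathrm{R}} = \sqrt{2} + 1/(4\sqrt{2\log 8})$ from \eqref{f31} is the main technical obstacle: it requires (i) the sharp form of $D_p$ with its explicit numerical constant, and (ii) a careful bookkeeping of both the rounding of $p^{\ast}$ to an even integer and the ``$\log 8$'' offset inherited from the choice of $p^{\ast}$. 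These two corrections together produce the small additive term $1/(4\sqrt{2\log 8})$ on top of the leading $\sqrt 2$. This detailed bookkeeping is carried out in Rauhut~\cite[Cor.\,6.20]{Ra10}; the sketch above is essentially a transcription of his argument. An approach that accepts a slightly worse constant (e.g., $\sqrt{2}\,\mathrm{e}^{1/2}$) avoids the delicate rounding step and collapses to a one-line optimization.
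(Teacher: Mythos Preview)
Your proposal is correct and follows essentially the same route as the paper: the paper does not give its own proof but simply refers to Rauhut \cite[Cor.~6.20]{Ra10}, and in Remark~\ref{reminfty} spells out that this argument is based on the noncommutative Khintchine inequality \cite[Sect.~6.5]{Ra10} together with \cite[Lem.~6.18]{Ra10} and a rank/Schatten-norm comparison---exactly the ingredients you describe. Your sketch, including the rank-one identity $A_i^2 = \|\by^i\|_2^2 A_i$, the $S_p$-to-operator-norm passage via the rank bound $N'=\min\{n,N\}$, and the optimization in $p$ leading to the $\sqrt{\log(8N')}$ factor, faithfully reproduces Rauhut's argument, and you correctly note that the precise value of $C_{\mathrm R}$ requires the careful constant tracking done there.
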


\begin{remark}\label{reminfty} The result is proved for complex (finite) matrices. Note that the factor $\sqrt{\log(8\min\{n,N\})}$ is already an upper bound for $\sqrt{\log(8r)}$, where $r$ is the rank of the matrix $\sum_i \by^i\otimes \by^i$. The proof of Proposition \ref{RudLem} with the precise constant is based on \cite[Lem.\ 6.18]{Ra10} which itself is based on a non-commutative Khintchine inequality, see \cite[6.5]{Ra10}. This technique allows for controlling all the involved constants. Let us comment on the situation $N=\infty$, i.e., $\by^j \in \ell_2$, where this inequality keeps valid with the factor $\sqrt{\log(8n)}$. In fact, if the matrices $\mathbf{B}_j$ in \cite[Thm.\ 6.14]{Ra10} are replaced by rank-$1$-operators $\mathbf{B}_j\colon\ell_2\to\ell_2$ of type  $\mathbf{B}_j = \by^j \otimes \by^j$ with $\|\by^j\|_2 < \infty$ then all the arguments keep valid and an $\ell_2$-version of this non-commutative Khintchine inequality is available. This implies an $\ell_2$-version of \cite[Lem.\ 6.18]{Ra10} which reads as follows: Let $\by^j \in \ell_2$, $j=1,...,n$, and $p\geq 2$. Then
$$
	\Bigg(\Ept_{\varepsilon} \Bigg\|\sum\limits_{i=1}^n \eps_i \, \by^i\otimes \by^i\Bigg\|^p\Bigg)^{1/p} \leq 2^{3/(4p)}n^{1/p}\sqrt{p} \, \mathrm{e}^{-1/2}\sqrt{\Bigg\|\sum\limits_{i=1}^n \by^i\otimes \by^i\Bigg\|}\cdot \max\limits_{i=1,...,n}\|\by^i\|_2\,.
$$ 
Since we control the moments of the random variable representing the norm on the left-hand side we are now able to derive a concentration inequality by standard arguments (\cite[Prop.~6.5]{Ra10}). This  concentration inequality then easily implies the $\ell_2$-version of \eqref{lem:Rud}.
\end{remark}

As a consequence of this result we obtain the following deviation inequality in the mean which will be sufficient for our purpose. 

\begin{corollary}\label{cor5} Let $\by^i$, $i=1,...,n$, be i.i.d.\ random vectors from $\C^N$ or $\ell_2$ with $\|\by^i\|_2 \leq M$ almost surely. Let further $\mathbf{\Lambda} = \Ept(\by^i \otimes \by^i)$. Then with $N>n$ we obtain
$$
	\Ept \Bigg\|\frac{1}{n}\sum\limits_{i=1}^n \by^i \otimes \by^i-\mathbf{\Lambda}\Bigg\| \leq 
	4 C_{\mathrm{R}}^2\frac{\log (8n)}{n}M^2+2C_{\mathrm{R}}\sqrt{\frac{\log (8n)}{n}}M\sqrt{\|\mathbf{\Lambda}\|}\,.
$$

\end{corollary}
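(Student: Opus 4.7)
The plan is to derive the bound by combining a standard symmetrization argument with Rudelson's lemma (Proposition~\ref{RudLem}) and then closing the resulting self-bounding inequality for the expected deviation.

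First, I would introduce independent copies $\tilde{\by}^i$ of $\by^i$ and independent Rademacher variables $\eps_i$. Writing $\mathbf{\Lambda}=\Ept[\tilde{\by}^i\otimes\tilde{\by}^i]$ inside the norm and applying Jensen's inequality together with the symmetry of $\by^i\otimes\by^i-\tilde{\by}^i\otimes\tilde{\by}^i$ gives the classical symmetrization inequality
$$
\Ept \Bigg\|\frac{1}{n}\sum_{i=1}^n \by^i\otimes\by^i-\mathbf{\Lambda}\Bigg\|
\leq \frac{2}{n}\,\Ept \Bigg\|\sum_{i=1}^n \eps_i\,\by^i\otimes\by^i\Bigg\|\,.
$$
Conditioning on the sample $(\by^i)_{i=1}^n$ and invoking Proposition~\ref{RudLem} (legitimate in the $\ell_2$-case by Remark~\ref{reminfty}), and using the almost sure bound $\max_i\|\by^i\|_2\leq M$ together with $\min\{n,N\}=n$, yields
$$
\Ept_{\eps}\Bigg\|\sum_{i=1}^n \eps_i\,\by^i\otimes\by^i\Bigg\|
\leq C_{\mathrm{R}}\sqrt{\log(8n)}\cdot M\cdot \sqrt{\Bigg\|\sum_{i=1}^n \by^i\otimes\by^i\Bigg\|}\,.
$$

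Next, I take expectation in the $\by^i$ and apply Jensen's inequality to the concave square-root to pull the expectation inside. Splitting $\sum_i \by^i\otimes\by^i = \big(\sum_i\by^i\otimes\by^i-n\mathbf{\Lambda}\big)+n\mathbf{\Lambda}$ and using the triangle inequality for the spectral norm gives
$$
\Ept\Bigg\|\sum_{i=1}^n \by^i\otimes\by^i\Bigg\|
\leq n\,\Ept\Bigg\|\frac{1}{n}\sum_{i=1}^n \by^i\otimes\by^i-\mathbf{\Lambda}\Bigg\|+n\|\mathbf{\Lambda}\|\,.
$$
Denoting $E:=\Ept\|\frac{1}{n}\sum_{i=1}^n \by^i\otimes\by^i-\mathbf{\Lambda}\|$ and $a:=2C_{\mathrm{R}} M\sqrt{\log(8n)/n}$, all of the above chains into the self-bounding inequality $E\leq a\sqrt{E+\|\mathbf{\Lambda}\|}$.

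Squaring gives $E^2-a^2 E-a^2\|\mathbf{\Lambda}\|\leq 0$, whose positive root is $E\leq \tfrac12\big(a^2+\sqrt{a^4+4a^2\|\mathbf{\Lambda}\|}\,\big)$. Bounding the square root by $\sqrt{a^4+4a^2\|\mathbf{\Lambda}\|}\leq a^2+2a\sqrt{\|\mathbf{\Lambda}\|}$ (subadditivity of the square root) produces $E\leq a^2+a\sqrt{\|\mathbf{\Lambda}\|}$, and substituting the value of $a$ gives exactly the claimed bound.

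There is no genuine obstacle; the only mildly delicate point is that the argument must be run with rank-one operators on $\ell_2$ rather than finite matrices, so I would appeal to Remark~\ref{reminfty} to justify that Rudelson's inequality retains the factor $\sqrt{\log(8n)}$ when $N>n$ (or $N=\infty$). Everything else---symmetrization, Jensen, and the quadratic self-bounding trick---is routine.
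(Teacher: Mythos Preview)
Your argument is correct and follows essentially the same route as the paper's proof: symmetrization, Rudelson's lemma (with the $\ell_2$-extension from Remark~\ref{reminfty}), and the self-bounding quadratic $E\leq a\sqrt{E+\|\mathbf{\Lambda}\|}$. The only cosmetic difference is that the paper uses Cauchy--Schwarz to separate $\Ept[\max_i\|\by^i\|_2\cdot\sqrt{\|\sum_i\by^i\otimes\by^i\|}]$ before bounding the first factor by $M$, whereas you bound $\max_i\|\by^i\|_2\leq M$ pointwise and then apply Jensen to the square root; both yield the identical inequality $E\leq a\sqrt{E+\|\mathbf{\Lambda}\|}$.
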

\begin{proof} By well-known symmetrization technique (see \cite[Lem.\ 8.4]{FoRa13}), Proposition~\ref{prop_f31}, and the Cauchy-Schwarz inequality, we obtain

\begin{equation}\nonumber
  \begin{split}
	F := \Ept \Bigg\|\frac{1}{n}\sum\limits_{i=1}^n \by^i \otimes \by^i-\mathbf{\Lambda}\Bigg\| &\leq 2\Ept_{\by} \Ept_{\eps} \Bigg\|\frac{1}{n}\sum\limits_{i=1}^n \eps_i\by^i \otimes \by^i\Bigg\|\\
	&\leq 2C_{\mathrm{R}}\frac{\sqrt{\log (8n)}}{n}\Big(\Ept \max\limits_{i=1,...,n}\|\by^i\|^2_2\Big)^{1/2}
\Bigg(\Ept\Bigg\|\sum\limits_{i=1}^n \by^i \otimes \by^i\Bigg\|\Bigg)^{1/2}\\
&\leq \underbrace{2C_{\mathrm{R}}\frac{\sqrt{\log (8n)}}{\sqrt{n}}M}_{=:a}\Bigg(\underbrace{\Ept\Bigg\|\frac{1}{n}\sum\limits_{i=1}^n \by^i \otimes \by^i-\mathbf{\Lambda}\Bigg\|}_{=F}
+\underbrace{\|\mathbf{\Lambda}\|}_{=:b}\Bigg)^{1/2}\,.
  \end{split}	
\end{equation}
Hence, we get
$F^2\leq a^2(F+b)$
and we solve this inequality with respect to $F$, which yields
$$0\leq F \le \frac{a^2}{2}+\sqrt{\frac{a^4}{4}+a^2b}\le a^2+a\sqrt{b}$$
and this corresponds to the assertion.
\end{proof}

\section{Worst-case errors for least-squares regression}
\label{worst_case_analysis}

\subsection{Random matrices from sampled orthonormal systems}

Let us start with a concentration inequality for the spectral norm of a matrix of type \eqref{f0}. 
It turns out that the complex matrix $\bL_m:=\bL_m(\bX)\in\C^{n\times(m-1)}$ has full rank with high probability, where the elements of the set $\bX\subset D$ of sampling nodes are drawn i.i.d.\ at random according to $\varrho_D$.
We will find below that the eigenvalues of 
\begin{equation*}
\mathbf{H}_m:=\mathbf{H}_m(\bX) = \frac{1}{n}\bL_m^\ast \, \bL_m \in\C^{(m-1)\times(m-1)}
\end{equation*}
are bounded away from zero with high probability if $m$ is small enough compared to $n$. We speak of an ``oversampling factor'' $n/m$. In case of a bounded orthonormal system with BOS constant $B$, see \eqref{f20}, it will turn out that a logarithmic oversampling is sufficient, see \eqref{f1c} below. Note that the boundedness constant $B$ may also depend on the underlying spatial dimension $d$. However, if for instance the complex Fourier system $\{\exp(2\pi\mathrm{i}\,\bk\cdot \bx) \colon \bk \in \Z^d\}$ is considered, we are in the comfortable situation that $B = 1$.

\begin{proposition}\label{prop1}
 Let $n,m \in \N$, $m\ge2$. Let further $\{\eta_1(\cdot),  \eta_2(\cdot), \eta_3(\cdot),...\}$ be the orthonormal system in $L_2(D,\varrho_D)$ induced by the kernel $K$ and the $n$ sampling nodes in $\bX$ be drawn i.i.d.\ at random according to $\varrho_D$. Then it holds for $0<t <1$ that 
$$
 	\Prob(\|\mathbf{H}_m - \mathbf{I}_m\| > t) \leq (2n)^{\sqrt{2}} \, \exp\Bigg(-\frac{nt^2}{12\cdot N(m)}\Bigg)\,,
$$
where $N(m)$ is defined in \eqref{f1b} and $\mathbf{I}_m=\diag(\boldsymbol{1})\in\{0,1\}^{(m-1)\times(m-1)}$.
\end{proposition}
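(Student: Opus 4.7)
The proof will be a direct application of Oliveira's matrix concentration inequality (Proposition~\ref{Oliv}) to the random rank-one sums that make up $\mathbf{H}_m$. The plan is to rewrite $\mathbf{H}_m$ as an empirical mean of i.i.d.\ rank-one matrices, identify their expectation and almost-sure norm bound, and then feed this into Proposition~\ref{Oliv}.

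\textbf{Step 1: Rewriting $\mathbf{H}_m$ as a sum of rank-one matrices.} For each $j=1,\ldots,n$, I would define the random (column) vector
\[
\by^j := (\eta_1(\bx^j),\ldots,\eta_{m-1}(\bx^j))^\top \in \C^{m-1}.
\]
Then row $j$ of $\bL_m$ is $(\by^j)^\ast$, so $\bL_m^\ast \bL_m = \sum_{j=1}^n \by^j(\by^j)^\ast = \sum_{j=1}^n \by^j\otimes \by^j$, which yields
\[
\mathbf{H}_m = \frac{1}{n}\sum_{j=1}^n \by^j \otimes \by^j.
\]
Since the $\bx^j$ are i.i.d.\ with law $\varrho_D$, so are the $\by^j$.

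\textbf{Step 2: Expectation and almost-sure bound.} The $(k,\ell)$-entry of $\Ept(\by^j \otimes \by^j)$ equals $\Ept\bigl[\eta_k(\bx^j)\,\overline{\eta_\ell(\bx^j)}\bigr] = \langle \eta_k,\eta_\ell\rangle_{L_2(D,\varrho_D)} = \delta_{k,\ell}$, because $(\eta_k)_{k\in\N}$ is orthonormal in $L_2(D,\varrho_D)$ (see Section~\ref{setting}). Thus $\mathbf{\Lambda}:=\Ept(\by^j\otimes \by^j) = \mathbf{I}_m$. Moreover, by the definition~\eqref{f1b} of $N(m)$,
\[
\|\by^j\|_2^2 = \sum_{k=1}^{m-1}|\eta_k(\bx^j)|^2 \leq N(m) \quad \text{for every } \bx^j\in D,
\]
so we may take $M = \sqrt{N(m)}$ in Proposition~\ref{Oliv}.

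\textbf{Step 3: Invoking Proposition~\ref{Oliv}.} Applied with $N = m-1$, the proposition yields, for $0<t<1$,
\[
\Prob(\|\mathbf{H}_m - \mathbf{I}_m\| > t) \leq (2\min(n,m-1))^{\sqrt{2}}\exp\!\left(-\frac{n t^2}{12\,N(m)}\right).
\]
Since $\min(n,m-1)\leq n$, the factor in front is bounded by $(2n)^{\sqrt{2}}$, and the claim follows. The ``main obstacle'' is really only bookkeeping: identifying the right vectors, verifying orthonormality gives the identity expectation, and matching $M^2$ with $N(m)$. There is no genuinely hard step once Proposition~\ref{Oliv} is available.
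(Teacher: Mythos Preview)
Your proof is correct and follows essentially the same route as the paper: both rewrite $\mathbf{H}_m$ as an average of i.i.d.\ rank-one matrices $\by^j\otimes\by^j$, identify $\Ept(\by^j\otimes\by^j)=\mathbf{I}_m$ via orthonormality, bound $\|\by^j\|_2^2\le N(m)$, and apply Proposition~\ref{Oliv}. The only cosmetic difference is that the paper defines $\by^i$ with a conjugation (writing $(\ldots)^\ast$ rather than $(\ldots)^\top$), but since $\bL_m^\ast\bL_m$ is Hermitian this does not affect anything.
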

\begin{proof}
We set $\by^i := \left(\eta_1(\bx^i), \ldots, \eta_{m-1}(\bx^i)\right)^\ast$, $i=1,...,n,$
and observe
\begin{equation*}
\mathbf{H}_m:=\mathbf{H}_m(\bX) = \frac{1}{n}\bL_m^\ast \, \bL_m = \frac{1}{n} \sum\limits_{i = 1}^n \by^i \otimes \by^i.
\end{equation*}
 Moreover, due to the fact that we have an orthonormal system $(\eta_k)_{k\in \N}$, we obtain that $\Ept(\mathbf{H}_m) = \mathbf{I}_m$.
The result follows by noting that $M^2 \leq N(m)$ in Proposition \ref{Oliv}.
\end{proof}

\begin{remark}
\sloppy
From this proposition we immediately obtain that the matrix $\bH_m \in \C^{(m-1) \times (m-1)}$ has only eigenvalues larger than $t:=1/2$ with probability at least $1-\delta$ if 
\begin{equation}\label{f1}
N(m) \leq \frac{n}{48(\sqrt{2}\log(2n)-\log\delta)}\,.
\end{equation}
Hence, in case of a bounded orthonormal system with BOS constant $B>0$, see \eqref{f20}, we may choose
\begin{equation}\label{f1c}
	m \leq \kappa_{\delta,B}\frac{n}{\log(2n)}
\end{equation}
with $\kappa_{\delta,B} := (\log(1/\delta)+\sqrt{2})^{-1}B^{-2}/48$. %
\end{remark}

From Proposition \ref{prop1} we get that all $m-1$ singular values $\tau_1,..., \tau_{m-1}$ of $\bL_m$ from \eqref{f0} are all not smaller than $\sqrt{n/2}$ and not larger than $\sqrt{3n/2}$ with probability at least $1-\delta$ if $m$ is chosen such that~\eqref{f1} holds. In terms of Proposition \ref{prop2} this means that $\tau_1,..., \tau_{m-1} \geq \sqrt{n/2}$. This leads to an upper bound on the norm of the Moore-Penrose inverse that is required for the least squares algorithm. 

\begin{proposition}\label{prop:norm_Lm_inv}
 Let $\{\eta_1(\cdot),  \eta_2(\cdot), \eta_3(\cdot),...\}$ be the orthonormal system in $L_2(D,\varrho_D)$ 
induced by the kernel $K$. Let further $m,n \in \N$, $m\ge 2$, and $0 < \delta<1$ be chosen such that they satisfy \eqref{f1}. Then the random matrix $\bL_m$ from \eqref{f0} satisfies
$$
	\|(\bL_m^{\ast}\,\bL_m)^{-1}\,\bL_m^{\ast}\| \leq \sqrt{\frac{2}{n}}
$$
with probability at least $1-\delta$\,.
\end{proposition}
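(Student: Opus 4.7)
The plan is to combine Proposition \ref{prop1} with the bound from the statement's condition \eqref{f1}, and then feed the resulting singular value lower bound into Proposition \ref{prop2}(iii).

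First, I would invoke Proposition \ref{prop1} with the specific choice $t = 1/2$. This yields
\[
\Prob\bigl(\|\mathbf{H}_m - \mathbf{I}_m\| > 1/2 \bigr) \leq (2n)^{\sqrt{2}} \exp\!\Bigl(-\frac{n}{48\, N(m)}\Bigr).
\]
The next step is to show that, under hypothesis \eqref{f1}, this upper bound is at most $\delta$. Rearranging \eqref{f1} gives $n/(48\,N(m)) \geq \sqrt{2}\log(2n) - \log\delta$, so the exponential factor is bounded by $\delta \cdot (2n)^{-\sqrt{2}}$, which cancels the pre-factor and leaves exactly $\delta$. Hence with probability at least $1 - \delta$ the event $\|\mathbf{H}_m - \mathbf{I}_m\| \leq 1/2$ holds.

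On that event, every eigenvalue of $\mathbf{H}_m = \tfrac{1}{n}\bL_m^\ast \bL_m$ lies in $[1/2,\,3/2]$, so every singular value $\tau_k$ of $\bL_m$ satisfies $\tau_k \geq \sqrt{n/2}$; in particular, $\bL_m$ has full column rank and its smallest singular value $\tau_{m-1}$ obeys $\tau_{m-1}^{-1} \leq \sqrt{2/n}$. Proposition \ref{prop2}(iii) then yields the stated bound
\[
\|(\bL_m^{\ast}\bL_m)^{-1}\bL_m^{\ast}\| \leq \tau_{m-1}^{-1} \leq \sqrt{2/n}
\]
on the same event, completing the argument.

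There is no real obstacle here: once Proposition \ref{prop1} is in place, the result reduces to a short calibration of the constants $12$ and $48$ inside the exponential (the factor $t^2 = 1/4$ converts $12$ into $48$, which matches precisely the $48$ appearing in \eqref{f1}) and a one-line application of Proposition \ref{prop2}. The only place where care is required is to verify that the inequality in \eqref{f1} is tight enough to absorb both the polynomial pre-factor $(2n)^{\sqrt{2}}$ and the target failure probability $\delta$; this is exactly why the condition \eqref{f1} carries the $\sqrt{2}\log(2n) - \log\delta$ term rather than just a $\log(n/\delta)$ term.
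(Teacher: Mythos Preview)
Your proof is correct and follows exactly the approach the paper uses: apply Proposition~\ref{prop1} with $t=1/2$, use \eqref{f1} to bound the failure probability by $\delta$, and then invoke Proposition~\ref{prop2}(iii) on the good event. The paper does not spell out a separate proof for this proposition but derives it from precisely the same remark (after Proposition~\ref{prop1}) and the paragraph preceding the statement; your write-up simply makes that implicit derivation explicit, including the calibration $t^2/12 = 1/48$ that matches \eqref{f1}.
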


In addition to the matrix $\bL_m$, we need to consider a second linear operator that is defined using sampling values of the eigenfunctions $e_j$. The importance of this operator has been pointed out in \cite{KrUl19}, where strong results on the concentration of infinite dimensional random matrices have been used. Since we only need the expectation of the norm, we only use Rudelson's lemma, see Proposition \ref{RudLem}, and a symmetrization technique. This allows us to control the constants.  

\begin{proposition}\label{cor8} Let $\bX=\{\bx^1,\ldots,\bx^n\}\subset D$ be a set of $n$ 
sampling nodes drawn uniformly and i.i.d.\ at random according to $\varrho_D$, and consider the $n$ i.i.d.\ random sequences   
$$
	\by^i = (e_m(\bx^i),e_{m+1}(\bx^i),...)^\top\,,\quad i=1,...,n\,,
$$
together with $T(m):= \sup\limits_{\bx \in D}\sum\limits_{k= m}^{\infty} |e_k(\bx)|^2 < \infty$, see \eqref{f1b}. Then the operator 
\begin{equation*}
		 \mathbf{\Phi}_m\colon\ell_2 \to \R^n,\quad
		 \mathbf{z} \mapsto \left(\begin{array}{c}
		 \<\mathbf{z},\by^1\>_{\ell_2}\\
		 \vdots\\
		 \<\mathbf{z},\by^n\>_{\ell_2}
		 \end{array}\right)	    
\end{equation*}
has expected norm
\begin{equation}\label{expLambda}
	\Ept(\|\mathbf{\Phi}_m\|^2)\leq 
	n\left(\sigma_m^2+4\,C_{\mathrm{R}}^2\frac{\log (8n)}{n}T(m)+2\,C_{\mathrm{R}}\,\sigma_m\,\sqrt{\frac{\log (8n)}{n}T(m)}\right)\,.
\end{equation}
\end{proposition}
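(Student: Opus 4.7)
The plan is to reduce the bound on $\Ept(\|\mathbf{\Phi}_m\|^2)$ to an application of Corollary \ref{cor5}. The starting point is the operator identity $\|\mathbf{\Phi}_m\|^2 = \|\mathbf{\Phi}_m^\ast \mathbf{\Phi}_m\|$, and since $\mathbf{\Phi}_m^\ast\mathbf{\Phi}_m : \ell_2 \to \ell_2$ has the explicit rank-$1$ representation $\mathbf{\Phi}_m^\ast\mathbf{\Phi}_m = \sum_{i=1}^n \by^i \otimes \by^i$, the goal becomes controlling $\Ept\|\sum_{i=1}^n \by^i \otimes \by^i\|$.

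First I would identify the mean matrix $\mathbf{\Lambda} := \Ept(\by^i \otimes \by^i)$. By definition of the $\by^i$ and independence of the entries under the expectation, the $(k,\ell)$-entry of $\mathbf{\Lambda}$ equals $\Ept(e_{m-1+k}(\bx^i)\overline{e_{m-1+\ell}(\bx^i)}) = \langle e_{m-1+k},e_{m-1+\ell}\rangle_{L_2(D,\varrho_D)}$. Since Section \ref{setting} shows that the $(e_j)_{j\in\N}$ form an orthogonal system in $L_2(D,\varrho_D)$ with $\|e_j\|_2^2 = \lambda_j = \sigma_j^2$, we conclude $\mathbf{\Lambda} = \diag(\sigma_m^2,\sigma_{m+1}^2,\ldots)$ and in particular $\|\mathbf{\Lambda}\| = \sigma_m^2$. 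Next I would note the uniform bound $\|\by^i\|_{\ell_2}^2 = \sum_{k=m}^\infty |e_k(\bx^i)|^2 \leq T(m)$ almost surely, so Corollary \ref{cor5} (in its $\ell_2$-version, whose validity for rank-$1$ operators on $\ell_2$ is justified in Remark \ref{reminfty}) applies with $M^2 = T(m)$ and yields
\begin{equation*}
    \Ept\Bigl\|\tfrac{1}{n}\sum_{i=1}^n \by^i\otimes \by^i - \mathbf{\Lambda}\Bigr\| \leq 4C_{\mathrm{R}}^2\frac{\log(8n)}{n}T(m) + 2C_{\mathrm{R}}\sigma_m\sqrt{\frac{\log(8n)}{n}T(m)}\,.
\end{equation*}

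Finally I would combine these via the triangle inequality $\|\sum_i \by^i\otimes\by^i\| \leq n\|\mathbf{\Lambda}\| + n\|\tfrac{1}{n}\sum_i \by^i\otimes \by^i - \mathbf{\Lambda}\|$ and take expectations to obtain precisely the right-hand side of \eqref{expLambda}. The main conceptual obstacle — and the reason I would spend a line justifying it — is that $\mathbf{\Phi}_m^\ast\mathbf{\Phi}_m$ is an operator on the infinite-dimensional space $\ell_2$, so both Rudelson's lemma (Proposition \ref{RudLem}) and the symmetrization step in Corollary \ref{cor5} must be used in their $\ell_2$-variants as outlined in Remark \ref{reminfty}. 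Everything else is essentially bookkeeping: orthogonality in $L_2(D,\varrho_D)$ gives the diagonal structure of $\mathbf{\Lambda}$, and the definition of $T(m)$ supplies the almost-sure norm bound on $\by^i$ needed to feed into the concentration inequality.
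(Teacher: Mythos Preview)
Your proposal is correct and follows essentially the same route as the paper: identify $\mathbf{\Phi}_m^\ast\mathbf{\Phi}_m=\sum_i \by^i\otimes\by^i$, compute $\mathbf{\Lambda}=\diag(\sigma_m^2,\sigma_{m+1}^2,\ldots)$ with $\|\mathbf{\Lambda}\|=\sigma_m^2$, bound $\|\by^i\|_2^2\le T(m)$, apply Corollary~\ref{cor5} in its $\ell_2$-version via Remark~\ref{reminfty}, and finish with the triangle inequality. The paper's proof is essentially a terser version of exactly these steps.
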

\begin{proof}
Note that $\mathbf{\Phi}_m^{\ast}\mathbf{\Phi}_m = \sum\limits_{i=1}^n \by^i \otimes \by^i$ and
$$
	\mathbf{\Lambda}_m := \Ept\Big(\frac{1}{n}\mathbf{\Phi}_m^{\ast}\mathbf{\Phi}_m\Big) = \Ept(\by^i \otimes \by^i) = 
	\diag(\sigma^2_m,\sigma^2_{m+1},...)\,.
$$
This gives
$$
	\|\mathbf{\Phi}_m\|^2 = \|\mathbf{\Phi}_m^{\ast}\mathbf{\Phi}_m\| 
	\le \|\mathbf{\Phi}_m^{\ast}\mathbf{\Phi}_m-n\mathbf{\Lambda}_m\|+n\|\mathbf{\Lambda}_m\|
	\,.
$$
Finally, the bound in \eqref{expLambda} follows from Corollary \ref{cor5} (see also Remark \ref{reminfty} for $N = \infty$), the fact that $\|\mathbf{\Lambda}_m\| = \lambda_m = \sigma_m^2$ and $M^2 = T(m)$\,.
\end{proof}

\subsection{Worst-case errors with high probability}

\begin{theorem}\label{thm9} Let $H(K)$ be a separable reproducing kernel Hilbert space on a domain $D \subset \R^d$ with a positive semidefinite kernel $K(\bx,\by)$ such that $\sup_{\bx \in D} K(\bx,\bx) <\infty$. We denote with $(\sigma_j)_{j\in\N}$ the non-increasing sequence of singular numbers of the embedding $\Id:H(K) \to L_2(D,\varrho_D)$ for a probability measure $\varrho_D$\,.
Let further $0<\delta <1$ and $m,n \in \N$,  where $m\ge 2$ is chosen such that \eqref{f1} holds. Drawing the $n$ sampling nodes in $\bX$ i.i.d.\ at random according to $\varrho_D$, we have for the conditional expectation of the worst-case error
\begin{equation}
\begin{split}
&\Ept\Big(\sup\limits_{\|f\|_{H(K)}\leq 1} \|f-S^m_\bX f\|^2_{L_2(D,\varrho_D)}\Big|\|\mathbf{H}_m-\mathbf{I}_m\| \leq 1/2\Big)
\\
&\hspace{2.5cm}\leq \frac{1}{1-\delta}\left(3\sigma_m^2+8\,C_{\mathrm{R}}^2\frac{\log (8n)}{n}T(m)+4\,C_{\mathrm{R}}\,\sigma_m\,\sqrt{\frac{\log (8n)}{n}T(m)}\right)\label{eq:Ept_sum_detail}
\\
&\hspace{2.5cm}\leq \frac{3+8\,C_{\mathrm{R}}^2+4\,C_{\mathrm{R}}}{1-\delta}\,\max\Big\{\sigma^2_m ,\frac{\log (8n)}{n}T(m)\Big\}\,
\end{split}
\end{equation}
with $C_{\mathrm{R}}$ from \eqref{f31}.
\end{theorem}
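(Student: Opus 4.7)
The plan is to orthogonally decompose any $f\in H(K)$ with $\|f\|_{H(K)}\le 1$ as $f = P_m f + g$, where $P_m$ denotes the $H(K)$-orthogonal projection onto $V_m := \operatorname{span}(e_1,\ldots,e_{m-1}) = \operatorname{span}(\eta_1,\ldots,\eta_{m-1})$ and $g = f-P_m f$. On the event $\mathcal{A} := \{\|\mathbf{H}_m-\mathbf{I}_m\|\le 1/2\}$, which has probability at least $1-\delta$ by Proposition~\ref{prop1} under assumption~\eqref{f1}, the matrix $\bL_m$ has full column rank (all its singular values lie in $[\sqrt{n/2},\sqrt{3n/2}]$), so Algorithm~\ref{algo1} returns, for input data $(h(\bx^j))_{j=1}^n$ coming from any $h\in V_m$, precisely the coefficient vector of $h$. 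In particular $S^m_\bX$ fixes $V_m$, and the reconstruction error collapses to $f-S^m_\bX f = g - S^m_\bX g$.

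The crucial observation is that these two summands are orthogonal in $L_2(D,\varrho_D)$. Indeed, by the spectral theorem applied to $W_{\varrho_D}=\Id^{\ast}\Id$ one may write $g = \sum_{k\ge m}\langle f,e_k\rangle_{H(K)}\,e_k + g_0$ with $g_0\in\ker \Id$; since $g_0$ vanishes as an $L_2$-element and the eigenfunctions $(e_k)$ are $L_2$-orthogonal, $g\perp S^m_\bX g$ in $L_2(D,\varrho_D)$. Pythagoras therefore gives
\[
\|f-S^m_\bX f\|_{L_2}^2 = \|g\|_{L_2}^2 + \|S^m_\bX g\|_{L_2}^2 \le \sigma_m^2 + \|S^m_\bX g\|_{L_2}^2,
\]
where the last bound uses $\|g\|_{L_2}^2 = \sum_{k\ge m}\sigma_k^2|\langle f,e_k\rangle_{H(K)}|^2 \le \sigma_m^2\|g\|_{H(K)}^2\le\sigma_m^2$. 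Avoiding the naive triangle inequality at this step, which would double every constant in the final bound, is essential for obtaining the precise factors $3,\,8C_\mathrm{R}^2,\,4C_\mathrm{R}$ of the claim.

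For the remaining term I would exploit that $(\eta_k)$ is $L_2$-orthonormal, so $\|S^m_\bX g\|_{L_2}^2 = \|\mathbf{c}\|_{\ell_2}^2$, and apply Proposition~\ref{prop:norm_Lm_inv} on $\mathcal{A}$ to deduce $\|\mathbf{c}\|_{\ell_2}^2 \le (2/n)\sum_{j=1}^n |g(\bx^j)|^2$. Expanding $g = \sum_{k\ge m}b_k e_k$ in $L_2$ with $\|\mathbf{b}\|_{\ell_2}\le\|g\|_{H(K)}\le 1$, the sample vector $(g(\bx^j))_{j=1}^n$ is, up to a harmless complex conjugation, the image of $\mathbf{b}$ under the random operator $\mathbf{\Phi}_m$ from Proposition~\ref{cor8}. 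Uniformly in $f$ with $\|f\|_{H(K)}\le 1$ this yields
\[
\sup_{\|f\|_{H(K)}\le 1}\|f-S^m_\bX f\|_{L_2}^2 \;\le\; \sigma_m^2 + \frac{2}{n}\,\|\mathbf{\Phi}_m\|^2 \quad\text{on } \mathcal{A}.
\]

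Finally I would take the conditional expectation on $\mathcal{A}$: since the right-hand side is non-negative and $\Prob(\mathcal{A})\ge 1-\delta$, the elementary bound $\Ept(X\mid\mathcal{A})\le(1-\delta)^{-1}\Ept X$ combined with the estimate of $\Ept\|\mathbf{\Phi}_m\|^2$ from Proposition~\ref{cor8} produces exactly the detailed bound~\eqref{eq:Ept_sum_detail}. The cleaner max-form then follows from the elementary inequality $\sqrt{AB}\le\max\{A,B\}$ applied to the cross term $\sigma_m\sqrt{(\log(8n)/n)T(m)}$. The only genuinely subtle step is the $L_2$-orthogonality argument in the second paragraph (in particular the handling of the $\ker\Id$-component of $g$ in the possibly non-separable setting); once that is in place the rest is assembly of the facts already proved in Sections~\ref{sect_prob} and \ref{worst_case_analysis}.
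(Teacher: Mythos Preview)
Your overall strategy matches the paper's proof: the Pythagorean split, the reproduction property $S^m_\bX P_{m-1}f=P_{m-1}f$, the bound $\|(\bL_m^\ast\bL_m)^{-1}\bL_m^\ast\|\le\sqrt{2/n}$ on the event $\mathcal{A}$, the reduction to $\|\mathbf{\Phi}_m\|^2$, and the passage to conditional expectation via $\Ept(X\mid\mathcal{A})\le(1-\delta)^{-1}\Ept X$ are all exactly as in the paper.

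There is, however, a genuine gap in the step where you pass from the $L_2$-expansion $g=\sum_{k\ge m}b_k e_k+g_0$ to the claim that the sample vector $(g(\bx^j))_j$ equals $\mathbf{\Phi}_m\mathbf{b}$ (up to conjugation). Being in $\ker\Id$ only says $g_0=0$ as an $L_2$-class; it does \emph{not} say $g_0(\bx^j)=0$ pointwise, and it is precisely the pointwise values that enter $\bL_m$ and hence $S^m_\bX$. So as written the bound $\sum_j|g(\bx^j)|^2\le\|\mathbf{\Phi}_m\|^2\|\mathbf{b}\|_2^2$ is unjustified. You have also mislocated the subtlety: the $L_2$-orthogonality $g\perp S^m_\bX g$ is indeed routine; the delicate step is this pointwise evaluation, and the theorem assumes $H(K)$ separable, so ``non-separable setting'' is not the issue.

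The paper closes this gap explicitly. Writing $T(\cdot,\bx):=K(\cdot,\bx)-\sum_{j\ge1}e_j(\cdot)\overline{e_j(\bx)}$, one has the \emph{pointwise} identity
\[
(f-P_{m-1}f)(\bx)=\sum_{j\ge m}\langle f,e_j\rangle_{H(K)}\,e_j(\bx)+\langle f,T(\cdot,\bx)\rangle_{H(K)},
\]
so $\sum_j|g(\bx^j)|^2\le\|\mathbf{b}\|_2^2\|\mathbf{\Phi}_m\|^2+3\|K\|_\infty\|f\|_{H(K)}^2\sum_j\|T(\cdot,\bx^j)\|_{H(K)}$. The extra term vanishes in expectation because $\|T(\cdot,\bx)\|_{H(K)}^2=K(\bx,\bx)-\sum_j|e_j(\bx)|^2\ge0$ has $\varrho_D$-integral zero; this is exactly the trace equality $\sum_k\lambda_k=\int_D K(\bx,\bx)\,\varrho_D(\mathrm{d}\bx)$, which holds under separability of $H(K)$ and $\sigma$-finiteness of $\varrho_D$ (see~\eqref{C_K} and the remark after it). Equivalently, $g_0(\bx)=\langle g_0,T(\cdot,\bx)\rangle_{H(K)}=0$ for $\varrho_D$-a.e.\ $\bx$, hence $g_0(\bx^j)=0$ almost surely --- which is what your argument tacitly needs. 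Once you insert this justification, your proof coincides with the paper's.
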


\begin{proof}\sloppy Let $f \in H(K)$ such that $\|f\|_{H(K)} \leq 1$\,. Let further $\bX=\{\bx^1,\ldots,\bx^n\}$ be such that $\|\mathbf{H}_m-\mathbf{I}_m\| \leq 1/2$. Using orthogonality and the reproducing property $S^m_\bX P_{m-1}f=P_{m-1}f$, we estimate 
\begin{equation}\label{estim0}
  \begin{split}
	\|f-S^m_\bX f\|^2_{L_2(D,\varrho_D)} &= \|f-P_{m-1}f\|^2_{L_2(D,\varrho_D)}+\|P_{m-1}f-S^m_\bX f\|^2_{L_2(D,\varrho_D)}\\
	&\leq \sigma^2_m + \|S^m_\bX (P_{m-1}f-f)\|^2_{L_2(D,\varrho_D)}\\
	&=\sigma^2_m + \left\|(\bL_m^{\ast}\,\bL_m)^{-1}\,\bL_m^{\ast} \left(
	(P_{m-1}f-f)(\bx^k)
		\right)_{k=1}^n\right\|^2_{2}\\
		&\leq \sigma_m^2+\frac{2}{n}\sum\limits_{k=1}^n \Big|\Big(f-P_{m-1}f\Big)(\bx^k)\Big|^2\,,
   \end{split}
\end{equation}	
where $P_{m-1}f$ denotes the projection $\sum_{j=1}^{m-1} \langle f,e_j \rangle e_j$ in $H(K)$ yielding  
$\|f - P_{m-1}f\|_{L_2(D,\varrho_D)} \leq \sigma_m$. Note further, that for any $\bx \in D$
\begin{equation*}
  \begin{split}
	\Big(f-P_{m-1}f\Big)(\bx) &= \langle f, K(\cdot,\bx)-\sum\limits_{j=1}^{\infty}e_j(\cdot)\overline{e_j(\bx)}
	+\sum\limits_{j=1}^{\infty}e_j(\cdot)\overline{e_j(\bx)}-\sum\limits_{j=1}^{m-1}e_j(\cdot)\overline{e_j(\bx)} \rangle_{H(K)}\\
	&= \sum\limits_{j=m}^\infty \langle f,e_j\rangle_{H(K)} e_j(\bx) + \langle f, T(\cdot,\bx)\rangle_{H(K)}\,,
  \end{split}
\end{equation*}
where $T(\cdot,\bx) = K(\cdot,\bx)-\sum_{j=1}^{\infty}e_j(\cdot)\overline{e_j(\bx)}$ denotes an element in $H(K)$. Its norm is given by
\begin{equation}\label{norm_T}
	\|T(\cdot,\bx)\|_{H(K)}^2 := \langle T(\cdot,\bx), T(\cdot,\bx)\rangle_{H(K)} = K(\bx,\bx)-\sum_{j=1}^{\infty}|e_j(\bx)|^2\,.
\end{equation}
This gives 
\begin{equation*}
  \begin{split}
	\Big|\Big(f-P_{m-1}f\Big)(\bx)\Big|^2 \leq& \Bigg|\sum\limits_{j=m}^\infty \langle f,e_j\rangle_{H(K)} e_j(\bx)\Bigg|^2
	+ 2\|f\|_{H(K)}^2\|T(\cdot,\bx)\|_{H(K)}\sqrt{\sum\limits_{j=m}^{\infty}|e_j(\bx)|^2}\\
	&~~~+ \|f\|_{H(K)}^2 \|T(\cdot,\bx)\|_{H(K)}^2\\
	\overset{\eqref{K(x,x)}}{\leq}& \Bigg(\sum\limits_{i=m}^\infty \overline{\langle f,e_i\rangle_{H(K)} e_i(\bx)}\Bigg) \Bigg(\sum\limits_{j=m}^\infty \langle f,e_j\rangle_{H(K)} e_j(\bx)\Bigg)\\
	&~~~+\|f\|_{H(K)}^2\|T(\cdot,\bx)\|_{H(K)}\Big(\|T(\cdot,\bx)\|_{H(K)}+2\|K\|_{\infty}\Big)\\
	\leq& \sum\limits_{i=m}^\infty \sum\limits_{j=m}^\infty \overline{\langle f,e_i\rangle_{H(K)}} \langle f,e_j\rangle_{H(K)} \overline{e_i(\bx)} e_j(\bx)\\
	&~~~+3\|T(\cdot,\bx)\|_{H(K)}\|K\|_{\infty}\|f\|_{H(K)}^2\,.
\end{split}
\end{equation*}
Returning to \eqref{estim0} we estimate 
\begin{equation}\label{estim}
  \begin{split}
\sum\limits_{k=1}^n\Big|\Big(f-P_{m-1}f\Big)(\bx^k)\Big|^2 &\leq \|(\langle f,e_j\rangle_{H(K)})_{j\in \N}\|^2_2 \; \|\mathbf{\Phi}_m\|^2
	+3\|K\|_{\infty}\|f\|_{H(K)}^2\sum\limits_{k=1}^{n}\|T(\cdot,\bx^k)\|\\
	&\leq \|f\|_{H(K)}^2\Bigg(\|\mathbf{\Phi}_m\|^2 +3\|K\|_{\infty}\sum\limits_{k=1}^{n}\|T(\cdot,\bx^k)\|\Bigg)\,,
\end{split}
\end{equation}
where $\mathbf{\Phi}_m$ denotes the infinite matrix from Proposition~\ref{cor8}. Note that we used \eqref{eig_vectors} in the last but one step. 
The relation in \eqref{estim} together with \eqref{estim0} and $\|f\|_{H(K)} \leq 1$ implies
\begin{align*}
	\|f-S^m_\bX f\|^2_{L_2(D,\varrho_D)}
	&\leq \sigma^2_m + \|(\bL_m^{\ast}\,\bL_m)^{-1}\,\bL_m^{\ast}\|^2\cdot \sum\limits_{k=1}^n \Big|\Big(f-P_{m-1}f\Big)(\bx^k)\Big|^2\\	&= \sigma^2_m + \frac{2}{n}\|\mathbf{\Phi}_m\|^2 + \frac{6\|K\|_{\infty}}{n}\sum\limits_{k=1}^n \|T(\cdot,\bx^k)\|\,.
\end{align*}
Integrating on both sides yields 
\begin{equation}\label{eq55}
  \begin{split}
	&\int\limits_{\|\mathbf{H}_m-\mathbf{I}_m\| \leq 1/2}\sup\limits_{\|f\|_{{H(K)}}\leq 1}\|f-S^m_\bX f\|^2_{L_2(D,\varrho_D)} \, \varrho^n_D(\mathrm{d}\bX) \\
	&~~~~\leq \sigma^2_m + \frac{2}{n}\Ept(\|\mathbf{\Phi}_m\|^2)+
\frac{6\|K\|_{\infty}}{n}\sum\limits_{k=1}^n \int_D\|T(\cdot,\bx^k)\| \, \varrho_D(\mathrm{d}\bx)	\\
&~~~~= \sigma^2_m + \frac{2}{n}\Ept(\|\mathbf{\Phi}_m\|^2)\,.
\end{split}
\end{equation}
Note that the integral on the right-hand side of \eqref{eq55} vanishes because of \eqref{norm_T} and the fact that we have an equality sign in \eqref{C_K} due to our assumptions (separability of $H(K)$). This gives
$$
	0 = \int_D K(\bx,\bx) \, \varrho_D(\mathrm{d}\bx)-\int_D\sum\limits_{j=1}^{\infty}\lambda_j \, \varrho_D(\mathrm{d}\bx) = \int_D  \Bigg(K(\bx,\bx)-\sum\limits_{j=1}^{\infty}|e_j(\bx)|^2\Bigg) \, \varrho_D(\mathrm{d}\bx)\,.
$$
Taking Proposition \ref{cor8} and \eqref{eq:expectation_cond} into account and noting that $\Prob(\|\mathbf{H}_m-\mathbf{I}_m\| \leq 1/2)$ is larger than $1-\delta$, we obtain the assertion.
\end{proof}

In addition to that we may easily get a deviation inequality by using Markov's inequality and standard arguments. It reads as follows. 
\begin{corollary}\label{cor10} Under the same assumptions as in Theorem \ref{thm9} it holds for fixed $\delta >0$
\begin{equation}  \Prob\Bigg(\sup\limits_{\|f\|_{{H(K)}}\leq 1} \|f-S^m_\bX f\|^2_{L_2(D,\varrho_D)}
  \leq \frac{C}{\delta}\,\max\Big\{\sigma^2_m,\frac{\log(8n)}{n}T(m)	
 \Big\}\Bigg) \geq 1-3\delta\,,
\label{eq:cor_main_prob}
\end{equation}
where $C:=3+8\,C_{\mathrm{R}}^2+4\,C_{\mathrm{R}}<28.05$ is an absolute constant.
\end{corollary}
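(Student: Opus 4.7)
\bigskip
\noindent\textbf{Proof proposal.}
The plan is to convert the conditional expectation bound of Theorem~\ref{thm9} into a tail bound by the conditional Markov inequality and then remove the conditioning at the cost of one extra $\delta$ in the failure probability. Let
$$
X := \sup_{\|f\|_{H(K)}\leq 1}\|f-S^m_{\bX}f\|_{L_2(D,\varrho_D)}^2, \qquad M := \max\Big\{\sigma_m^2,\,\tfrac{\log(8n)}{n}T(m)\Big\},
$$
and introduce the favourable event $A := \{\|\mathbf{H}_m-\mathbf{I}_m\|\leq 1/2\}$. Since $m$ satisfies the hypothesis \eqref{f1} inherited from Theorem~\ref{thm9}, Proposition~\ref{prop1} applied with $t = 1/2$ immediately gives $\Prob(A)\geq 1-\delta$.

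Applying the conditional Markov inequality \eqref{eq:markov_cond} to $X$ on the event $A$ and plugging in the bound from Theorem~\ref{thm9} yields, for every $t>0$,
$$
\Prob(X > t \mid A) \leq \frac{\Ept(X\mid A)}{t} \leq \frac{3+8C_{\mathrm{R}}^2+4C_{\mathrm{R}}}{(1-\delta)\,t}\,M.
$$
I then set $t := \frac{C}{\delta}M$ with $C := 3+8C_{\mathrm{R}}^2+4C_{\mathrm{R}}$, so that $\Prob(X>t\mid A)\leq \delta/(1-\delta)$. A direct numerical check, substituting $C_{\mathrm{R}} = \sqrt 2+\frac{1}{4\sqrt{2\log 8}}$ from \eqref{f31}, gives $C < 28.05$ (indeed $C\approx 28.041$), which matches the constant in the statement.

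To remove the conditioning, I decompose
$$
\Prob(X > t) \leq \Prob(X > t\mid A)\,\Prob(A) + \Prob(A^c) \leq \tfrac{\delta}{1-\delta} + \delta.
$$
For $\delta \leq 1/2$ one has $\delta/(1-\delta)\leq 2\delta$, so the right-hand side is at most $3\delta$ and taking complements yields \eqref{eq:cor_main_prob}. For $\delta>1/2$ the bound $1-3\delta<0$ renders the claim vacuous. There is no real obstacle here: Theorem~\ref{thm9} already contains all the probabilistic substance, and the present step is a routine Markov-plus-union-bound argument; the only thing to monitor is the explicit numerical value of the constant $C$.
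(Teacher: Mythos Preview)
Your proof is correct and follows essentially the same route as the paper: conditional Markov on the matrix-concentration event, the choice $t = (C/\delta)M$ yielding $\delta/(1-\delta)\le 2\delta$ for $\delta\le 1/2$, and then a union-type decomposition to remove the conditioning. The only cosmetic difference is labeling---the paper calls your event $A$ by the letter $B$ and writes the decomposition as $\Prob(A^\complement)=\Prob(A^\complement\cap B)+\Prob(A^\complement\cap B^\complement)$---but the argument and constants are identical.
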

\begin{proof}
We define the events
\begin{align*}
A&:=\Bigg\{\bX\;\colon\sup\limits_{\|f\|_{H(K)}\leq 1} \|f-S^m_\bX f\|^2_{L_2(D,\varrho_D)}\leq t\Bigg\}\,,\\
B&:=\{\bX\;\colon \|\mathbf{H}_m-\mathbf{I}_m\| \leq 1/2\}
\end{align*}
and split up
$$
\Prob(A)
=1- \Prob(A^\complement)=1-\Prob(A^\complement \cap B)-\Prob(A^\complement\cap B^\complement)\,.
$$
Treating each summand separately, we have
\begin{align*}
  \Prob(A^\complement\cap B)&=   \Prob(A^\complement| B)\Prob(B)\le \Prob(A^\complement| B)\,,\\
\Prob(A^\complement\cap B^\complement)&\le \Prob(B^\complement)\le \delta\,.
\end{align*}
Next we estimate $\Prob(A^\complement| B)$ using the Markov inequality \eqref{eq:markov_cond}, 
Theorem~\ref{thm9},
and setting 
$$
t:=\frac{3+8\,C_{\mathrm{R}}^2+4\,C_{\mathrm{R}}}{\delta}\,\max\left\{\sigma_m^2,\frac{\log (8n)}{n}T(m)\right\}
$$
which yields
$$
\Prob(A^\complement| B)\le \frac{\Ept(A^\complement| B)}{t}\le \frac{\delta}{1-\delta}\overset{\delta\leq 1/2}{\le} 2\delta
$$
and the assertion follows.
\end{proof}

\begin{example}\label{ex:legendre}
Theorem~\ref{thm9} as well as Corollary~\ref{cor10} can also be applied to non-bounded orthonormal systems which may lead to non-optimal error bounds.
For instance, let $D=[-1,1]$ and $\varrho_D$ the normalized Lebesgue measure on $D = [-1,1]$. Then
the second order operator $\operatorname{A}$ defined by
$$\operatorname{A}\!f(x)=-((1-x^2) v')'$$
characterizes for $s>1$ weighted Sobolev spaces
$$H(K_s):=\{f\in L_2(D)\colon \operatorname{A}^{s/2}\!f\in L_2(D)\}$$
which are in fact reproducing kernel Hilbert spaces with reproducing kernel
$$
K_s(x,y)=\sum_{k\in\N}(1+(k(k+1))^s)^{-1}\cP_k(x)\cP_k(y),
$$
where $\cP_k\colon D\to\R$, $k\in\N$, are $L_2(D)$-normalized Legendre polynomials.
Clearly, $(\cP_k)_{k\in\N}$ provides an ONB in $L_2(D)$ and plays the role of $(\eta_k)_{k\in\N}$ in our setting. Moreover, we have $e_k=\lambda_k\eta_k=(1+(k(k+1))^s)^{-1/2}\cP_k$, and, accordingly, $\sigma_k=(1+(k(k+1))^s)^{-1/2}$. The two quantities $N(m)$ and $T(m)$ are given by
\begin{align*}
N(m)&=\sup_{x\in D}\sum_{k=1}^{m-1} |\cP_k(x)|^2=\sum_{k=0}^{m-2}\frac{2k+1}{2}=\frac{(m-1)^2}{2},\\
T(m)&=\sup_{x\in D}\sum_{k=m}^\infty\frac{2k-1}{2(1+(k(k+1))^s}\asymp \sum_{k=m}^\infty k^{-2s+1} \asymp m^{-2s+2}\,.
\end{align*}
Applying Theorem~\ref{thm9} or Corollary~\ref{cor10} and choosing $m$ as large as possible, leads to the relation $m^2\sim n/\log{n}$, which is far from optimal with respect to the number $n$ of used samples, i.e., we observe worst case error estimates of the form
$$
\sup\limits_{\|f\|_{{H(K_s)}}\leq 1} \|f-S^m_\bX f\|_{L_2(D)}
  \lesssim \sigma_m \sim m^{-s}\lesssim \left(\frac{\log(n)}{n}\right)^{s/2}
$$
in expectation and with high probability, respectively.

On the other hand, the result by Bernardi and Maday \cite[Thm. 6.2]{BeMa92} using a polynomial interpolation operator $j_{n-1}$ at Gauss points guarantees 
$$
\sup\limits_{\|f\|_{{H(K_s)}}\leq 1} \|f-j_{n-1} f\|_{L_2(D)}
  \lesssim \frac{\log(n)}{n^s}\,,
$$
which is optimal in its main rate $s$. However, as it turns out below (see Example \ref{leg_2}), it is not optimal with respect to the power of the logarithm. 
\qed
\end{example}
Example \ref{ex:legendre} illustrates that the suggested approach will lead to worst case error estimates with high probability. However, the achieved upper bounds are not optimal in specific situations. We will overcome this limitation in the next section by drawing the sampling nodes with respect to a weighted, tailored distribution and apply a weighted least squares algorithm.

\subsection{Improvements due to importance sampling}
\label{Sect5_opt}
We are interested in the question of optimal sampling recovery of functions from reproducing kernel Hilbert spaces in $L_2(D,\varrho_D)$. 
The goal is to get reasonable bounds in $n$, preferably in terms of the singular numbers of the embedding. Theorem \ref{thm9} already gives a satisfying answer in case of bounded kernels and $N(m) \in \mathcal{O}(m)$. In order to drop both conditions, we will use a weighted (deterministic) least squares algorithm (see Algorithm 2) to recover functions $f\in H(K)$ from samples at random nodes (``random information'' in the sense of \cite{HiKrNoPrUl19}). The approach is a slight modification of the one proposed earlier in \cite{KrUl19}. A technique will be used, which is known as ``(optimal) importance sampling'', where one defines a density function depending on the spectral properties of the embedding operator. The sampling nodes are then drawn according to this density. In the Monte-Carlo setting (or ``randomized setting'') this has been successfully applied, e.g., by Cohen and Migliorati in \cite{CoMi17}, see Remark \ref{imp_samp} below. Also in connection with compressed sensing it led to substantial improvements when recovering multivariate functions, see \cite{RaWa16, RaWa12}. Authors originally applied this technique e.g.\ for the approximation of integrals, see~\cite{Hi10}. However, the setting in which we are interested in requires additional work since the sampling nodes are supposed to be drawn in advance for the whole class of functions. 

\begin{algorithm}[tb]
\caption{Weighted least squares regression.}\label{algo1:reweighted}
  \begin{tabular}{p{1.2cm}p{4.5cm}p{8.9cm}}
    Input: & $\bX = \{\bx^1,...,\bx^n\}\subset D$ \hfill & set of distinct sampling nodes, \\
      & $\mathbf{f} = (f(\bx^1),...,f(\bx^n))^\top$ \hfill & samples of $f$ evaluated at the nodes from $\bX$, \\
      & $m\in\N$ & $m < n$ such that the matrix $\tilde{\bL}_m$ in \eqref{eq:tilde_L} has full (column) rank.
  \end{tabular}
  \begin{algorithmic}
	\STATE
      Compute reweighted samples $\boldsymbol{g}:=(g_k)_{k=1}^n$ with $g_k:=\begin{cases}0,  & \varrho_m(\bx^k)=0,\\
      f(\bx^k)/\sqrt{\varrho_m(\bx^k)}, & \varrho_m(\bx^k)\neq 0.
       \end{cases}$
  
  	\STATE
  	Solve the over-determined linear system 
  	\begin{equation}
  	\widetilde{\bL}_m \, (\tilde{c}_1,...,\tilde{c}_{m-1})^\top = \mathbf{g}\,, \; \widetilde{\bL}_m:=\left(l_{k,j}\right)_{k=1,j=1}^{n,m-1},\; l_{k,j}:=\begin{cases}0,  & \varrho_m(\bx^k)=0,\\
  	      \eta_j(\bx^k)/\sqrt{\varrho_m(\bx^k)}, & \varrho_m(\bx^k)\neq 0,
  	       \end{cases}
  	       \label{eq:tilde_L}
  	\end{equation}
  	via least squares (e.g.\ directly or via the LSQR algorithm \cite{PaSa82}), i.e., compute
  	$$
  	(\tilde{c}_1,...,\tilde{c}_{m-1})^\top := (\widetilde{\bL}_m^{\ast}\,\widetilde{\bL}_m)^{-1} \,\widetilde{\bL}_m^{\ast}\, \mathbf{g}.
  	$$
  \end{algorithmic}
   Output:  $\mathbf{\tilde{c}} = (\tilde{c}_1,...,\tilde{c}_{m-1})^\top\in \C^{m-1}$ coefficients of the approximant $\widetilde{S}_{\bX}^m f:=\sum_{j = 1}^{m-1} \tilde{c}_j \, \eta_j$.
\end{algorithm}

As already mentioned, we construct a more suitable distribution which is used to draw the sampling nodes at random.
In particular, we tailor a probability density function $\varrho_m\colon D\to\C$ such that $\mu_m$, which is given by
\begin{equation}
\mu_m(A):=\int_A\varrho_m(\bx)\varrho_D(\mathrm{d}\bx)\quad,\quad A\subset D\mbox{ measurable }\,.\label{eq:def_mu_m}
\end{equation}
Then we may draw the sampling nodes in $\bX\subset D$ i.i.d.\ at random according to $\mu_m$.
For the chosen set $\bX$, we define the approximation operator $\widetilde{S}^m_\bX$ as indicated in Algorithm~\ref{algo1:reweighted}.

Choosing the specific density function
\begin{equation}\label{density1}
	\varrho_m(\bx) := \frac{1}{2}\Bigg(\frac{1}{m-1}\sum\limits_{j=1}^{m-1} |\eta_j(\bx)|^2 + 
	\Bigg(\sum\limits_{j=m}^{\infty} \lambda_j\Bigg)^{-1}\Bigg(K(\bx,\bx) - \sum\limits_{j=1}^{m-1}|e_j(\bx)|^2\Bigg)\Bigg)
\end{equation}
guarantees worst case error estimates which are optimal up to logarithmic factors and up to a specific failure probability.

\begin{theorem}\label{sampling_numbers} Let $H(K)$ be a separable reproducing kernel Hilbert space of complex-valued functions defined on $D$ such that 
$$
\int_{D} K(\bx,\bx) \, \varrho_D(\mathrm{d}\bx) < \infty$$
for some non-trivial $\sigma$-finite measure $\varrho_D$ on $D$, where $(\sigma_j)_{j\in\N}$ denotes the non-increasing sequence of singular numbers of the embedding $\Id:H(K) \to L_2(D,\varrho_D)$.
Let further $\delta\in(0,1/3)$ and $n\in \N$ large enough, such that
\begin{equation} \label{choice_m2}
    m := \left\lfloor\frac{n}{96(\sqrt{2}\log(2n)-\log\delta)}\right\rfloor\ge 2
\end{equation}
holds.
Moreover, we assume $\varrho_m\colon D\to\C$ and $\mu_m$ as stated in~\eqref{density1} and~\eqref{eq:def_mu_m}.
We draw each node in $\bX:=\{\bx^1,\ldots,\bx^n\}\subset D$ i.i.d.\ at random according to $\mu_m$, which yields
\begin{equation}  \Prob\Bigg(\sup\limits_{\|f\|_{{H(K)}}\leq 1} \|f-\widetilde{S}^m_\bX f\|^2_{L_2(D,\varrho_D)}
  \leq \frac{C}{\delta}\,\max\Big\{\sigma^2_m,\frac{\log(8n)}{n}\sum_{j=m}^\infty\sigma_j^2	
 \Big\}\Bigg) \geq 1-3\delta\,,
\label{eq:thm_main_prob_general}
\end{equation}
where $C:=3+16\,C_{\mathrm{R}}^2+4\sqrt{2}\,C_{\mathrm{R}}<49.5$ is an absolute constant.
\end{theorem}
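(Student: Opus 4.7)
The plan is to reduce Theorem~\ref{sampling_numbers} to Corollary~\ref{cor10} by absorbing the importance-sampling density $\varrho_m$ into the kernel. On the positivity set $D_+:=\{\bx\in D\colon \varrho_m(\bx)>0\}$ I introduce the reweighted kernel $\widetilde K(\bx,\by):=K(\bx,\by)/\sqrt{\varrho_m(\bx)\varrho_m(\by)}$ together with the natural isometry $U\colon H(K)\to H(\widetilde K)$, $(Uf)(\bx):=f(\bx)/\sqrt{\varrho_m(\bx)}$. The change of measure $\|Uf\|_{L_2(D,\mu_m)}^2 = \int_{D_+}\tfrac{|f(\bx)|^2}{\varrho_m(\bx)}\varrho_m(\bx)\,\varrho_D(\mathrm d\bx) = \|f\|_{L_2(D,\varrho_D)}^2$ shows that the embedding $H(\widetilde K)\hookrightarrow L_2(D,\mu_m)$ has the same singular numbers $(\sigma_j)_{j\in\N}$, with right singular vectors $\tilde e_j = Ue_j$ and associated orthonormal system $\tilde\eta_j = \eta_j/\sqrt{\varrho_m}$. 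Moreover Algorithm~\ref{algo1:reweighted} applied to $f$ at $\mu_m$-distributed i.i.d.\ nodes is nothing but Algorithm~\ref{algo1} applied to $Uf$ at the same nodes in the reweighted setting, so the two worst-case errors agree.

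The crucial observation is that the two summands in~\eqref{density1} have been tailored to control precisely the quantities $N(m)$ and $T(m)$ in the reweighted system. From $\varrho_m\geq \tfrac{1}{2(m-1)}\sum_{j=1}^{m-1}|\eta_j|^2$ one obtains $\widetilde N(m) = \sup_{\bx\in D_+}\tfrac{1}{\varrho_m(\bx)}\sum_{j=1}^{m-1}|\eta_j(\bx)|^2 \leq 2(m-1)$. The second summand combined with the Mercer-type identity $K(\bx,\bx) = \sum_{j=1}^\infty |e_j(\bx)|^2$ (pointwise on $D_+$, justified by separability of $H(K)$ and $\sigma$-finiteness of $\varrho_D$, cf.\ Remark~\ref{injective}) then yields $\widetilde T(m) \leq 2\sum_{j=m}^\infty\sigma_j^2$. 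Together these estimates also give $\sup_{\bx\in D_+}\widetilde K(\bx,\bx) \leq 2(m-1)\lambda_1 + 2\sum_{j\geq m}\lambda_j <\infty$, so the boundedness hypothesis of Corollary~\ref{cor10} is in force for $\widetilde K$. The choice of $m$ in~\eqref{choice_m2} is precisely what turns the hypothesis~\eqref{f1} for the reweighted system into the valid inequality $2(m-1) \leq n/(48(\sqrt 2\log(2n)-\log\delta))$.

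It then remains to invoke Corollary~\ref{cor10} for $H(\widetilde K)\hookrightarrow L_2(D,\mu_m)$ and to substitute the above estimates into~\eqref{eq:Ept_sum_detail}. Replacing $T(m)$ by $2\sum_{j\geq m}\sigma_j^2$ multiplies the $C_{\mathrm R}^2$-term by $2$ and the cross term $\sigma_m\sqrt{\cdot}$ by $\sqrt 2$, so the constant $3+8C_{\mathrm R}^2+4C_{\mathrm R}$ of Corollary~\ref{cor10} becomes $3+16 C_{\mathrm R}^2+4\sqrt 2\, C_{\mathrm R} < 49.5$, which is exactly the constant appearing in~\eqref{eq:thm_main_prob_general}. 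The main technical obstacle is the careful bookkeeping on the null set $D\setminus D_+$: one must verify that both $f$ and every $e_j$ vanish there, which follows because the first summand in~\eqref{density1} forces $\eta_j(\bx)=0$ for $j=1,\dots,m-1$ and the second then forces $K(\bx,\bx)=0$, so $f(\bx)=0$ via $\|K(\cdot,\bx)\|_{H(K)}^2=K(\bx,\bx)$. This is exactly why the theorem is stated under separability of $H(K)$ and $\sigma$-finiteness of $\varrho_D$ rather than under the boundedness hypothesis~\eqref{CK000} of Corollary~\ref{cor10}: these are precisely the assumptions that make equality hold in~\eqref{C_K} and supply the pointwise Mercer identity needed above without assuming the kernel is bounded.
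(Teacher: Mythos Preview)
Your approach is essentially the paper's: introduce the reweighted kernel $\widetilde K_m$, observe that the embedding $H(\widetilde K_m)\hookrightarrow L_2(D,\mu_m)$ has the same singular numbers, compute $\widetilde N(m)\le 2(m-1)$ and $\widetilde T(m)\le 2\sum_{j\ge m}\sigma_j^2$, and then apply Corollary~\ref{cor10} with the refined choice of $t$ from~\eqref{eq:Ept_sum_detail} to pick up the extra factors $2$ and $\sqrt 2$ in the constant. The algorithm correspondence and the treatment of the null set $\{\varrho_m=0\}$ are also handled as in the paper.

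There is one inaccuracy you should fix. You justify the bound on $\widetilde T(m)$ via a pointwise Mercer identity $K(\bx,\bx)=\sum_{j\ge 1}|e_j(\bx)|^2$ and claim this follows from separability of $H(K)$ and $\sigma$-finiteness of $\varrho_D$, citing Remark~\ref{injective}. That remark says the opposite: separability and $\sigma$-finiteness only yield equality in the \emph{integral} identity~\eqref{C_K}; the pointwise identity~\eqref{ptwise} requires additional hypotheses (continuous bounded kernel, Borel measure with full support). Fortunately you do not need equality. The Bessel inequality~\eqref{K(x,x)} gives $K(\bx,\bx)-\sum_{j=1}^{m-1}|e_j(\bx)|^2 \ge \sum_{j\ge m}|e_j(\bx)|^2$, and this is all that is required to bound the ratio in $\widetilde T(m)$ by $1$; this is exactly how the paper argues in~\eqref{tildeT(m)}. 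Separability and $\sigma$-finiteness are used elsewhere, namely to ensure $\int_D\varrho_m\,\mathrm d\varrho_D=1$ via equality in~\eqref{C_K}, a point you left implicit.
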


\begin{proof} We emphasize that the 
second argument of the $\max$ term in \eqref{eq:thm_main_prob_general}
makes sense since we know from~\eqref{C_K} that the sequence of singular numbers is square-summable.  As a technical modification of the density function, which has been presented in \cite{KrUl19}, we use the density $\varrho_m \colon D \to \R$ as stated in \eqref{density1}.
As above, the family $(e_j(\cdot))_{j\in \N}$ represents the eigenvectors of the non-vanishing eigenvalues of the compact self-adjoint operator $W_{\varrho_D}:=\Id^*\circ\Id : H(K) \to H(K)$, the sequence $(\lambda_j)_{j\in \N}$ represents the ordered eigenvalues, and finally $\eta_j:=\lambda_j^{-1/2} e_j$. Since we assume the separability of $H(K)$ and the $\sigma$-finiteness of $\varrho_D$ we observe equality in \eqref{C_K}, cf.\ \cite[Thm.\ 4.27]{StChr08}, and thus
we easily see that $\int_D \varrho_m(\bx) \, \varrho_D(\mathrm{d}\bx) = 1$.
Let us define a family of kernels $\widetilde{K}_m(\bx,\by)$, indexed by $m\in \N$, via
\begin{equation}\label{def_Km}
	\widetilde{K}_m(\bx,\by) := \frac{K(\bx,\by)}{\sqrt{\varrho_m(\bx)\varrho_m(\by)}} \,,
\end{equation}	
and a new measure $\mu_m$ as stated in \eqref{eq:def_mu_m}
with the corresponding weighted space $L_2(D,\mu_m)$\,. Clearly, $\widetilde{K}_m(\cdot,\cdot)$ is a positive type function. As a consequence of 
$$
	|K(\bx,\by)| \leq \sqrt{K(\bx,\bx)}\cdot \sqrt{K(\by,\by)}\,,\quad \bx,\by \in D\,,
$$
we obtain by an elementary calculation that with a constant $c_m>0$ it holds
\begin{equation}\label{bounded}
	|K(\bx,\by)| \leq c_m \sqrt{\varrho_m(\bx)}\cdot\sqrt{\varrho_m(\by)}\,.
\end{equation}
Indeed, 
\begin{equation*}
  \begin{split}
	K(\bx,\bx) &= \sum\limits_{j=1}^{m-1} |e_j(\bx)|^2 + \Bigg(K(\bx,\bx) - \sum\limits_{j=1}^{m-1} |e_j(\bx)|^2\Bigg)\\
	&= \sum\limits_{j=1}^{m-1} \lambda_j|\eta_j(\bx)|^2 + \Bigg(\sum\limits_{j=m}^{\infty}\lambda_j\Bigg)\cdot\Bigg(\sum\limits_{j=m}^{\infty}\lambda_j\Bigg)^{-1}\Bigg(K(\bx,\bx) - \sum\limits_{j=1}^{m-1} |e_j(\bx)|^2\Bigg)\\
	&\leq c_m \varrho_m(\bx)
\end{split}
\end{equation*}
with 
$$
	c_m := 2\max\Bigg\{\lambda_1(m-1),\sum\limits_{j=m}^\infty\lambda_j\Bigg\}\,.
$$
Hence, if $\varrho_m(\bx)$ or $\varrho_m(\by)$ happens to be zero then we may put $\widetilde{K}_m(\bx,\by) := 0$ in \eqref{def_Km}. 
In any case, due to \eqref{bounded}, the kernel $\widetilde{K}_m(\bx,\by)$ fits the requirements in Theorem \ref{thm9}. In fact, in Theorem \ref{thm9} it is necessary that $\widetilde{N}(m)$ and $\widetilde{T}(m)$ are well-defined and that we have access to function values in order to create the matrices $\widetilde{\bL}_m$ and take the function values $f(\bx^k)$, $k=1,...,n$. Let us discuss the quantities $\widetilde{N}(m)$ and $\widetilde{T}(m)$ appearing in Theorem \ref{thm9} for this new kernel $\widetilde{K}_m(\bx,\by)$ first. It is clear, that the embedding $\Id:H(\widetilde{K}_m) \to L_2(D,\mu_m)$ shares the same spectral properties as the original embedding. Note that a function $g$ belongs to $H(\widetilde{K}_m)$ if and only if $g(\cdot) = f(\cdot)/\sqrt{\varrho_m(\cdot)}$, $f\in H(K)$, where we always put $0/0:=0$. Clearly, as a consequence of \eqref{bounded} and \eqref{repr} below (together with a density argument), we have that $\varrho(\bx) = 0$ implies $f(\bx) = 0$ for all $f\in H(K)$. 
Moreover, whenever $\|f\|_{H(K)} \leq 1$, the function $g:=f/\sqrt{\varrho_m}$ satisfies 
$\|g\|_{H(\widetilde{K}_m)}\leq 1$. Indeed, let 
\begin{equation}\label{repr}
	f(\cdot) = \sum_{i=1}^N \alpha_i K(\cdot,\bx^i)\,.
\end{equation}
 Then
$\langle f,f\rangle_{H(K)} = \sum_{j=1}^N\sum_{i=1}^N 
\alpha_i \overline{\alpha_j}K(\bx^j,\bx^i)$. We have 
\begin{equation*}
  \begin{split}
	g &= f(\cdot)/\sqrt{\varrho_m(\cdot)} = \sum_{i=1}^N \alpha_i K(\cdot,\bx^i)/\sqrt{\varrho_m(\cdot)}\\
	& = \sum_{i=1}^N \alpha_i \sqrt{\varrho_m(\bx^i)}
	\frac{K(\cdot,\bx^i)}{\sqrt{\varrho_m(\cdot)}\sqrt{\varrho_m(\bx^i)}}\,.
\end{split}
\end{equation*}
This implies 
\begin{equation*}
  \begin{split}
	\langle g,g\rangle_{H(\widetilde{K}_m)} &= 
	\sum_{j=1}^N\sum_{i=1}^N \alpha_i \overline{\alpha_j}\sqrt{\varrho_m(\bx^i)}\sqrt{\varrho_m(\bx^j)}
	\frac{K(\bx^j,\bx^i)}{\sqrt{\varrho_m(\bx^j)}\sqrt{\varrho_m(\bx^i)}}\\
	&= \langle f,f\rangle_{H(K)}\,.
 \end{split}
\end{equation*}
What remains is a standard density argument. 
The singular numbers of the new embedding remain the same. The singular vectors $\tilde{e}_k(\cdot)$ and $\tilde{\eta}_k(\cdot)$ are slightly different. They are the original ones divided by $\sqrt{\varrho_m(\cdot)}$. In fact, 
$$
	\widetilde{N}(m) := \sup\limits_{\bx \in D}\sum\limits_{k=1}^{m-1} |\eta_k(\bx)|^2/\varrho_m(\bx) \leq 
	2(m-1)\sup\limits_{\bx \in D}\sum\limits_{k=1}^{m-1} |\eta_k(\bx)|^2/\sum\limits_{j=1}^{m-1} |\eta_j(\bx)|^2 = 2(m-1)\,.
$$
Furthermore, taking \eqref{K(x,x)} into account, we find
\begin{equation}\label{tildeT(m)}
  \begin{split}
	\widetilde{T}(m) &:= \sup\limits_{\bx \in D}\sum\limits_{k=m}^{\infty} |e_k(\bx)|^2/\varrho_m(\bx)\\ 
	&\leq 
	2\Bigg(\sum\limits_{j=m}^{\infty} \lambda_j\Bigg)\sup\limits_{\bx \in D}\sum\limits_{k=m}^\infty |e_k(\bx)|^2/\Bigg(K(\bx,\bx)-\sum\limits_{j=1}^{m-1}|e_j(\bx)|^2\Bigg)\\
	&\leq 2\Bigg(\sum\limits_{j=m}^{\infty} \lambda_j\Bigg)\sup\limits_{\bx \in D}\sum\limits_{k=m}^\infty |e_k(\bx)|^2/\sum\limits_{j=m}^{\infty}|e_j(\bx)|^2\\
	&\leq 2\sum\limits_{j=m}^{\infty} \lambda_j = 2\sum\limits_{j=m}^{\infty} \sigma_j^2\,.
 \end{split}
\end{equation}

In order to define the new reconstruction operator $\widetilde{S}_{\bX}^m$ we need to create the matrices $\widetilde{\bL}_m$ using the new function system $\tilde{\eta}_k$ and take function evaluations $f(\bx^1),...,f(\bx^n)$.
In more detail, we solve the least squares problem
\begin{equation*}%
\widetilde{\bL}_m{\mathbf{\tilde{c}}}=\boldsymbol{g}, \;
\text{where }
\widetilde{\bL}_m:=\left(\tilde\eta_j(\bx^k)\right)_{k=1,j=1}^{n,m-1}, \;
\boldsymbol{g}:=\left(\frac{f(\bx^1)}{\sqrt{\varrho_m(\bx^1)}},\ldots,\frac{f(\bx^n)}{\sqrt{\varrho_m(\bx^n)}}\right)^\top,
\end{equation*}
and the vector~$\mathbf{\tilde{c}}$ contains the coefficients of the least squares approximation
$S_{\bX}^m(g)=\sum_{j=1}^{m-1}  \tilde{c}_j\tilde{\eta}_j$ of $g:=f/\sqrt{\varrho_m}$. 
This leads to Algorithm~\ref{algo1:reweighted}.
Consequently, Theorem~\ref{thm9} allows to estimate the error
\begin{equation*}%
\|g-S_{\bX}^m(g)\|_{L_2(D,\mu_m)}=\|f-\sqrt{\varrho_m}\,S_{\bX}^m(g)\|_{L_2(D,\varrho_D)}=
\|f-\widetilde{S}^m_\bX f\|_{L_2(D,\varrho_D)}\,,
\end{equation*}
where $\widetilde{S}^m_\bX f:=\sqrt{\varrho_m}\,S_{\bX}^m(g)=\sum_{j=1}^{m-1} \tilde{c}_j\eta_j(\bx)$.

We stress that $\widetilde{S}^{m}_\bX f$ and the direct computation of $S^{m}_\bX f$ using $\bL_m\, \mathbf{c} = \mathbf{f}$ may not coincide since both are based on different least squares problems in general.

It remains to note that for fixed $n$ and $m$ as in \eqref{choice_m2} we have for $\bX=(\bx^1,...,\bx^n)$ the relation
\begin{equation*}
  \begin{split}
	\sup\limits_{\|f\|_{ H(K)} \leq 1}\|f-\widetilde{S}^m_{\bX}(f)\|^2_{L_2(D,\varrho_D)}
	&= \sup\limits_{\|f\|_{ H(K)} \leq 1}\|f/\sqrt{\varrho_m} - S^m_{\bX}(f/\sqrt{\varrho_m})\|^2_{L_2(D,\mu_m)}\\
	& \leq
	\sup\limits_{\|g\|_{ H(\widetilde{K}_m)} \leq 1}\|g - S^m_{\bX}(g)\|^2_{L_2(D,\mu_m)}\,.
  \end{split}
\end{equation*}
Applying a slight modification of Corollary~\ref{cor10}, i.e., setting 
\begin{align*}t:=\frac{1}{\delta}\left(3\sigma_m^2+8\,C_{\mathrm{R}}^2\frac{\log (8n)}{n}\widetilde{T}(m)+4\,C_{\mathrm{R}}\,\sigma_m\,\sqrt{\frac{\log (8n)}{n}\widetilde{T}(m)}\right)
\end{align*}
in the proof, yields
\begin{equation*}
\begin{split}
\Prob\Bigg(\sup\limits_{\|f\|_{{H(K)}}\leq 1} \|f-\widetilde{S}^m_\bX f\|^2_{L_2(D,\varrho_D)}
  \leq \frac{C}{\delta}\max\left(\sigma_m^2,\frac{\log(8n)}{n}\sum_{j=m}^\infty\sigma_j^2\right)
\Bigg) \geq 1-3\delta\,,
\end{split}
\end{equation*}
where we took \eqref{tildeT(m)} into account and $C=3+16C_R^2+4\sqrt{2}C_R$ is as stated above.
\end{proof}

\begin{remark}\label{DavMar} {\em (i)}  In order to prove the theorem in full generality for separable RKHS, we use a technical modification of the density function presented in \cite{KrUl19}. Clearly, as a consequence of \eqref{K(x,x)} the function $\varrho_m$ is positive and defined pointwise for any $\bx \in D$. Moreover, it can be computed precisely from the knowledge of $K(\bx,\bx)$ and the first $m-1$ eigenvalues and corresponding eigenvectors. The pointwise defined density function will be an essential ingredient for drawing the nodes in $\bX$ on the one hand and performing a reweighted least squares fit on the other hand. Note that also point evaluations of the density function are used in the algorithm. To circumvent the lacking injectivity we introduce a new reproducing kernel Hilbert space $H(\widetilde{K}_m)$ built upon the modified density function. To this situation Corollary~\ref{cor10} is applied, and we obtain Theorem~\ref{sampling_numbers}, which also improves the results in \cite{KrUl19} by determining explicit constants. In turn, we show, that the original algorithm proposed by \cite{KrUl19} also works in this more general situation since both densities are equal almost everywhere. 

{\em (ii)} The situation of a non-separable RKHS $H(K)$ is not considered in Theorem \ref{sampling_numbers}. It has been considered in the subsequent paper \cite{MoUl20} by Moeller and T. Ullrich. Here the sampling density has to be modified even further to get a bound 
$$
	\sup\limits_{\|f\|_{H(K)} \leq 1} \|f-\widetilde{S}^m_{\bX}f\|^2_{L_2(D,\varrho_D)} \leq c_1 \max\Big\{\frac{1}{n},\frac{1}{m}\sum\limits_{k\geq m/2} \sigma_k^2\Big\}
$$
with probability larger than $1-c_2n^{1-r}$ and $m:=\lfloor n/(c_3r\log(n)) \rfloor$. With this method we can not get beyond the rate $n^{-1/2}$ in case of non-separable Hilbert spaces $H(K)$. 

{\em (iii)} While this paper was under review, the statement ``recovery with high probability'' has been refined by M. Ullrich \cite{M_Ul20} and, independently, by Moeller, T. Ullrich \cite{MoUl20}. In fact, the failure probability of the above recovery can be controlled by $n^{-r}$ (and is therefore decaying polynomially in the number of samples) by only paying a multiplicative constant $r$ in the bounds, see also (ii). 

{\em (iv)} As a further follow up, Nagel, Sch\"afer and T. Ullrich \cite{NSU20} developed a subsampling technique to select a subset $\mathbf{J} \subset \bX$ of $\mathcal{O}(m)$ nodes out of the randomly chosen node set $\bX$ such that the corresponding least squares recovery operator $\widetilde{S}^m_{\mathbf{J}}$ gives
$$
	\sup\limits_{\|f\|_{H(K)} \leq 1} \|f-\widetilde{S}^m_{\mathbf{J}}f\|^2_{L_2(D,\varrho_D)} \leq \frac{C\log m}{m}\sum\limits_{k=cm}^{\infty} \sigma_k^2
$$
with precisely determined universal constants $C,c>0$.
\end{remark}

\begin{example}\label{leg_2} {\em (i)} If we choose $m \asymp n/\log(n)$ according to \eqref{choice_m2} and assume a polynomial decay of the singular values, i.e., $\sigma_m\lesssim m^{-s}\log^\alpha m$ with $s>1/2$ (which is for instance the case for Sobolev embeddings into $L_2$), we find
$$
\sup\limits_{\|f\|_{{H(K_s)}}\leq 1} \|f-\widetilde{S}_{\mathbf{X}}^mf\|_{L_2(D)} \lesssim m^{-s}\log^{\alpha} m
\asymp n^{-s}\log^{\alpha+s} n\,.
$$
Accordingly, we observe the best possible main rate $-s$ with respect to the used number of samples $n$, i.e., we achieve optimality up to logarithmic factors. For a more specific example, see (ii) below and Section~\ref{mixed1}.

{\em (ii)} Let us reconsider the setting from Example~\ref{ex:legendre}.
Theorem~\ref{sampling_numbers} provides a highly improved sampling strategy compared to the one discussed in Example~\ref{ex:legendre}. Certainly, we change the underlying distribution for the random selection of the sampling nodes and we incorporate weights in the least squares algorithm, cf.\ Algorithm~\ref{algo1:reweighted}. However, this allows for a crucial improvement of the
relation of the maximal polynomial degree $m$ and the number $n$ of sampling nodes to $m\sim n/\log(n)$, which leads to estimates
$$
\sup\limits_{\|f\|_{{H(K_s)}}\leq 1} \|f-\widetilde{S}^m_\bX f\|_{L_2(D)}
  \lesssim \max\left\{\sigma_m,\sqrt{m^{-1}\sum_{k=m}^\infty\sigma_k^2}\right\} \asymp m^{-s}\lesssim \left(\frac{\log(n)}{n}\right)^{s}\,
$$
that are optimal in~$m$ and -- up to logarithmic factors -- even optimal in~$n$. Here $s>1/2$ is admitted. Compared to the results in Example~\ref{ex:legendre}, we obtain the same rates of convergence with respect to the polynomial degree~$m$ and significantly improved rates of convergence with respect to the number~$n$ of used sampling values.
Note, that instead of the density given in \eqref{density1} we may use the Chebychev measure given by the density $\varrho(\bx) = (\pi\sqrt{1-x^2})^{-1}$ on $D = [-1,1]$ since the $L_2$-normalized Legendre polynomials $\mathcal{P}_k(x)$ are dominated by $C(1-x^2)^{-1/4}$ for all $k \in \N_0$, see \cite{RaWa12}. Hence, in contrast to \eqref{density1}, this sampling measure is universal for all $m$. 

In comparison to the results in Example~\ref{ex:legendre}, we obtain a significantly better rate of convergence with respect to the number~$n$ of used sampling values. Applying the recent result mentioned in Remark \ref{DavMar},(iv) we obtain for the subsampled weighted least squares operator the worst-case error bound
$$
\sup\limits_{\|f\|_{{H(K_s)}}\leq 1} \|f-\widetilde{S}_{\mathbf{J}}^mf\|_{L_2(D)}
  \lesssim \frac{\sqrt{\log(n)}}{n^s}\,.
$$
Note, that $\widetilde{S}_{\mathbf{J}}^mf$ uses $n \in \mathcal{O}(m)$ many samples of $f$. 
\qed
\end{example}

\subsection{The power of standard information and tractability}
\label{tract}
The approximation framework studied in this paper has been first considered by Wasilkowski, Wo{\'z}niakowski in 2001, see \cite{WaWo01}. After that many authors dealt with the problem on how well can we approximate functions from RKHS by only using function values. For further historical remarks see \cite{NoWoIII} and \cite[Rem.\ 1]{NSU20}. 

In this context, sampling values are called \emph{standard information} abbreviated by $\Lambda^{\rm std}$.
In contrast to this, one may also allow linear information (abbreviated by $\Lambda^{\rm all}$), which means that the approximation is computed from a number of information about the function coming from arbitrary linear functionals.
Clearly, $\Lambda^{\rm std}$ is a subset of $\Lambda^{\rm all}$ due to \eqref{eq:eval_inner_product}.
It is well known that the best possible worst case error with respect to 
$m$ information coming from $\Lambda^{\rm all}$ can be achieved by approximating functions by means of their corresponding exact Fourier partial sum within $\operatorname{span}\{\eta_j\colon j=1,\ldots,m\}$. Then the corresponding worst case error is determined by $\sigma_{m+1}$, see \cite[Cor.\ 4.12]{NoWoI}, which establishes a natural lower bound on the worst case error with respect to $\Lambda^{\rm std}$.
One crucial question in IBC is to determine whether or not standard information 
$\Lambda^{\rm std}$ is as powerful as linear information $\Lambda^{\rm all}$. In this context, ``power'' is usually identified with the order of convergence of the considered error with respect to the number of used information.

From Theorem \ref{sampling_numbers}, see also Example \ref{leg_2},(i), we obtain that the polynomial order of convergence for standard information is the same as for linear information if the kernel has a finite trace (i.e., $\Id:H(K)\to L_2(D,\varrho_D)$ is a Hilbert-Schmidt operator).
This problem has been addressed in \cite{KWW09}, \cite[Open Problem 1]{NoWo11} and \cite[Open Problem 126]{NoWoIII}. In \cite{KrUl19} this observation has been already made for the situation that the embedding operator is injective (such that the eigenvectors of the strictly positive eigenvalues form an orthonormal basis in $H(K)$, see Remark \ref{injective} above). The contribution of Theorem \ref{sampling_numbers} is to get explicitly determined constants on the one hand. On the other hand, it shows that separability of the RKHS and a finite trace condition is essentially enough for this purpose. Note that the finite trace condition can not be avoided,  see~\cite{HiNoVy08}. 

We will further discuss some consequences for the tractability of this problem. For the necessary notions and definitions from the field of Information Based Complexity, see \cite{NoWoI,NoWoII,NoWoIII}. We comment on polynomial tractability with respect to linear information $\Lambda^{\rm all}$ and standard information $\Lambda^{\rm std}$. Let us consider the family of approximation problems 
$$
	{\operatorname{APP}}_d:H(K_d) \to L_2(D_d,\varrho_{D_d})\,,\quad d\in \N\,,
$$
where $K_d:D_d\times D_d \to \C$ is a family of reproducing kernels. In \cite[Thm.\ 5.1]{NoWoI} {\em strong polynomial tractability} of the family $\{\operatorname{APP}_d\}$ with respect to $\Lambda^{\text{all}}$ is characterized as follows: There is a $\tau>0$ such that 
\begin{equation}\label{pt}
	C:=\sup_d \Bigg(\sum\limits_{j=1}^{\infty} \sigma_{j,d}^{\tau}\Bigg)^{1/\tau} < \infty\,,
\end{equation}
where $\sigma_{j,d}$, $j=1,\ldots$, are the singular values belonging to  ${\operatorname{APP}}_d$ for fixed $d$.
From our analysis in Theorem \ref{sampling_numbers} above we directly obtain a sufficient condition for polynomial tractability with respect to $\Lambda^{\text{std}}$. 
Without going into detail, we denote by
$n^{\rm wor}(\varepsilon,d;\Lambda)$, $\Lambda\in\{\Lambda^{\rm  std},\Lambda^{\rm  all}\}$,
the minimal number $n$ of information out of the specified class $\Lambda$ any algorithm requires in order to achieve a worst case error $\varepsilon$ for the $d$-dimensional approximation problem.
Certainly, $n^{\rm wor}(\varepsilon,d;\Lambda)$ usually depends on $\varepsilon$, the dimension~$d$ and $\Lambda$. At this point, we would like to mention that polynomial tractability means that $n^{\rm wor}(\varepsilon,d;\Lambda)\le C\,\varepsilon^{-p}\,d^{q}$, $C,p,q\ge 0$, i.e.,  $n^{\rm wor}(\varepsilon,d;\Lambda)$ can be bounded by terms that are both polynomial in $d$ and polynomial in $\varepsilon$. Furthermore, the problem $\operatorname{APP}_d$ is called
strongly polynomial tractable in the case that the estimate on $n^{\rm wor}$
holds with $q=0$.

\begin{theorem}\label{ptstd} The family $\{\operatorname{APP}_d\}$ is strongly polynomially tractable with respect to $\Lambda^{\rm std}$ 
\begin{itemize}
\item[(i)] if there exists $\tau \leq 2$ such that \eqref{pt} holds true or
\item[(ii)] if $\{\operatorname{APP}_d\}$ is strongly polynomially tractable with respect to $\Lambda^{\rm all}$ with exponent 
$0<p_{\rm all}<2$, i.e., $n^{\rm wor}(\varepsilon,d;\Lambda^{\rm  all}) \leq C_{\rm all}\,\varepsilon^{-p_{\rm all}}$.
\end{itemize} 
More precisely, there are constants $C_{\rm std}, \delta>0$ only depending on $(C,\tau)$ or $(C_{\rm all}, p_{\rm all})$ such that $$n^{\text{wor}}(\varepsilon,d;\Lambda^{\rm std}) \leq C_{\rm std}\,\varepsilon^{-p_{\rm std}}\log(1/\varepsilon)^{\delta}$$ with 
$p_{\rm std} = \tau$ in case (i) and $p_{\rm std} = p_{\rm all}$ in case (ii).
\end{theorem}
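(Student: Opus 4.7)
My plan is to use Theorem~\ref{sampling_numbers} as the engine and reduce both cases to extracting a suitable decay rate of the singular values $(\sigma_{j,d})$. The first step is to fix a confidence level, say $\delta = 1/4$, so that Theorem~\ref{sampling_numbers} guarantees the \emph{existence} of a set $\bX$ of $n$ nodes and an algorithm $\widetilde{S}^m_\bX$ with
\[
\sup_{\|f\|_{H(K_d)}\le 1}\|f-\widetilde{S}^m_\bX f\|_{L_2(D_d,\varrho_{D_d})}^2
\le C'\max\Big\{\sigma_{m,d}^2,\tfrac{\log(8n)}{n}\sum_{j\ge m}\sigma_{j,d}^2\Big\},
\]
where $m$ can be chosen as $m\asymp n/\log n$ with a universal constant independent of $d$ (this is the key point: nothing on the right-hand side depends on $d$ once the singular values are controlled uniformly in $d$).

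For case~(i), I would translate the assumption $\sup_d\sum_j\sigma_{j,d}^\tau \le C^\tau$ with $\tau\le 2$ into the two bounds I need. A direct estimate gives $m\,\sigma_{m,d}^\tau\le C^\tau$, hence $\sigma_{m,d}\le Cm^{-1/\tau}$. For the tail sum, a H\"older-type interpolation
\[
\sum_{j\ge m}\sigma_{j,d}^2
=\sum_{j\ge m}\sigma_{j,d}^{\,2-\tau}\sigma_{j,d}^{\,\tau}
\le \sigma_{m,d}^{\,2-\tau}\sum_{j\ge m}\sigma_{j,d}^{\,\tau}
\le C^{2}\,m^{1-2/\tau}
\]
is available because $\tau\le 2$ and the sequence is non-increasing. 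Plugging $m\asymp n/\log n$ into the error bound, both terms in the $\max$ become of order $m^{-2/\tau}$ up to universal constants. Choosing $m$ so that $C'' m^{-2/\tau}\le\varepsilon^2$ forces $m\gtrsim \varepsilon^{-\tau}$, and therefore $n\lesssim \varepsilon^{-\tau}\log(1/\varepsilon)$, which is the desired bound with $p_{\mathrm{std}}=\tau$ and $\delta=1$ in the logarithmic factor.

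Case~(ii) reduces to case~(i) after I extract polynomial decay of the singular values from the tractability hypothesis on $\Lambda^{\rm all}$. Since the best error with $m$ pieces of linear information is exactly $\sigma_{m+1,d}$, the bound $n^{\rm wor}(\varepsilon,d;\Lambda^{\rm all})\le C_{\rm all}\varepsilon^{-p_{\rm all}}$ implies $\sigma_{m+1,d}\le (C_{\rm all}/m)^{1/p_{\rm all}}$ uniformly in $d$. Picking any $\tau$ with $p_{\rm all}<\tau\le 2$ yields $\sup_d\sum_j\sigma_{j,d}^\tau<\infty$, so case~(i) applies with this $\tau$ and gives the polynomial bound with exponent $\tau$. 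To reach the sharp exponent $p_{\rm std}=p_{\rm all}$, I would avoid summability at $p_{\rm all}$ itself and instead run the estimate of case~(i) directly from the pointwise decay $\sigma_{j,d}\le Cj^{-1/p_{\rm all}}$: here $\sum_{j\ge m}\sigma_{j,d}^2\lesssim m^{1-2/p_{\rm all}}$ exactly because $p_{\rm all}<2$, and the same chain of inequalities produces an error bound of order $m^{-2/p_{\rm all}}$, hence $n\lesssim\varepsilon^{-p_{\rm all}}\log(1/\varepsilon)^{\delta}$ for a suitable $\delta$ (arising from solving $n\asymp m\log n$ together with possible extra logarithmic losses when $p_{\rm all}=2$ is approached). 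The main technical obstacle is precisely this last point: one must make sure the constants and the exponent of the logarithm are uniform in $d$ and do not blow up as $p_{\rm all}\uparrow 2$ or as the summability parameter $\tau$ is pushed to its critical value; this is where the strict inequality $p_{\rm all}<2$ (respectively $\tau\le 2$) is essential, since it keeps the geometric series defining the tail sum convergent.
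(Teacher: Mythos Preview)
Your approach is correct and essentially the same as the paper's: both use Theorem~\ref{sampling_numbers} with a fixed confidence level as the engine, bound the tail $\sum_{j\ge m}\sigma_{j,d}^2$ by a Stechkin-type argument (your H\"older interpolation $\sigma_{j,d}^2=\sigma_{j,d}^{2-\tau}\sigma_{j,d}^{\tau}$ is exactly Stechkin's lemma, which the paper cites), and for case~(ii) both extract the pointwise decay $\sigma_{j,d}\lesssim j^{-1/p_{\rm all}}$ from tractability w.r.t.\ $\Lambda^{\rm all}$ and sum directly. Your closing worry about constants blowing up as $p_{\rm all}\uparrow 2$ is unnecessary: $p_{\rm all}<2$ and $\tau\le 2$ are \emph{fixed} hypotheses, and the statement explicitly allows $C_{\rm std},\delta$ to depend on them, so no limiting process occurs.
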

\begin{proof} Note that (ii) implies (i) with $\tau = 2$. Furthermore, Theorem \ref{sampling_numbers} implies strong polynomial tractability if (i) is assumed. In case (i) we may use Stechkin's lemma \cite[Lem.~7.8]{DuTeUl19} which gives that $\sum_{j=m}^{\infty} \sigma_{j,d}^2 \leq C^2m^{-2/\tau +1}$ for all $d$. This gives the exponent $p_{\textnormal{std}} = \tau$ and an additional $\log$ due to \eqref{choice_m2}. If (ii) is assumed then $\sum_{j=m}^{\infty} \sigma_{j,d}^2 \leq C'm^{-2/p_{\rm all} +1}$ for all~$d$.
\end{proof}

Theorem \ref{ptstd} is stronger than \cite[Thm.~26.20]{NoWoIII} in two aspects. As pointed out in the proof, assumption (i) is weaker than (ii), which is essentially the one in \cite[Thm.~26.20]{NoWoIII}. Furthermore, our statement is stronger since $p_{\rm std}$ equals $p_{\rm all}$.
The authors in \cite[26.6.1]{NoWoIII} showed that $p_{\rm std} = p_{\rm all}+\frac{1}{2} [p_{\rm all}]^2$ and proposed that ``the lack of the exact exponent represents another major challenge'' and formulated Open Problem 127.
Our considerations prove that the dependence on $\varepsilon$ of the tractability estimates on $n^{\rm wor}(\varepsilon,d;\Lambda^{\rm all})$ and $n^{\rm wor}(\varepsilon,d;\Lambda^{\rm std})$ coincide up to logarithmic factors. Similar assertions hold true when {\em strong polynomial tractability} is replaced by {\em polynomial tractability}. The modifications are straight forward.  

\begin{example} Let us consider an example from \cite{PaWo10} and \cite{KSU3}, namely, if $s_1 \leq s_2 \leq ... \leq s_d$ then
$$
  H(K_d):=H_{\text{mix}}^{\vec{s}}(\tor^d)= H^{{s_1}}(\tor) \otimes \cdots \otimes H^{s_d}(\tor)
$$
and $\operatorname{APP}_d:H_{\text{mix}}^{\vec{s}}(\tor^d)\to L_2(\tor^d)$. Here $\tor = [0,1]$, where opposite points are identified, and $H^s(\tor)$ is normed by 
$$
	\|f\|^2_{H^s} := \sum\limits_{k \in \Z} |\hat{f}_k|^2 \, w_{s}(k)^2 	
$$
with $w_s(k) = \max\{1,(2\pi)^s|k|^s\}$, see also Section \ref{mixed1}.
The smoothness vector $\vec{s}$ is supposed to grow in $j$, e.g., for $\beta>0$ we have  
\begin{equation}\label{PaWo}
	s_j \geq \beta \log_2(j+1)\,,\quad j\in \N\,.
\end{equation}
It has been shown in \cite{PaWo10}, see also \cite{KSU3}, that the growth condition \eqref{PaWo} is necessary and sufficient for polynomial tractability with respect to $\Lambda^{\textnormal{all}}$. Taking Theorem \ref{ptstd} into account we easily check that \eqref{pt} is satisfied with $\tau \leq 2$ whenever $\beta\cdot \tau > 1$, which means that $\beta >  1/2$ is sufficient for strong polynomial tractability. In this case we obtain for any $1/\beta < \tau \leq 2$ that 
$$n^{\text{wor}}(\varepsilon,d;\Lambda^{{\textnormal{std}}}) \leq C_{\tau}\,\varepsilon^{-\tau}\,.$$

\end{example}

\section{Recovery of individual functions}
\label{sect_individual}
In this section we are interested in the reconstruction of an individual function $f$ (taken from the unit ball of~$H(K)$) from samples at random nodes. In the IBC community, see \cite{NoWoI, NoWoII, NoWoIII}, such a scenario is called ``randomized setting'', which refers to the occurrence of a random element in the algorithm (Monte Carlo). 
The following investigations improve on the results in \cite{WaWo07} from different points of view. On the one hand, it is not necessary to choose sampling nodes according to a whole bunch of probability density functions according to Remark~\ref{imp_samp}. On the other hand, assuming a bounded kernel turns out to be sufficient to obtain the rate of convergence matching that of $(\sigma_k)_{k\in\N}$ when randomly choosing sampling nodes according to the given probability measure $\varrho_D$ due to Corollary~\ref{cor:individual}.

However, the subsequent analysis is related to the one in \cite{CoDaLe13, CoDaLe19}.
With similar techniques as above we will get an estimate for the conditional mean of the individual error $\|f-S^m_\bX f\|_{L_2(D,\varrho_D)}$.

\begin{theorem}\label{individual} Let $\varrho_D$ be a probability measure on $D$ and $H(K)$ denote a reproducing kernel Hilbert which is compactly embedded into the space $L_2(D,\varrho_D)$. Let $m,n\in \N$, $m\ge 2$, be chosen such that \eqref{f1} holds for some $0<\delta <1$. Let further $f$ be a fixed function such that $\|f\|_{ H(K)} \leq 1$. Drawing each sampling node in $\bX=\{\bx^1,\ldots,\bx^n\} \subset D$  i.i.d.\ at random according to $\varrho_D$, we have for the conditional expectation of the individual error
$$
  \Ept\Big(\|f-S^m_\bX f\|^2_{L_2(D,\varrho_D)}\Big|\|\mathbf{H}_m-\mathbf{I}_m\| \leq 1/2\Big) \leq \frac{1}{1-\delta}\Bigg(\sigma_m^2 + \frac{C_{\delta}}{\log n}\sigma_m^2\Bigg)
  \le \frac{1.1}{1-\delta}\,\sigma_m^2\,,
$$
where $0<C_{\delta}\le 0.06$ depends on $\delta$.
\end{theorem}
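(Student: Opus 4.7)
\medskip
\noindent\textbf{Proof proposal.} The starting point is an orthogonal decomposition in $L_2(D,\varrho_D)$. Since $(e_j)_{j\in\N}$ is orthogonal in both $H(K)$ and $L_2(D,\varrho_D)$, with $\langle e_j,e_k\rangle_{L_2}=\lambda_j\delta_{j,k}$, the partial sum $P_{m-1}f=\sum_{j=1}^{m-1}\langle f,e_j\rangle_{H(K)}\,e_j$ also coincides with the $L_2$-orthogonal projection onto the span of the first $m-1$ eigenfunctions. On the event $B:=\{\bX\colon\|\mathbf{H}_m-\mathbf{I}_m\|\le 1/2\}$ the matrix $\bL_m$ has full column rank, so $S^m_\bX$ acts as the identity on $\operatorname{span}\{\eta_1,\ldots,\eta_{m-1}\}$, giving $S^m_\bX P_{m-1}f=P_{m-1}f$. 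Combining the Pythagorean identity with $S^m_\bX(f-P_{m-1}f)\in\operatorname{span}\{\eta_1,\ldots,\eta_{m-1}\}$, I would split
$$
\|f-S^m_\bX f\|^2_{L_2(D,\varrho_D)} = \|f-P_{m-1}f\|^2_{L_2(D,\varrho_D)} + \|S^m_\bX(f-P_{m-1}f)\|^2_{L_2(D,\varrho_D)},
$$
and control the first summand by $\|f-P_{m-1}f\|^2_{L_2}\le\sigma_m^2\|f\|^2_{H(K)}\le\sigma_m^2$.

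The second summand is where the individual-function refinement enters. Since $(\eta_j)_{j<m}$ is ONB in $L_2(D,\varrho_D)$, one has $\|S^m_\bX(f-P_{m-1}f)\|^2_{L_2}=\|\mathbf{c}\|_2^2$ for the coefficient vector $\mathbf{c}=(\bL_m^{\ast}\bL_m)^{-1}\bL_m^{\ast}\tilde{\mathbf{f}}$ with $\tilde{\mathbf{f}}=\bigl((f-P_{m-1}f)(\bx^k)\bigr)_{k=1}^n$. Writing $\bL_m^{\ast}\bL_m=n\mathbf{H}_m$, on $B$ we have $\|\mathbf{H}_m^{-1}\|\le 2$, hence $\|\mathbf{c}\|_2^2\le\frac{4}{n^2}\|\bL_m^{\ast}\tilde{\mathbf{f}}\|_2^2$. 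The key observation is that the entries of $\bL_m^{\ast}\tilde{\mathbf{f}}$ are sums of i.i.d.\ random variables $Z_{j,k}:=\overline{\eta_j(\bx^k)}\,(f-P_{m-1}f)(\bx^k)$ with \emph{mean zero}: for $j<m$ the $L_2$-orthogonality $\langle f-P_{m-1}f,\eta_j\rangle_{L_2(D,\varrho_D)}=0$ forces $\Ept Z_{j,k}=0$. A standard variance calculation then yields
$$
\Ept\Bigl\|\tfrac{1}{n}\bL_m^{\ast}\tilde{\mathbf{f}}\Bigr\|_2^2 = \frac{1}{n}\int_D\Bigl(\sum_{j=1}^{m-1}|\eta_j(\bx)|^2\Bigr)\,|(f-P_{m-1}f)(\bx)|^2\,\varrho_D(\mathrm{d}\bx)\le \frac{N(m)}{n}\sigma_m^2.
$$

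Passing to the conditional expectation, since the pointwise inequality $\|\mathbf{c}\|_2^2\le\frac{4}{n^2}\|\bL_m^{\ast}\tilde{\mathbf{f}}\|_2^2$ is valid only on $B$, multiplying by $\chi_B$ gives $\Ept(\|\mathbf{c}\|_2^2\chi_B)\le\frac{4N(m)\sigma_m^2}{n}$, and Proposition~\ref{prop1} together with the definition \eqref{eq:expectation_cond} produces $\Ept(\|\mathbf{c}\|_2^2\mid B)\le\frac{4N(m)\sigma_m^2}{n(1-\delta)}$. Substituting the hypothesis \eqref{f1}, $N(m)/n\le[48(\sqrt{2}\log(2n)-\log\delta)]^{-1}$, yields
$$
\frac{4N(m)}{n}\le \frac{1}{12(\sqrt{2}\log(2n)-\log\delta)}\le \frac{C_\delta}{\log n}
$$
with $C_\delta:=\log(n)\cdot\frac{4N(m)}{n}$ satisfying $C_\delta\le (12\sqrt{2})^{-1}+o(1)\le 0.06$ (the asymptotic value $1/(12\sqrt{2})\approx 0.0589$ is already below $0.06$, and the negative contribution $-\log\delta$ only helps). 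Combining the two summands delivers the stated bound.

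The main conceptual hurdle is precisely the mean-zero argument: in the worst-case analysis of Theorem~\ref{thm9} one had to deal with the supremum over all $f$ and could not exploit cancellation, losing the crucial $1/n$ factor from averaging independent centered samples. Here, for a single fixed $f$, the $L_2$-orthogonality $f-P_{m-1}f\perp\eta_j$ for $j<m$ restores that gain, turning the $\mathcal O(\sigma_m^2)$ worst-case bound into the $(1+C_\delta/\log n)\sigma_m^2$ individual bound. The only technical care needed is that the inequality $\|\mathbf{H}_m^{-1}\|\le 2$ is valid only on $B$, which is why multiplication by $\chi_B$ before taking the (unconditional) expectation, rather than after, is essential.
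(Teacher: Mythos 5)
Your proposal is correct and follows essentially the same route as the paper's own proof: the same Pythagorean splitting with $S^m_\bX P_{m-1}f=P_{m-1}f$, the same bound $\|\mathbf{c}\|_2^2\le\frac{4}{n^2}\|\bL_m^\ast\tilde{\mathbf f}\|_2^2$ on the event $B$, and the same variance computation in which the off-diagonal terms vanish because $f-P_{m-1}f\perp\eta_k$ in $L_2(D,\varrho_D)$ for $k<m$, yielding the $\frac{4N(m)}{n}\sigma_m^2$ correction and the constant $C_\delta\le(12\sqrt{2})^{-1}<0.06$ via \eqref{f1}. The paper writes the cancellation out as an explicit double sum rather than invoking "i.i.d.\ mean-zero" language, but the argument is identical.
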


\begin{proof} This time we follow the proof of \cite[Thm.\ 2]{CoDaLe13} and  obtain
\begin{equation*}%
  \begin{split}
	\|f-S^m_\bX f\|^2_{L_2(D,\varrho_D)} &= \|f-P_{m-1}f\|^2_{L_2(D,\varrho_D)}+\|P_{m-1}f-S^m_\bX f\|^2_{L_2(D,\varrho_D)}\\
	&\leq \sigma^2_m + \|S^m_\bX (P_{m-1}f-f)\|^2_{L_2(D,\varrho_D)}\\
	&\leq \sigma^2_m + \|(\bL_m^{\ast}\,\bL_m)^{-1}\|^2 \, \|\bL_m^{\ast}\, (g(\bx^1),...,g(\bx^n))^\top\|^2_2\\
	&\leq \sigma^2_m + \frac{4}{n^2}\sum\limits_{k=1}^{m-1} \Bigg|\sum\limits_{j=1}^n \overline{{\eta_k}(\bx^j)}g(\bx^j)\Bigg|^2\\
	&= \sigma^2_m + \frac{4}{n^2}\sum\limits_{k=1}^{m-1}\sum\limits_{j=1}^n\sum\limits_{i=1}^n g(\bx^i)\overline{g(\bx^j)}\overline{\eta_k(\bx^i)}\eta_k(\bx^j)\,,
	  \end{split}
\end{equation*}
where $g = f - P_{m-1}f$\,. Averaging on both sides yields 
\begin{equation*}
  \begin{split}
	&\int\limits_{\|\mathbf{H}_m-\mathbf{I}_m\| \leq 1/2}\|f-S^m_\bX f\|^2_{L_2(D,\varrho_D)} \, \varrho^n_D(\mathrm{d}\bx)\\ 
	&\hspace{1cm}\leq\sigma^2_m + \frac{4}{n^2}\sum\limits_{k=1}^{m-1}\sum\limits_{j=1}^n\sum\limits_{i=1}^n \Ept\big(g(\bx^i)\overline{g(\bx^j)}\overline{\eta_k(\bx^i)}\eta_k(\bx^j)\big)	  \\
	&\hspace{1cm}= \sigma_m^2+\frac{4}{n}
	\sum\limits_{k=1}^{m-1}\int_D |g(\bx)|^2|\eta_k(\bx)|^2\,\varrho_D(\mathrm{d}\bx)
	+ \frac{4n(n-1)}{n^2}\sum\limits_{k=1}^{m-1} \Bigg|\int_D g(\bx)\overline{\eta_k(\bx)} \, 
	\varrho_D(\mathrm{d}\bx)\Bigg|^2\,.
	\end{split}
\end{equation*}
Note that the last summand on the right-hand side vanishes since $g$ is orthogonal to $\eta_k$, $k = 1,...,m-1$. 
This gives 
\begin{equation*}
\begin{split}
&\int\limits_{\|\mathbf{H}_m-\mathbf{I}_m\| \leq 1/2}\|f-S^m_\bX f\|^2_{L_2(D,\varrho_D)} \, \varrho^n_D(\mathrm{d}\bx)  \\ 
&\hspace{1cm}\leq \sigma_m^2 + \frac{4 N(m)}{n}\|f-P_{m-1}f\|^2_{L_2(D,\varrho_D)}\\
&\hspace{1cm}\leq \sigma_m^2 + \frac{C_{\delta}}{\log(2n)}\sigma_m^2
\end{split}
\end{equation*}
by taking $4 N(m)/n \leq C_{\delta}/\log(2n)$ into account, see \eqref{f1}, and there exists a $C_\delta\le\frac{4}{48\sqrt{2}}\le 0.06$\,.
\end{proof}

Analogously to the proof of Corollary~\ref{cor10}, one shows the following result.

\begin{corollary}\label{cor:individual}
Under the same assumptions as in Theorem \ref{individual} it holds for fixed $\delta >0$
\begin{equation*}
  \Prob\left(
  \|f-S^m_\bX f\|^2_{L_2(D,\varrho_D)} \leq \frac{1}{\delta}\, \sigma_m^2\,\left(1+\frac{0.06}{\log n}\right)\,\right) \geq 1-3\delta\,.
\end{equation*}
\end{corollary}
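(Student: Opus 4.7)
The plan is to mimic almost verbatim the proof of Corollary~\ref{cor10}, but feed the conditional expectation bound from Theorem~\ref{individual} into Markov's inequality instead of the worst-case bound from Theorem~\ref{thm9}. The only real decision is what threshold $t$ to choose so that the constant $1/\delta$ in the statement appears naturally.

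First I would introduce two events on the sample of nodes $\bX$: the ``bad-approximation'' event
$$
A := \Big\{ \bX \;\colon\; \|f-S^m_\bX f\|^2_{L_2(D,\varrho_D)} \le t \Big\}
$$
and the ``well-conditioned matrix'' event
$$
B := \big\{\bX \;\colon\; \|\mathbf{H}_m - \mathbf{I}_m\| \le 1/2\big\},
$$
for the value $t := \frac{1}{\delta}\sigma_m^2\bigl(1+\tfrac{0.06}{\log n}\bigr)$ that appears in the statement. Then I would write
$$
\Prob(A) \;=\; 1-\Prob(A^\complement\cap B)-\Prob(A^\complement\cap B^\complement).
$$
The second term is bounded by $\Prob(B^\complement)\le \delta$ by Proposition~\ref{prop1} together with condition~\eqref{f1} on $m,n,\delta$, exactly as in the proof of Corollary~\ref{cor10}.

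For the first term I would use $\Prob(A^\complement\cap B)\le \Prob(A^\complement\,|\,B)$ and apply the conditional Markov inequality \eqref{eq:markov_cond}, which gives
$$
\Prob(A^\complement\,|\,B) \;\le\; \frac{\Ept\bigl(\|f-S^m_\bX f\|^2_{L_2(D,\varrho_D)}\,\bigm|\,B\bigr)}{t}.
$$
Plugging in the conditional expectation bound of Theorem~\ref{individual}, namely
$$
\Ept\bigl(\|f-S^m_\bX f\|^2_{L_2(D,\varrho_D)}\,\bigm|\,B\bigr) \;\le\; \tfrac{1}{1-\delta}\,\sigma_m^2\,\bigl(1+\tfrac{0.06}{\log n}\bigr),
$$
and substituting our choice of $t$, the ratio collapses to $\frac{\delta}{1-\delta}$. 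For $\delta\in(0,1/2]$ this is at most $2\delta$, which sums with the $\delta$ from the $B^\complement$ term to give $\Prob(A^\complement)\le 3\delta$, i.e., the claimed bound.

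There is no real obstacle here; the only small subtlety is making sure the factor $\tfrac{1}{1-\delta}$ inherited from Theorem~\ref{individual} is absorbed into the $2\delta$ bound via $\delta\le 1/2$, and verifying that condition \eqref{f1} (which is already assumed in Theorem~\ref{individual}) directly furnishes $\Prob(B^\complement)\le\delta$ via Proposition~\ref{prop1}. Everything else is a literal repetition of the bookkeeping in the proof of Corollary~\ref{cor10}.
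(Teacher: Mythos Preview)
Your proposal is correct and is exactly the argument the paper intends: it explicitly says the corollary is shown ``analogously to the proof of Corollary~\ref{cor10}'', and your write-up carries out precisely that analogy with the conditional expectation bound from Theorem~\ref{individual} replacing the one from Theorem~\ref{thm9}.
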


\begin{remark}\label{imp_samp} 
{\em(i)} Following \cite{CoMi17} we may relax the condition \eqref{f1} to 
$$
    m := \left\lfloor\frac{n}{48(\sqrt{2}\log(2n)-\log{\delta})}\right\rfloor+1\,,
$$
when sampling with respect to the new measure (importance sampling)
$$
	\mu_m(A) := \int_A \varrho_m(\bx) \, \varrho_D(\mathrm{d}\bx)\,,
$$
where  $\varrho_m$ is given by 
$$
	\varrho_m(\bx) := \frac{1}{m-1}\sum\limits_{k=1}^{m-1}|\eta_k(\bx)|^2\,.
$$
Then, the corresponding approximation of the function $f$ is based on the solution of a weighted least squares problem similar to the one discussed in Section~\ref{Sect5_opt}.
Note that we do not have to assume the square summability of the singular numbers of the embedding $\Id:H(K)\to L_2(K,\varrho_D)$ here. 

{\em(ii)} The result of Theorem~\ref{individual} advanced with (i) directly leads to estimates on the power of standard information, see Section~\ref{tract}, in the so-called randomized setting, cf.~\cite{NoWoIII}.
Roughly speaking, $n$ sampling values (standard information) in the randomized setting are at least as powerful as $ c n/\log{n}$ linear information, i.e., the
supremum over all $f\in H(K)$ of the expected approximation error that is caused by the weighted least squares regression is almost as good as the best possible worst case approximation error caused by the projection.
The recent work \cite{LuWa21a} treats that topic in full detail.
Moreover, in \cite{LuWa21b} the authors present improvements on the power of standard information in the so-called average case setting.
Both paper investigate the weighted least squares regression given in Algorithm~\ref{algo1:reweighted} using the techniques that yielded the results of this section.

{\em (iii)} Together with the Weaver subsampling technique in \cite{NSU20} we obtain 
a further significant improvement of the results in \cite{LuWa21a} in the situation above if the sequence $(\sigma_k)_{k\in\N}$ decays ``faster'' than $k^{-1/2}$. 
Just for the sake of completeness, we state the corresponding general relation  
\begin{equation*}
	e^{\rm ran}(n,d;\Lambda^{\rm std}) \leq C\min\{\sqrt{\log n}\cdot e^{\rm det}(c_1n,d;\Lambda^{\rm all}), 
	e^{\rm det}(c_2n/\log n,d;\Lambda^{\rm all})\}\,, 
\end{equation*}	
with three universal constants $C,c_1,c_2>0$, where we use the notation from \cite{NoWoI,NoWoII,NoWoIII}.
\end{remark}

\section{Optimal weights for numerical integration}
\label{Numint}
We consider the problem of approximating the integral with respect to $\mu_D$ of a function $f$ by the cubature rule $\operatorname{Q}_\bX^m$
\begin{align*}
\IntEx{\mu_D} f:=\int_D f \, \mathrm{d}\mu_D
\approx
\operatorname{Q}_\bX^m f:=\sum_{j=1}^n q_j f(\bx^j)= \boldsymbol{q}^\top \boldsymbol{f}\,,
\end{align*}
where the weights $q_j$ are determined by $\bX = \{\bx^1,...,\bx^n\}$. Indeed, assuming $\bL_m$ of full column rank and following Oettershagen \cite{Oe17}, we set
\begin{align}
\boldsymbol{q}&:= \overline{\bL_m}\, \overline{(\bL_m^{\ast}\,\bL_m)^{-1}}\, \boldsymbol{b}\;\in\C^n\,,\nonumber
\\
\text{where } \; \boldsymbol{b}&:=\big(b_k\big)_{k=1}^{m-1} \in\C^{m-1} \text{ with } \; b_k:=\int_D \eta_k\,\mathrm{d}\mu_D\,.
\label{eq:compute_int_weights:b}
\end{align}
In fact, the cubature rule $\operatorname{Q}_\bX^m$ is the implicit integration of the least squares solution
$S^m_\bX f:=\sum_{k=1}^{m-1} c_k \eta_k$, cf.~\eqref{eq:def_SmX}, $\boldsymbol{c}:= (\bL_m^{\ast}\,\bL_m)^{-1}\,\bL_m^\ast \, \boldsymbol{f}$, since
\begin{align*}
\operatorname{Q}_\bX^m f&=\sum_{j=1}^n q_j f(\bx^j)
= \boldsymbol{b}^\top (\bL_m^{\ast}\,\bL_m)^{-1} \, \bL_m^\ast \, \boldsymbol{f}
= \sum_{k=1}^{m-1} c_k\int_D \eta_k\,\mathrm{d}\mu_D=\int_D S^m_\bX f\mathrm{d}\mu_D\,.
\end{align*}
Using this, we give upper bounds on the integration error caused by $\operatorname{Q}_\bX^m$.

\begin{theorem}\label{thm:int_error}
Let $\mu_D$ be a measure on $D$ such that $L_1(D,\varrho_D) \hookrightarrow L_1(D,\mu_D)$. Denote with 
$$
	C_{\varrho,\mu} := \|\operatorname{Id}:L_1(D,\varrho_D) \to L_1(D,\mu_D)\| < \infty
$$
the norm of the embedding. Under the same assumptions as in Theorem \ref{thm9} 
it holds for fixed $\delta >0$

\begin{equation*}
  \Prob\left(\sup\limits_{\|f\|_{{H(K)}}\leq 1} \Bigg|\int_D f \, \mathrm{d}\mu_D-\operatorname{Q}_\bX^m f\Bigg|^2
  \leq \frac{29\, C_{\varrho,\mu}^2}{\delta}\,\max\Bigg\{\sigma^2_m,\frac{\log(8n)}{n}T(m)\Bigg\}\right) \geq 1-3\delta\,.
\end{equation*}

\end{theorem}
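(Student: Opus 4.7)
My plan is to reduce the integration error bound directly to the $L_2(D,\varrho_D)$ worst-case recovery bound already established in Corollary~\ref{cor10}. The key structural observation, which is built into the construction of the weights $\boldsymbol{q}$ in \eqref{eq:compute_int_weights:b}, is the identity
\begin{equation*}
\operatorname{Q}^m_{\bX} f = \int_D S^m_{\bX} f \, \mathrm{d}\mu_D\,,
\end{equation*}
which was already noted just before the theorem statement. Consequently, by linearity,
\begin{equation*}
\int_D f \, \mathrm{d}\mu_D - \operatorname{Q}^m_{\bX} f = \int_D (f - S^m_{\bX} f) \, \mathrm{d}\mu_D\,.
\end{equation*}

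The next step is a routine chain of three inequalities. First, I would apply the triangle inequality for integrals to pass from the difference of integrals to $\|f-S^m_{\bX} f\|_{L_1(D,\mu_D)}$. Second, I invoke the assumed continuous embedding $L_1(D,\varrho_D) \hookrightarrow L_1(D,\mu_D)$ to pick up the factor $C_{\varrho,\mu}$ and bound this by $C_{\varrho,\mu}\|f-S^m_{\bX} f\|_{L_1(D,\varrho_D)}$. Third, since $\varrho_D$ is a probability measure, the Cauchy--Schwarz inequality yields $\|f-S^m_{\bX} f\|_{L_1(D,\varrho_D)} \leq \|f-S^m_{\bX} f\|_{L_2(D,\varrho_D)}$. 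Squaring and taking the supremum over the unit ball of $H(K)$ gives
\begin{equation*}
\sup\limits_{\|f\|_{H(K)}\leq 1}\Bigg|\int_D f \, \mathrm{d}\mu_D - \operatorname{Q}^m_{\bX} f\Bigg|^2 \leq C_{\varrho,\mu}^2 \sup\limits_{\|f\|_{H(K)}\leq 1}\|f-S^m_{\bX} f\|^2_{L_2(D,\varrho_D)}\,.
\end{equation*}

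Finally, I would apply Corollary~\ref{cor10}, which asserts that with probability at least $1-3\delta$ the right-hand side of the above inequality is bounded by $\frac{C\,C_{\varrho,\mu}^2}{\delta}\max\bigl\{\sigma_m^2,\frac{\log(8n)}{n}T(m)\bigr\}$ with $C = 3+8\,C_{\mathrm{R}}^2+4\,C_{\mathrm{R}}<28.05 < 29$, which yields the claimed statement. There is really no hard step here: the whole argument rests on the fact that the quadrature rule is, by construction, the integration of the least-squares approximant, after which the result becomes a purely deterministic reduction of integration error to $L_2$ approximation error via a probability-measure Cauchy--Schwarz and the embedding norm $C_{\varrho,\mu}$. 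The only point requiring slight care is that we pick up no additional randomness here: the event on which the conclusion of Corollary~\ref{cor10} fails is identical to the event on which the integration bound fails, so the $1-3\delta$ probability passes through unchanged.
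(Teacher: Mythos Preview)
Your proof is correct and follows essentially the same approach as the paper: both reduce the integration error to the $L_2(D,\varrho_D)$ recovery error via the identity $\operatorname{Q}_\bX^m f=\int_D S_\bX^m f\,\mathrm{d}\mu_D$, the triangle inequality, the $L_1$ embedding with constant $C_{\varrho,\mu}$, and Cauchy--Schwarz (using that $\varrho_D$ is a probability measure), then invoke Corollary~\ref{cor10}.
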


\begin{proof}
Using the embedding relation of the different $L_1$-spaces we have 
\begin{align}
\Bigg|\int_D f \, \mathrm{d}\mu_D-\operatorname{Q}_\bX^m f\Bigg|=\Bigg|\int_D f-S^m_\bX f \, \mathrm{d}\mu_D\Bigg|\nonumber
&\le
\int_D \Big|f-S^m_\bX f \Big| \, \mathrm{d}\mu_D\label{f100}\\
&\le C_{\varrho,\mu}\int_D \Big|f-S^m_\bX f \Big| \, \mathrm{d}\varrho_D\vspace{0.7cm}\\
&\le C_{\varrho,\mu}\,\|f-S^mf\|_{L_2(D,\varrho_D)}.\nonumber
\end{align}
We conclude the proof using Corollary \ref{cor10}.
\end{proof}

In Theorem \ref{thm:int_error} we assume that the kernel is bounded, i.e., $\sup_{\bx \in D} K(\bx,\bx)<\infty$. However, as we will see below it is enough to assume $\int_{D}K(\bx,\bx) \, \varrho_D(\mathrm{d}\bx)<\infty$ and that $\varrho_D$ is a finite measure. We define the following modified (reweighted) cubature formula
\begin{equation*}
  \begin{split}
	\widetilde{Q}^m_{\bX} f&:= \int_D \widetilde{S}^m_{\bX}f(\bx)\varrho_D(\bx) = \widetilde{\boldsymbol{q}}^\top \boldsymbol{f}\\&=(\mathbf{D}_{\varrho_m}\overline{\widetilde{\bL}_m}\,\overline{(\widetilde{\bL}_m^{\ast}\widetilde{\bL}_m)^{-1}}\, \boldsymbol{b})^\top \boldsymbol{f}
= \boldsymbol{b}^\top (\widetilde{\bL}_m^{\ast}\widetilde{\bL}_m)^{-1} \widetilde{\bL}_m^\ast \mathbf{D}_{\varrho_m}\boldsymbol{f}\,,
\end{split}
\end{equation*}
where $\boldsymbol{f}$ is the vector of function samples $\big(f(\bx^1),...,f(\bx^n)\big)$, the vector $\boldsymbol{b}$ is given as in~\eqref{eq:compute_int_weights:b}, $\widetilde{\bL}_m$ as in Algorithm \ref{algo1:reweighted}, and 
$$\mathbf{D}_{\varrho_m} = \diag\bigg(1/\sqrt{\varrho_m(\bx^1)},...,1/\sqrt{\varrho_m(\bx^n)}\bigg)\,.$$ 
Here $\varrho_m(\cdot)$ is given by \eqref{density1} above and depends on the spectral properties of the kernel. Note that we simply put the respective entry to $0$ in $\mathbf{D}_{\varrho_m}$ if $\varrho_m(\bx^n)$ happens to be zero. 
\begin{theorem}\label{thm:int_error2} If $\varrho_D$ denotes a finite measure and $\int_D K(\bx,\bx) \, \varrho_D(\mathrm{d}\bx)<\infty$, then
\begin{equation}\label{f101}
	\sup\limits_{\|f\|_{H(K)}\leq 1}\Bigg|\int_D f(\bx) \, \varrho_D(\mathrm{d}\bx) - \widetilde{Q}^m_{\bX}(f)\Bigg|^2 \leq 
	\frac{50}{\delta}\varrho_D(D)\max\left\{\sigma^2_m ,\frac{\log (8n)}{n}\sum\limits_{j=m}^{\infty}\sigma_j^2\right\}
\end{equation}
holds with probability larger than $1-3\delta$ if the $n$ nodes in $\bX$ are drawn i.i.d.\ according to \eqref{eq:def_mu_m} above.
\end{theorem}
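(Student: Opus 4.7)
The plan is to mimic the proof of Theorem~\ref{thm:int_error}, but with two modifications: first, replace the ordinary least squares operator $S_{\bX}^m$ by the reweighted operator $\widetilde{S}_{\bX}^m$ from Algorithm~\ref{algo1:reweighted}, and second, handle the passage from integration against $\varrho_D$ to an $L_2(D,\varrho_D)$-norm via Cauchy--Schwarz (using finiteness of $\varrho_D$) instead of via an $L_1 \hookrightarrow L_1$ embedding constant. The bound on the worst-case $L_2$-error will then come from Theorem~\ref{sampling_numbers}, whose constant $C<49.5$ is the source of the factor $50$ appearing in \eqref{f101}.

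First I would verify the crucial identity $\widetilde{Q}^m_{\bX}(f) = \int_D \widetilde{S}^m_{\bX}f(\bx)\,\varrho_D(\mathrm{d}\bx)$. From the definition
$$
\widetilde{S}^m_{\bX}f = \sum_{j=1}^{m-1} \tilde c_j\,\eta_j\,,\qquad \widetilde{\mathbf c}=(\widetilde{\bL}_m^{\ast}\widetilde{\bL}_m)^{-1}\widetilde{\bL}_m^\ast\,\boldsymbol{g}\,,
$$
with $g_k=f(\bx^k)/\sqrt{\varrho_m(\bx^k)}$ (and $0$ when $\varrho_m(\bx^k)=0$), we get
$$
\int_D \widetilde{S}^m_{\bX}f\,\mathrm{d}\varrho_D = \sum_{j=1}^{m-1} \tilde c_j\, b_j = \boldsymbol{b}^\top(\widetilde{\bL}_m^{\ast}\widetilde{\bL}_m)^{-1}\widetilde{\bL}_m^\ast\mathbf{D}_{\varrho_m}\boldsymbol{f} = \widetilde{Q}^m_{\bX}(f)\,,
$$
by the definition of $\boldsymbol{b}$ in \eqref{eq:compute_int_weights:b} and of $\mathbf{D}_{\varrho_m}$.

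Next, for any $f\in H(K)$, I would chain the estimates
$$
\left|\int_D f\,\mathrm{d}\varrho_D - \widetilde{Q}^m_{\bX}(f)\right|
= \left|\int_D (f-\widetilde{S}^m_{\bX}f)\,\mathrm{d}\varrho_D\right|
\le \int_D |f-\widetilde{S}^m_{\bX}f|\,\mathrm{d}\varrho_D
\le \sqrt{\varrho_D(D)}\,\|f-\widetilde{S}^m_{\bX}f\|_{L_2(D,\varrho_D)}\,,
$$
where the last inequality is Cauchy--Schwarz applied to $|f-\widetilde{S}^m_{\bX}f|\cdot 1$ in $L_2(D,\varrho_D)$; this uses precisely the finiteness of $\varrho_D$ and is the step replacing the $L_1$-embedding argument in \eqref{f100}. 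Squaring yields
$$
\sup_{\|f\|_{H(K)}\le 1}\left|\int_D f\,\mathrm{d}\varrho_D - \widetilde{Q}^m_{\bX}(f)\right|^2
\le \varrho_D(D)\sup_{\|f\|_{H(K)}\le 1}\|f-\widetilde{S}^m_{\bX}f\|_{L_2(D,\varrho_D)}^2\,.
$$

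Finally I would apply Theorem~\ref{sampling_numbers} (whose hypotheses---separability of $H(K)$, $\sigma$-finiteness of $\varrho_D$, finite trace of $K$---are exactly the assumptions we are working under, as $\varrho_D$ is even finite), with the specific choice of $m$ from \eqref{choice_m2} and the nodes drawn according to the reweighted measure $\mu_m$ from \eqref{eq:def_mu_m}. This supplies, with probability at least $1-3\delta$, the bound
$$
\sup_{\|f\|_{H(K)}\le 1}\|f-\widetilde{S}^m_{\bX}f\|_{L_2(D,\varrho_D)}^2 \le \frac{C}{\delta}\max\left\{\sigma_m^2,\frac{\log(8n)}{n}\sum_{j=m}^\infty \sigma_j^2\right\}
$$
with the universal constant $C<49.5<50$. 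Combining the two displays gives \eqref{f101}. There is no real obstacle here; the only delicate point is ensuring that the identity $\widetilde{Q}^m_{\bX}f=\int_D \widetilde{S}^m_{\bX}f\,\mathrm{d}\varrho_D$ is unaffected by the possible zero entries of $\varrho_m$ (which cause the corresponding entries of $\mathbf{D}_{\varrho_m}$ and of $\widetilde{\bL}_m$ to be set to $0$ consistently), so the algebra above remains valid.
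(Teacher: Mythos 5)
Your proposal is correct and follows essentially the same route as the paper: the paper's proof simply combines Theorem~\ref{sampling_numbers} with the chain of inequalities \eqref{f100}, and your Cauchy--Schwarz step producing the factor $\varrho_D(D)$ is exactly the adaptation needed when $\varrho_D$ is finite rather than a probability measure. The verification of $\widetilde{Q}^m_{\bX}f=\int_D\widetilde{S}^m_{\bX}f\,\mathrm{d}\varrho_D$ and the constant bookkeeping $C<49.5<50$ are likewise as in the paper.
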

\begin{proof}
Using the bound on the $L_2(D,\varrho_D)$ error from Theorem \ref{sampling_numbers} together with \eqref{f100} above, we obtain \eqref{f101} with high probability if the nodes $\bX$ are sampled according to the measure \eqref{eq:def_mu_m}. \end{proof}

\begin{remark} The previous result essentially improves on the bound in \cite[Thm.\ 1]{BeBaCh19} if the singular values decay polynomially. Note that we assume that $\varrho_D$ is a finite measure and $\int_D K(\bx,\bx) \, \varrho_D(\mathrm{d}\bx)<\infty$. In \cite[Thm.\ 1]{BeBaCh19} a logarithmic oversampling is not required, however the error bounds are worse by a factor $n$, which is substantial, e.g., in the case of the Sobolev kernel, see Section~\ref{Num_int_per}.
\end{remark}

\section{Hyperbolic cross Fourier regression}
\label{mixed1}
In the sequel we are interested in the recovery of functions from periodic Sobolev spaces. That is, we consider functions on the torus $\tor^d \simeq [0,1)^d$ where opposite points are identified. Note that the unit cube $[0,1]^d$ is preferred here since it has Lebesgue measure $1$ and is therefore a probability space. We could have also worked with $[0,2\pi]^d$ and the Lebesgue measure (which can be made a probability measure by a $d$-dependent rescaling). The general error bounds for the recovery error given below (in terms of $(\sigma_j)_{j\in\N}$ like in Theorem \ref{thm:asymp_hsmix}) are not affected by this rescaling since the sequence $(\sigma_j)_{j\in \N}$ then also changes. However, some of the preasymptotic estimates for the $(\sigma_j)_{j\in \N}$ are sensitive with respect to a different domain as the results in Krieg \cite{Kr18} point out.

For $\alpha\in \N$ we define the space $H^{\alpha}_{\textnormal{mix}}(\tor^d)$ as the Hilbert space with the inner product 
\begin{equation}\label{equ:inner_product_star}
	\langle f , g\rangle_{H^{\alpha,\ast}_{\textnormal{mix}}} := 	\sum\limits_{\bj \in \{0,\alpha\}^d}\langle D^{(\bj)}f,D^{(\bj)}g \rangle_{L_2(\tor^d)}\,.
\end{equation}
Defining the weight 
\begin{equation}\label{norm2}
	w_{\alpha,\ast}(k) = (1+(2\pi |k|)^{2\alpha})^{1/2}\,,\quad k\in \Z\,,
\end{equation}
and the univariate kernel function 
\begin{equation}\nonumber
	K^1_{\alpha,\ast}(x,y):=\sum\limits_{k\in \Z} \frac{\exp(2\pi\mathrm{i}k(y-x) )}{w_{\alpha,\ast}(k)^2}\,,\quad x,y\in \tor\,,
\end{equation}
directly leads to
\begin{equation}\label{kerneld}
		K^d_{\alpha,\ast}(\bx,\by):=	K^1_{\alpha,\ast}(x_1,y_1)\otimes \cdots \otimes 	K^1_{\alpha,\ast}(x_d,y_d)\,,\quad \bx,\by \in \tor^d\,,
\end{equation}
which is a reproducing kernel for $H^\alpha_{\textnormal{mix}}(\tor^d)$.  The Fourier series representation of
$K^d_{\alpha,\ast}(\bx,\by)$ is specified by
$$
	K^d_{\alpha,\ast}(\bx,\by):=\sum\limits_{\bk\in \Z^d} \frac{\exp(2\pi\mathrm{i}\,\bk\cdot(\by-\bx) )}{w_{\alpha,\ast}(k_1)^2\cdots w_{\alpha,\ast}(k_d)^2}
	= \sum\limits_{\bk\in \Z^d} \frac{\exp(2\pi\mathrm{i}\,\bk\cdot(\by-\bx))}{\prod_{j=1}^d (1+(2\pi |k_j|)^{2\alpha})} \,,\quad \bx,\by\in \tor^d\,.
$$
In particular, for any $f\in H^\alpha_{\textnormal{mix}}(\tor^d)$ we have 
$$
	f(\bx) = \langle f, K^d_{\alpha,\ast}(\bx,\cdot) \rangle_{H^{\alpha,\ast}_{\textnormal{mix}}}\,.
$$
The kernel defined in \eqref{kerneld} associated to the inner product \eqref{equ:inner_product_star} can be extended to the case of fractional smoothness $s>0$ replacing $\alpha$ by $s$ in \eqref{norm2}--\eqref{kerneld} which in turn leads to the inner product
\begin{equation*}%
 \langle f , g\rangle_{H^{s,\ast}_{\textnormal{mix}}} := \sum\limits_{\bk\in \Z^d} \hat{f}_{\bk} \, \overline{\hat{g}_{\bk}} \, \prod_{j=1}^d w_{s,\ast}(k_j)^2\,
\end{equation*}
and the corresponding norm
\begin{equation*}%
 \|f\|_{\ast}:=\|f\|_{H^{s,\ast}_{\textnormal{mix}}} := \left(\sum\limits_{\bk\in \Z^d} |\hat{f}_{\bk}|^2 \, \prod_{j=1}^d w_{s,\ast}(k_j)^2\right)^{1/2}\,.
\end{equation*}

The (ordered) sequence $(\lambda_j^\ast)_{j=1}^{\infty}$ of eigenvalues of the corresponding mapping $W = \Id^{\ast}\circ \Id$, where $\Id\colon H\big(K^d_{s,\ast}\big) \to L_2(\tor^d)$ is the non-increasing rearrangement of the numbers 
\begin{equation*}
	\Bigg\{\lambda_{\bk}^\ast:=\prod\limits_{j=1}^d w_{s,\ast}(k_j)^2=\prod\limits_{j=1}^d (1+(2\pi |k_j|)^{2s})^{-1} \colon \bk \in \Z^d\Bigg\}\,.
\end{equation*}
The corresponding orthonormal system $(e_j^\ast)_{j=1}^{\infty}$ in $H\big(K^d_{s,\ast}\big)$ is given by 
$$
	\Bigg\{e_{\bk}(\bx):=\exp(2\pi\mathrm{i}\,\bk\cdot \bx)\prod\limits_{j=1}^d (1+(2\pi |k_j|)^{2s})^{-1/2} \colon \bk \in \Z^d\Bigg\}\,.
$$
Consequently, the orthonormal system $(\eta_j^\ast)_{j=1}^{\infty}$ in $L_2(\tor^d)$ is the properly ordered classical Fourier system $\eta_{\bk}(\bx) = \exp(2\pi\mathrm{i}\,\bk\cdot \bx)$. 
This directly implies the following behavior of the quantity $N(m)$ defined in \eqref{f1b}. It holds 
\begin{equation*}
		N(m) = m-1\quad \text{and}\quad T_\ast(m) = \sum\limits_{j = m}^\infty \lambda_j^\ast = \sum\limits_{j = m} ^\infty (\sigma^\ast_j)^2\,.
\end{equation*}

\begin{remark}
It is possible to define a smoothness vector $\mathbf{s} = (s_1,...,s_d)$ to emphasize different smoothness in different coordinate directions. Such kernels will be denoted with $K_{\mathbf{s}}(\bx,\by)$. In \cite{KSU3} the authors establish preasymptotic error bounds which can be used for the least squares analysis as we will see below.  
\end{remark}

Recent estimates in \cite{Ku19} allow for determining uniform recovery guarantees with preasymptotic error bounds.
For this study, we need to change the kernel weight to a less natural, but for preasymptotic considerations more convenient
structure of the weight
\begin{equation}\label{equ:weight_pound}
 w_{s,\#}(k) = (1+2\pi |k|)^{s} \,,\quad k\in \Z\,,
\end{equation}
$s>0$.
As a consequence, the univariate kernel
\begin{equation*}
	K^1_{s,\#}(x,y):=\sum\limits_{k\in \Z} \frac{\exp(2\pi\mathrm{i}k(y-x) )}{w_{s,\#}(k)^2}\,,\quad x,y\in \tor\,,
\end{equation*}
as well as the tensor product kernel
\begin{equation*}
 K^d_{s,\#}(\bx,\by) := \sum\limits_{\bk\in \Z^d} \frac{\exp(2\pi\mathrm{i}\,\bk\cdot(\by-\bx))}{\prod_{j=1}^d (1+2\pi |k_j|)^{s}}\,,\quad \bx,\by\in \tor^d\,,
\end{equation*}
changes and has modified Fourier series expansions.
Of course, the weight $w_{s,\#}$ yields an equivalent norm
$$
 \|f\|_{\#}:=\|f\|_{H^{s,\#}_{\textnormal{mix}}(\tor^d)} = \left(\sum\limits_{\bk\in \Z^d} |\hat{f}_{\bk}|^2 \prod_{j=1}^d (1+2\pi |k_j|)^{2s} \right)^{1/2}
$$
in the space $H^s_{\textnormal{mix}}(\tor^d)$.
However, from a complexity theoretic point of view, it is worth noting the difference of both approaches. The respective unit balls belonging to both norms differ significantly since the equivalence constants for both norms may depend badly on $d$.
Moreover, we stress on the fact that the non-increasing rearrangements of the eigenvalues
$(\lambda_j^\#)_{j=1}^{\infty}$ of the mapping $W = \Id^{\ast}\circ \Id$, where $\Id\colon H\big(K^d_{s,\#}\big) \to L_2(\tor^d)$, differs from $(\lambda_j^\ast)_{j=1}^{\infty}$ since
the associated mappings $j\to\bk$ do not coincide.
Accordingly, the sampling operators~$S_\bX^{m,\ast}$ and~$S_\bX^{m,\#}$ are defined with respect to the different non-increasing rearrangements of the basis functions $\eta_{\bk}$, and thus, may also differ.

Despite the fact that there are many more different equivalent norms for $H^s_{\textnormal{mix}}(\tor^d)$, we will use only the mentioned ones in order to apply advantageous
known upper bounds on the eigenvalues $\lambda_j^{\ast}$ and $\lambda_j^\#$. In the regime of interest here, the recovery of periodic functions with mixed smoothness, we even have preasymptotic bounds for these eigenvalues/singular values available (see \cite{KuSiUl15, Ku19, Kr18}).
Note that theoretically everything is known about the singular values $\sigma_m^\square$, $\square\in\{\ast,\#\}$, since the behavior of this sequence is determined by the non-increasing rearrangement of the reciprocals of the tensor product weights $\prod_{j=1}^d w_{s,\square}(k_j)$, cf.~\eqref{norm2} and~\eqref{equ:weight_pound}.
The analysis of the rearrangement of multi-indexed sequences has been revealed
that the upper bound
\begin{equation}
\sigma_m^\#\le\min\left(1,\frac{16}{3m}\right)^{\frac{s}{1+\log_2d}}\le \left(\frac{16}{3m}\right)^{\frac{s}{1+\log_2d}}
\label{eq:sigma_upper_bound_kuehn}
\end{equation}
holds for all $m\in\N$, cf.~\cite[Thm.\ 4.1]{Ku19}.

One may argue that the kernel $K^d_{s,\#}$ is less ``natural'' than the kernel~$K^d_{s,*}$. For this purpose  we use the observation                                                                   by
Krieg~\cite{Kr18},
which yields the preasymptotic bound
$$
	\sigma_{m}^\ast \leq \Bigg(\frac{1.26}{m}\Bigg)^{\frac{1.83\cdot s}{4+\log_2(d)}}
$$
in the range $2\leq m\leq 3^d$, 
where the exponent scales also linearly in $s$.

\subsection{Uniform recovery of functions from $H^s_{\textnormal{mix}}(\tor^d)$}
\label{sect:uniform}
First, we consider the asymptotic behavior of the sampling error caused by the presented least squares approach. Since the asymptotic bounds on $\lambda_j^\ast$ and $\lambda_j^\#$ differ only by constants, that we do not specify explicitly, we study both cases collectively. For $\square\in\{\ast,\#\}$, the
asymptotic behavior of the sequence $(\sigma_j^\square)_{j=1}^\infty$ for the embedding $\Id:H(K_{s,\square}^d) \to L_2(\tor^d)$ has been known for a long time, cf. \cite[Chapt.\ 4]{DuTeUl19} and the references therein. There is a constant $\tilde{C}_d^\square$ which depends exponentially on $d$ such that 
\begin{equation}\label{f27}
		\sigma_m^\square \leq \tilde{C}_d^\square\, m^{-s}(\log m)^{s(d-1)}\,, \quad m\in \N\,.
\end{equation}

As a direct consequence of Corollary~\ref{cor10} we determine 
explicit error bounds as well as the asymptotic error behavior~\eqref{equ:fourier:wc_sampl_error}, where the latter holds for all equivalent norms in $H^s_{\textnormal{mix}}(\T^d)$. Please note 
that the constants $C_d^\square$ depend heavily on the specific norm. 
Moreover, a statement similar to~\eqref{equ:fourier:wc_sampl_error} can be also obtained with the technique in \cite{KrUl19}.

\begin{theorem}\label{thm:asymp_hsmix} Let $d\in \N, s>1/2$, $\delta>0$ and $n\in \N$ be given. Choose $m\in\N$, $m\ge 2$, 
$$m \leq \left\lfloor\frac{n}{48(\sqrt{2}\log(2n)-\log\delta)}\right\rfloor+1\,
$$
and draw the $n$ sampling nodes in $\bX$ uniformly i.i.d.\ at random in $\T^d$.
Then, we achieve
\begin{align*}
 \Prob\Bigg(\sup\limits_{\|f\|_{\square}\leq 1} \|f-S_\bX^{m,\square} f\|^2_{L_2(\T^d)}
  \leq \frac{29}{\delta}\,\max\Bigg\{(\sigma_m^\square)^2,\frac{\log(8n)}{n}\sum_{k=m}^\infty(\sigma_k^\square)^2	
 \Bigg\}\Bigg) \geq 1-3\delta\,.
\end{align*}
In particular, there is a constant $C_d^\square>0$ depending on $d$ such that for $0<\delta<1/3$ it holds
\begin{equation}\label{equ:fourier:wc_sampl_error}
  \Prob\Bigg(\sup\limits_{\|f\|_{\square}\leq 1} \|f-S_\bX^{m,\square} f\|_{L_2(\tor^d)}\\
  \leq \frac{C_d^\square}{\sqrt{\delta}}n^{-s}(\log n)^{sd}\Bigg) \geq 1-3\delta\,.
\end{equation}
\end{theorem}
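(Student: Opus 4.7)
The plan is to verify that both periodic kernels $K^d_{s,\square}$, $\square\in\{\ast,\#\}$, fit the framework of Corollary~\ref{cor10} with especially favorable constants, and then to reduce the asymptotic bound~\eqref{equ:fourier:wc_sampl_error} to a short calculation using the singular value estimate~\eqref{f27}.

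First I would observe that for both choices of kernel the eigenfunctions of the embedding $\Id\colon H(K^d_{s,\square})\to L_2(\T^d)$ are (a reordering of) the complex Fourier basis $\eta_{\bk}(\bx) = \exp(2\pi\mathrm{i}\,\bk\cdot \bx)$; the two cases differ only in the associated eigenvalues. This is a uniformly bounded orthonormal system with BOS constant $B=1$, independent of $d$. Consequently $N(m)=m-1$ no matter how the indices are reordered, so the choice of $m$ in the theorem is exactly the reformulation of~\eqref{f1}. Moreover, from $|e_k(\bx)|^2=(\sigma_k^\square)^2|\eta_k(\bx)|^2\leq(\sigma_k^\square)^2$ pointwise I obtain
\begin{equation*}
T(m)=\sup_{\bx\in\T^d}\sum_{k\geq m}|e_k(\bx)|^2 \;\leq\; \sum_{k\geq m}(\sigma_k^\square)^2.
\end{equation*}
Feeding these two facts into Corollary~\ref{cor10} yields the first probability bound, with $29$ replacing the sharper absolute constant $3+8\,C_{\mathrm{R}}^2+4\,C_{\mathrm{R}}<28.05$.

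For~\eqref{equ:fourier:wc_sampl_error} I would insert~\eqref{f27}, i.e.\ $\sigma_m^\square\leq\tilde{C}_d^\square\, m^{-s}(\log m)^{s(d-1)}$. Since $s>1/2$, a standard integral comparison gives
\begin{equation*}
\sum_{k\geq m}(\sigma_k^\square)^2 \;\leq\; C_{s,d}\, m^{-2s+1}(\log m)^{2s(d-1)}
\end{equation*}
for a constant $C_{s,d}$. Choosing $m$ of maximal admissible size, so that $m\asymp n/\log n$ (with the $\delta$-dependence absorbed into the constant), both terms inside the $\max$ in the first assertion evaluate to order $n^{-2s}(\log n)^{2sd}$: indeed $m^{-2s}(\log m)^{2s(d-1)}\asymp n^{-2s}(\log n)^{2sd}$, while $\frac{\log n}{n}\,m^{-2s+1}(\log m)^{2s(d-1)}$ collapses to the same order after expanding $m\asymp n/\log n$. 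Taking square roots, pulling the resulting $\delta^{-1/2}$ outside, and lumping the remaining $s$- and $d$-dependent factors into a single constant $C_d^\square$ then produces~\eqref{equ:fourier:wc_sampl_error}.

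The argument is essentially a direct specialization of Corollary~\ref{cor10} using $B=1$, so no new probabilistic machinery is required. The only step demanding a little care is the passage from the pointwise bound on $\sigma_m^\square$ to the tail sum $\sum_{k\geq m}(\sigma_k^\square)^2$ with the correct logarithmic exponent $(\log m)^{2s(d-1)}$, since this exponent ultimately propagates to the $(\log n)^{sd}$ factor in the final estimate; I would verify explicitly that the integral test introduces no additional polynomial-in-$\log m$ loss.
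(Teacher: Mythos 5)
Your proposal is correct and follows essentially the same route as the paper: both reduce the statement to Corollary~\ref{cor10} using that the Fourier system has BOS constant $B=1$, hence $N(m)=m-1$ and $T_\square(m)=\sum_{k\ge m}(\sigma_k^\square)^2$, and then derive \eqref{equ:fourier:wc_sampl_error} from the decay \eqref{f27} of the singular numbers together with $m\asymp n/\log n$ (the paper quotes the two-sided asymptotics of the tail sum from the literature where you use an integral comparison, but the resulting order $n^{-2s}(\log n)^{2sd}$ for both terms in the $\max$ is the same).
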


\begin{proof} The result follows from Corollary \ref{cor10} and our specific situation where $B=1$, $N(m)=m-1$ and $T_\square (m)=\sum_{k=m}^\infty(\sigma_k^\square)^2$.
We estimate $T_\square(m)$ using \cite{CoKuSi16}
$$
	T_\square(m) := \sup\limits_{\bx \in D}\sum\limits_{k=m}^{\infty}|e_k(\bx)|^2=
	\sum\limits_{k = m}^{\infty} (\sigma_k^\square)^2
	\asymp m^{-2(s-1/2)}(\log m)^{2(d-1)s} \asymp m(\sigma_m^\square)^2\,.
$$
Hence, the right-hand side in \eqref{eq:cor_main_prob} can be bounded from above by a constant times
$\sigma_m^\square$, which behaves as $n^{-s}(\log n)^{sd}$\,.
\end{proof}

In addition, we investigate the preasymptotic error behavior using the aforementioned
estimates \eqref{eq:sigma_upper_bound_kuehn} on the singular values $\sigma_m^\#$ that
belongs to $\Id\colon H\big(K^d_{s,\#}\big) \to L_2(\tor^d)$. Since the upper bounds
have been proven only for this specific type of mappings, the following results, in particular the explicitly determined constants, may only hold for RKHS with weight functions
$w_s(\bk)\ge\prod_{j=1}^d(1+|k_j|)^s$, which is fulfilled for $w_{s,\#}$.

\begin{theorem}\label{thm:preasymp_hsmix}
Let $d\in \N$, $s>(1+\log_2d)/2$, $0<\delta<1$, and $n\in \N$ such that   
\begin{equation}\label{choos_m}
m:= \left\lfloor\frac{n}{48(\sqrt{2}\log(2n)-\log{\delta})}\right\rfloor+1
\end{equation}
is at least $2$. Drawing the $n$ sampling nodes in $\bX$ uniformly i.i.d.\ at random in $\T^d$ yields
\begin{align*}
&\Ept\Bigg(\sup\limits_{\|f\|_{\#}\leq 1} \|f-S^{m,\#}_\bX f\|^2_{L_2(\tor^d)}\Big|\|\mathbf{H}_{m,\#}-\mathbf{I}_m\| \leq 1/2\Bigg) \!\le
\frac{29}{6\,(1-\delta)}\frac{2s}{2s-1-\log_2d}\left(\frac{16}{3m}\right)^{\frac{2s}{1+\log_2d}}.
\end{align*}
\end{theorem}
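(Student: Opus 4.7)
The plan is to apply Theorem~\ref{thm9} in its detailed form~\eqref{eq:Ept_sum_detail} to the periodic Fourier setting, and then combine K\"uhn's preasymptotic singular value bound~\eqref{eq:sigma_upper_bound_kuehn} with an integral tail estimate and the explicit choice~\eqref{choos_m} of $m$.

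First I would verify the hypotheses of Theorem~\ref{thm9}. Since $(\eta_{\bk})_{\bk\in\Z^d}$ is the complex Fourier system, it is a BOS with constant $B=1$, so $N(m)=m-1$ and the choice~\eqref{choos_m} is exactly condition~\eqref{f1}. Moreover $|\eta_{\bk}(\bx)|=1$ pointwise yields $T_\#(m)=\sum_{k\ge m}(\sigma_k^\#)^2$. Hence the detailed bound~\eqref{eq:Ept_sum_detail} reads
\[
\Ept\bigl(\textstyle\sup_{\|f\|_\#\le 1}\|f-S^{m,\#}_\bX f\|_{L_2(\T^d)}^2\,\bigl|\,\|\mathbf{H}_{m,\#}-\mathbf{I}_m\|\le\tfrac12\bigr)\le\frac{3(\sigma_m^\#)^2+8C_R^2\tfrac{\log(8n)}{n}T_\#(m)+4C_R\sigma_m^\#\sqrt{\tfrac{\log(8n)}{n}T_\#(m)}}{1-\delta}.
\]

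Second, write $\alpha:=2s/(1+\log_2 d)>1$, so that \eqref{eq:sigma_upper_bound_kuehn} gives $(\sigma_k^\#)^2\le(16/3k)^{\alpha}$ for every $k\in\N$. An integral comparison of the form $\sum_{k\ge m}k^{-\alpha}\lesssim m^{1-\alpha}/(\alpha-1)$ then produces $T_\#(m)\le (16/3m)^{\alpha}\cdot m/(\alpha-1)$ up to an absolute constant; the factor $1/(\alpha-1)$ is precisely the source of the target prefactor $2s/(2s-1-\log_2d)=\alpha/(\alpha-1)$. Next, I would convert~\eqref{choos_m} into a bound on the oversampling factor: from $m\ge 2$ one has $m\le 2(m-1)\le n/(24(\sqrt{2}\log(2n)-\log\delta))$, and combining with $\log(8n)\le 3\log(2n)\le (3/\sqrt{2})(\sqrt{2}\log(2n)-\log\delta)$ (using $-\log\delta\ge 0$) yields $m\log(8n)/n\le\sqrt{2}/16$. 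Consequently $(\log(8n)/n)\,T_\#(m)\lesssim (16/3m)^{\alpha}\cdot\sqrt{2}/(16(\alpha-1))$.

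Finally, I would substitute both inequalities into the three-term expression, factor out $(16/3m)^{\alpha}\cdot\alpha/(\alpha-1)$, and treat the cross-term $\sigma_m^\#\sqrt{(\log(8n)/n)T_\#(m)}$ by AM--GM. This reduces the claim to an elementary scalar inequality in the single variable $t:=1/(\alpha-1)\in(0,\infty)$, which one verifies uniformly in $\alpha>1$ using the numerical values $C_R\in[1.53,1.54]$. The main obstacle I expect is the constant bookkeeping: the improvement from the universal prefactor $29=3+8C_R^2+4C_R$ of Theorem~\ref{thm9} down to $29/6$ here relies on jointly exploiting the oversampling estimate $m\log(8n)/n\le\sqrt{2}/16$ and the tail factor $1/(\alpha-1)$, and ensuring that the cross-term does not spoil this gain for all admissible $\alpha>1$ and $m\ge 2$ requires a careful but elementary quadratic estimate in $\sqrt{t}$.
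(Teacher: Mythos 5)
Your overall strategy coincides with the paper's: apply the detailed bound \eqref{eq:Ept_sum_detail} with $N(m)=m-1$ and $T_\#(m)=\sum_{k\ge m}(\sigma_k^\#)^2$, estimate the tail via \eqref{eq:sigma_upper_bound_kuehn} and an integral comparison (the sharp form is $\sum_{k\ge m}k^{-\beta}\le \frac{\beta}{\beta-1}\,m^{1-\beta}$, which is exactly where the prefactor $\frac{\beta}{\beta-1}=\frac{2s}{2s-1-\log_2 d}$ comes from), and then bound the remaining numerical bracket using the choice \eqref{choos_m}. The problem is that the final step, as you set it up, fails quantitatively. Your oversampling estimate $m\log(8n)/n\le \sqrt2/16\approx 0.0884$, obtained by doubling ($m\le 2(m-1)$) and by $\log(8n)\le 3\log(2n)$, is too lossy by a factor of about $2.5$: inserting it into the bracket gives $3+8C_{\mathrm{R}}^2\cdot\tfrac{\sqrt2}{16}+4C_{\mathrm{R}}\sqrt{\tfrac{\sqrt2}{16}}\approx 3+1.67+1.83=6.50$, which exceeds $29/6\approx 4.83$. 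The spare factor $\beta/(\beta-1)\ge 1$ cannot absorb the overshoot, because it tends to $1$ as $\beta\to\infty$; so the ``elementary scalar inequality in $t=1/(\alpha-1)$'' that you propose to verify uniformly is in fact false near $t=0$ (large smoothness). With your constants the argument only yields a prefactor of about $6.5$ in place of $29/6$.

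The missing ingredient is a sharper treatment of the floor and the $+1$ in \eqref{choos_m}: the paper keeps the $+1$ additive, $m\le \frac{n}{48\sqrt2\log(2n)}+1$ (using $-\log\delta\ge 0$), so that $\frac{m\log(8n)}{n}\le b(n):=\frac{\log(8n)}{48\sqrt2\log(2n)}+\frac{\log(8n)}{n}$; it then observes that $b$ is monotonically decreasing in $n$, that the requirement $m\ge 2$ forces $n\ge 464$, and that $b(464)\approx 0.0354$, for which the bracket evaluates to $\approx 4.827<29/6$. You would also need to remove the ``up to an absolute constant'' in your tail estimate and use the exact bound $T_\#(m)\le c^{\beta}\frac{\beta}{\beta-1}m^{1-\beta}$ with $c=16/3$; since the target constant is tight to within about $0.1\%$ at $n=464$, any unaccounted absolute constant in that step likewise destroys the claimed bound.
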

\begin{proof}
We apply Theorem~\ref{thm9} and take into account that we choose 
$$
m := \left\lfloor\frac{n}{48(\sqrt{2}\log(2n)-\log{\delta})}\right\rfloor+1\le\frac{n}{48\sqrt{2}\log(2n)}
+1$$
for large enough $n$. Furthermore,
we set $c:=16/3$, $\beta:=2s/(1+\log_2d)$ and estimate $T_\#(m)$, see \eqref{eq:sigma_upper_bound_kuehn},
\begin{align*}
	T_\#(m) &:= \sup\limits_{\bx \in D}\sum\limits_{k=m}^{\infty}|e_k(\bx)|^2=
	\sum\limits_{k = m}^{\infty} (\sigma_k^\#)^2
	\le c^{\beta}\sum_{k=m}^\infty k^{-\beta}\\
	&\le c^{\beta}\left(
	m^{-\beta}+\frac{1}{\beta-1}m^{-\beta+1}\right)
	\le c^{\beta}\frac{\beta}{\beta-1}m^{-\beta+1}\,.
\end{align*}
Taking \eqref{eq:Ept_sum_detail} into account, we bound
\begin{align*}
&\Ept\Bigg(\sup\limits_{\|f\|_{\#}\leq 1} \|f-S^{m,\#}_\bX f\|^2_{L_2(\tor^d)}\Big|\|\mathbf{H}_{m,\#}-\mathbf{I}_m\| \leq 1/2\Bigg)\\
&\quad\le
\frac{c^\beta m^{-\beta}}{1-\delta}\left(3+8\,C_{\mathrm{R}}^2\frac{m\log (8n)}{n}\frac{\beta}{\beta-1}+4\,C_{\mathrm{R}}\,\sqrt{\frac{m\log (8n)}{n}\frac{\beta}{\beta-1}}\right)\\
&\quad\le 
\frac{\beta\,c^\beta m^{-\beta}}{(1-\delta)(\beta-1)}\left(3+8\,C_{\mathrm{R}}^2
b+4\,C_{\mathrm{R}}\,\sqrt{b}\right)\,,
\end{align*}
where $b:=\frac{\log (8n)}{48\sqrt{2}\log(2n)}+\frac{\log (8n)}{n}$.
The term in the brackets is monotonically decreasing in~$n$. We stress that the last estimates are reasonable for $m\ge 2$ and thus, we need at least $n\ge 464$. This choice of $n$ leads to an upper bound of the term within the brackets which is $29/6$. Thus, for $m\ge2$, the estimate
\begin{align*}
&\Ept\Big(\sup\limits_{\|f\|_{\#}\leq 1} \|f-S^{m,\#}_\bX f\|^2_{L_2(\tor^d)}\Big|\|\mathbf{H}_{m,\#}-\mathbf{I}_m\| \leq 1/2\Big)\le
\frac{29\,\beta\,c^\beta m^{-\beta}}{6\,(1-\delta)(\beta-1)}
\end{align*}
holds and the assertion follows.
\end{proof}
Similar to Corollary~\ref{cor10}, we apply Markov's inequality to get a lower bound on the success probability of the randomly chosen sampling set.
\begin{corollary}\label{cor:hsmix_preasymp}
Under the same assumptions as in Theorem~\ref{thm:preasymp_hsmix} it holds
\begin{equation*}
  \begin{split}
  \Prob\Bigg(\sup\limits_{\|f\|_{\#}\leq 1} &\|f-S^{m,\#}_\bX f\|^2_{L_2(\T^d)}\\
  &\leq \frac{29}{6\,\delta}\frac{2s}{2s-1-\log_2d}\left(\frac{256(\sqrt{2}\log(2n)-\log\delta)}{n}\right)^{\frac{2s}{1+\log_2d}}
  \Bigg)\\
  \ge\Prob\Bigg(&\sup\limits_{\|f\|_{\#}\leq 1}\|f-S^{m,\#}_\bX f\|^2_{L_2(\T^d)}
  \leq \frac{29}{6\,\delta}\frac{2s}{2s-1-\log_2d}\left(\frac{16}{3m}\right)^{\frac{2s}{1+\log_2d}}
  \Bigg)  
   \geq 1-3\delta\,.
 \end{split}
\end{equation*}
\end{corollary}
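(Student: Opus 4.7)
The plan is to run essentially the same argument as in the proof of Corollary~\ref{cor10}, combining Markov's inequality with the conditional expectation bound from Theorem~\ref{thm:preasymp_hsmix}, and then to compare the $m$-bound and the $n$-bound by using the explicit choice of $m$ from~\eqref{choos_m}.

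First I would dispose of the outer inequality between the two probabilities. Writing $\beta := 2s/(1+\log_2 d)$, the choice in~\eqref{choos_m} gives
\[
m \;=\; \left\lfloor\tfrac{n}{48(\sqrt 2\log(2n)-\log\delta)}\right\rfloor+1 \;>\; \tfrac{n}{48(\sqrt 2\log(2n)-\log\delta)}\,,
\]
so $16/(3m) < 256(\sqrt 2\log(2n)-\log\delta)/n$ and therefore $(16/(3m))^\beta < (256(\sqrt 2\log(2n)-\log\delta)/n)^\beta$. Hence the event appearing in the second probability is contained in the event appearing in the first, which immediately yields the inequality between the two probabilities.

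Next I would prove the lower bound $1-3\delta$ exactly as in the proof of Corollary~\ref{cor10}. Define
\[
A := \Big\{\bX\colon \sup_{\|f\|_{\#}\le 1}\|f-S_\bX^{m,\#}f\|_{L_2(\T^d)}^2 \le t\Big\}\,,\qquad
B := \{\bX\colon \|\mathbf H_{m,\#}-\mathbf I_m\|\le 1/2\}
\]
with threshold
\[
t := \frac{29}{6\,\delta}\cdot\frac{2s}{2s-1-\log_2 d}\Big(\tfrac{16}{3m}\Big)^{\beta}.
\]
Splitting $\Prob(A) = 1-\Prob(A^\complement\cap B)-\Prob(A^\complement\cap B^\complement)$, the second summand is bounded by $\Prob(B^\complement) \le \delta$ thanks to Proposition~\ref{prop1} and the choice~\eqref{choos_m} (which enforces~\eqref{f1}). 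For the first summand we use $\Prob(A^\complement\cap B) \le \Prob(A^\complement\mid B)$ and apply the conditional Markov inequality~\eqref{eq:markov_cond} together with Theorem~\ref{thm:preasymp_hsmix}:
\[
\Prob(A^\complement\mid B) \;\le\; \frac{\Ept\!\left(\sup_{\|f\|_\#\le 1}\|f-S_\bX^{m,\#}f\|_{L_2(\T^d)}^2\,\big|\,B\right)}{t} \;\le\; \frac{\delta}{1-\delta}\,.
\]
Since $\delta<1/3$ we have $\delta/(1-\delta)\le 2\delta$, and adding the two contributions gives $\Prob(A^\complement)\le 3\delta$, i.e.\ $\Prob(A)\ge 1-3\delta$.

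There is no genuine obstacle here; the work was done in Theorem~\ref{thm:preasymp_hsmix}. The only mildly delicate point is checking the monotonicity step $m > n/(48(\sqrt 2\log(2n)-\log\delta))$ that converts the $m$-rate into the explicit $n$-rate, and ensuring that the constants from Theorem~\ref{thm:preasymp_hsmix} are transferred verbatim through the Markov step so that the same factor $29/(6\delta)\cdot 2s/(2s-1-\log_2 d)$ appears.
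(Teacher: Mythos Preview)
Your proof is correct and follows essentially the same approach as the paper: the paper's proof simply says to repeat the Markov-inequality argument of Corollary~\ref{cor10} together with the inequality $m^{-1}\le 48(\sqrt{2}\log(2n)-\log\delta)/n$, and you have spelled out exactly these two ingredients in full detail. The only cosmetic point is that the hypothesis is $0<\delta<1$, not $\delta<1/3$; but since the bound $1-3\delta$ is trivial for $\delta\ge 1/3$, your restriction is harmless.
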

\begin{proof}
We follow the argumentation in the proof of Corollary~\ref{cor10} using
the inequality
\begin{align*}
m^{-1}\le\frac{48(\sqrt{2}\log(2n)-\log\delta)}{n}\,.
\end{align*}
\end{proof}

\begin{example}
For $s=5$ and $d=16$, we would like to fulfill
\begin{equation*}
  \Prob\Bigg(\sup\limits_{\|f\|_{\#}\leq 1} \|f-S^{m,\#}_\bX f\|_{L_2(\T^d)}\leq 0.1
  \Bigg) \geq 0.99\,,
\end{equation*}
i.e., with $\delta=1/300$ we choose $m:=2\,873$ smallest possible such that
$$
2900 \left(\frac{16}{3m}\right)^{2}\le 0.01
$$
holds. Clearly, for $n$ such that $m-1=2\,872\le\frac{n}{48(\sqrt{2}\log(2n)-\log{\delta})}$ holds,
we observe the desired estimate. We choose the smallest possible $n:=3\,879\,166$.
\end{example}

\subsection{Numerical integration of periodic functions}\label{Num_int_per}
In \cite[Sec.~4.2]{Oe17}, the author discussed the construction of stable cubature weights for the approximation of the integral
$$
	\operatorname{I}(f):=\int_{\tor^d} f(\bx)\,\mathrm{d}\bx \approx \operatorname{Q}_\bX^mf := \sum\limits_{j=1}^n q_jf(\bx^j)
$$
for functions from periodic Sobolev spaces with dominating mixed smoothness. The integration nodes $\bX$ are drawn uniformly and independently at random from $\T^d$. Below, in Corollary~\ref{cor:fourier:int_error_wc}, the cubature rule $\operatorname{Q}_\bX^m$ is fixed for the whole class.
The result in \cite[Thm.~4.5]{Oe17} bounds the worst-case integration error from above by
$$
\lesssim_{d,s}n^{-s+1/2}(\log n)^{sd-1/2}.
$$
Corresponding numerical tests promise better behavior of the integration error, cf.\ \cite[Rem.~4.6]{Oe17}.
Our theoretical results of this section confirm that the optimal main rate of the presented approach in~\cite{Oe17} is $n^{-s}$, in particular
we obtain the upper bound $\lesssim_{d,s}n^{-s}(\log n)^{sd} $ for this specific setting. We achieve the following statement on the worst-case integration error.

\begin{corollary}\label{cor:fourier:int_error_wc}
Let $d\in \N, s>1/2$ and $0<\delta<1/3$. We choose $n\in \N$ such that $m$ as stated in \eqref{choos_m} is at least $2$.
Drawing the $n$ sampling nodes in $\bX$ uniformly i.i.d.\ at random from $\T^d$, we put the cubature weight vector $\boldsymbol{q}$ to be the first column of $\overline{\bL_m}\, \overline{(\bL_m^{\ast}\,\bL_m)^{-1}}.$
Then, with probability at least $1-3\delta$, we obtain
\begin{align*}
  \sup\limits_{\|f\|_{\square}\leq 1} |I(f)-\operatorname{Q}_\bX^mf|
  \leq \sqrt{\frac{29}{\delta}}\,\max\left\{\sigma_m^\square,\sqrt{\frac{\log(8n)}{n}\sum_{k=m}^\infty(\sigma_k^\square)^2}	
 \right\}\,.
\end{align*}
In particular, there is a constant $C_d^\square>0$ depending on $d$ such that for $0<\delta<1/3$ it holds with probability $1-3\delta$
\begin{equation}\label{equ:fourier:wc_int_error}
  \sup\limits_{\|f\|_{\square}\leq 1} |I(f)-\operatorname{Q}_\bX^mf|\\
  \leq \frac{C_d^\square}{\sqrt{\delta}}n^{-s}(\log n)^{sd}.
\end{equation}
\end{corollary}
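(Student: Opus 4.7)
The plan is to recognize that this corollary is essentially a direct specialization of Theorem~\ref{thm:int_error} to the periodic Fourier setting, with the asymptotic part following from known decay rates of the singular values. First I would verify the setup: on $\T^d \simeq [0,1)^d$ the Lebesgue measure is a probability measure, and it serves simultaneously as the sampling measure $\varrho_D$ and as the integration measure $\mu_D$, so the embedding constant in Theorem~\ref{thm:int_error} satisfies $C_{\varrho,\mu} = 1$. The Fourier eigensystem $\eta_\bk(\bx) = \exp(2\pi \mathrm{i}\, \bk \cdot \bx)$ is a BOS with constant $B = 1$, so $N(m) \leq m - 1$ and the prescribed $m$ in~\eqref{choos_m} satisfies the sampling condition~\eqref{f1}. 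Moreover $|e_k(\bx)| = \sigma_k^\square\,|\eta_k(\bx)| = \sigma_k^\square$, hence $T_\square(m) = \sum_{k=m}^\infty (\sigma_k^\square)^2$, which explains the precise form appearing in the right-hand side of the claimed bound.

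Next I would check that the weight vector $\boldsymbol{q}$ described in the corollary matches the implicit integration weights of Section~\ref{Numint}. Order the Fourier basis so that $\eta_1 \equiv 1$ (consistent with $\sigma_1^\square = 1$ being the largest singular value, corresponding to $\bk = \boldsymbol{0}$). Then $b_k = \int_{\T^d} \eta_k(\bx)\,\mathrm{d}\bx = \delta_{k,1}$, so the vector $\boldsymbol{b}$ in~\eqref{eq:compute_int_weights:b} is the first standard basis vector in $\C^{m-1}$, and $\boldsymbol{q} = \overline{\bL_m}\,\overline{(\bL_m^{\ast}\bL_m)^{-1}}\,\boldsymbol{b}$ is exactly the first column of $\overline{\bL_m}\,\overline{(\bL_m^{\ast}\bL_m)^{-1}}$. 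With these identifications Theorem~\ref{thm:int_error} is directly applicable and, after taking square roots, yields the first inequality with probability at least $1-3\delta$.

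For the asymptotic consequence~\eqref{equ:fourier:wc_int_error} I would insert the classical estimate~\eqref{f27}, namely $\sigma_m^\square \leq \tilde{C}_d^\square\,m^{-s}(\log m)^{s(d-1)}$, together with $\sum_{k=m}^\infty (\sigma_k^\square)^2 \asymp m\,(\sigma_m^\square)^2$ (the asymptotic relation from \cite{CoKuSi16} already exploited in the proof of Theorem~\ref{thm:asymp_hsmix}). Both arguments of the $\max$ are then of the same order, and the substitution $m \asymp n/\log n$ coming from~\eqref{choos_m} produces the bound $n^{-s}(\log n)^{sd}$ with a $d$-dependent constant $C_d^\square$. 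I do not foresee a real obstacle: all probabilistic work is absorbed into Theorem~\ref{thm:int_error}, and all spectral asymptotics are documented in Section~\ref{mixed1}; the proof is essentially a bookkeeping exercise that specializes the general integration bound to the Fourier case with $C_{\varrho,\mu} = 1$ and $B = 1$.
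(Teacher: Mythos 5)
Your proposal is correct and follows essentially the same route as the paper: the authors simply apply Theorem~\ref{thm:int_error} with $\mu_{\T^d}=\varrho_{\T^d}\equiv 1$ (so $C_{\varrho,\mu}=1$) and then invoke \eqref{f27}. Your additional bookkeeping --- checking $N(m)=m-1$, $T_\square(m)=\sum_{k\ge m}(\sigma_k^\square)^2$, $b_k=\delta_{k,1}$ so that $\boldsymbol{q}$ is indeed the first column, and $\sum_{k\ge m}(\sigma_k^\square)^2\asymp m(\sigma_m^\square)^2$ --- is exactly the content the paper leaves implicit.
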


\begin{proof}
We apply Theorem~\ref{thm:int_error} with $\mu_{\tor^d} = \varrho_{\tor^d} \equiv 1$
followed by \eqref{f27}.
\end{proof}

\begin{remark} By the same reasoning, the result in Theorem \ref{thm:preasymp_hsmix} transfers almost literally to the integration problem. In fact, having $s>(1+\log_2 d)/2$ we see a non-trivial preasymptotic behavior. The above bounds show that this method based on random points competes with most of the quasi-Monte-Carlo methods studied in the literature, see \cite[pp.\ 195, 247]{DiKuSl13}. 
\end{remark}

\section{Hyperbolic wavelet regression}
\label{HWR}
The following scenario of replacing the Fourier system by dyadic wavelets has already been investigated by Bohn \cite[Sec.~5.5.2]{Bo17}, \cite{Bo18_1} using piecewise linear prewavelets. Here we use orthogonal wavelets and will improve the result in \cite{Bo17} in two directions. First, we remove a $d$-dependent $\log$-factor and second, our result holds for the whole class and not just one individual function, i.e., we control the worst-case error. It is worth mentioning, that we only loose a $\log$-factor which is independent of $d$ compared to the benchmark result in \cite{DeVKoTe98}.

\begin{sloppypar}
Let us start with the necessary definitions since we are now in a non-periodic setting. For $s>0$ let us define the space $H^s_{\textnormal{mix}}(\R^d)$ as the collection of all functions from $L_2(\R^d)$ such that 
$$
	\|f\|_{H^s_{\textnormal{mix}}(\R^d)} = \Bigg\|\Bigg(\prod\limits_{i=1}^d (1+|y_i|)^s\Bigg) \mathcal{F}f(\by)\Bigg\|_{L_2(\R^d)} < \infty\,. 
$$
Here $\mathcal{F}$ denotes the Fourier transform on $\R^d$ given by 
$$
  \mathcal{F}f(\bx) = \frac{1}{\sqrt{2\pi}^d}\int_{\R^d} f(\by)\exp(-\rm{i} \, \by \cdot\bx)\,\mathrm{d}\by\,,\quad \bx \in \R^d\,.
$$
It is well-known, that $H^s_{\textnormal{mix}}(\R^d)$ can be characterized 
using hyperbolic wavelets. Let $(\psi_{j,k})_{j\in \N_0, k\in \Z}$ be a univariate orthonormal wavelet system (if $j=0$ then $\psi_{0,k}$ denotes the orthogonal scaling function). Then we denote with
$$
	\psi_{\bj,\bk}(\bx) := \psi_{i_1,k_1}(x_1)\cdots\psi_{j_d,k_d}(x_d)\,,\quad \bx \in \R^d, \; \bj\in \N_0^d, \; \bk \in \Z^d\,,
$$
the corresponding hyperbolic wavelet basis in $L_2(\R^d)$\,. For our analysis we need that the univariate wavelet is a compactly supported wavelet, which means that $\psi_{j,k}$ is supported ``near'' the interval $[k2^{-j}, (k+1)2^{-j}]$. If the wavelet basis has sufficient smoothness and vanishing moments, then $f \in H^s_{\textnormal{mix}}(\R^d)$ holds if and only if 
$$
	\Bigg(\sum\limits_{\bj \in \N_0^d}\sum\limits_{\bk \in \Z^d}2^{2\|\bj\|_1s}|\langle f,\psi_{\bj,\bk}\rangle|^2\Bigg)^{1/2} <\infty\,. 
$$
This leads to the norm equivalence
\begin{equation}\label{norm_equ}
	\|f\|_{H^s_{\textnormal{mix}}(\R^d)} \asymp  	\left(\sum\limits_{\bj \in \N_0^d}\sum\limits_{\bk \in \Z^d}2^{2\|\bj\|_1s}|\langle f,\psi_{\bj,\bk}\rangle|^2\right)^{1/2}\,.
\end{equation}
Clearly, if $\|f\|_{\Hsm(\R^d)} \leq 1$, then the sequence $(2^{\|\bj\|_1s}\langle f,\psi_{\bj,\bk}\rangle)_{\bj,\bk}$ has an $\ell_2$-norm bounded by a constant, which will be important for our later analysis. 
\end{sloppypar}

Let us consider the unit cube $[0,1]^d$. Let further $D_{\bj}$ be the set of all $\bk \in \Z^d$ such that the wavelet $\supp \psi_{\bj,\bk}$ has a non-empty intersection with $[0,1]^d$. This directly leads to the extended domain $\Omega$ given by 
$$
	\Omega:=\bigcup\limits_{\bj \in \N_0^d} \bigcup\limits_{\bk \in D_{\bj}} \supp \psi_{\bj,\bk}\,.
$$
It holds $[0,1]^d \subset \Omega$, and the system $(\psi_{\bj,\bk})_{\bj \in \N_0^d,\bk \in D_{\bj}}$ is an orthonormal system in $L_2(\Omega)$, however not a basis. Note that $\Omega$ is still a bounded tensor domain with a measure proportional to~1 depending on the support length of the wavelet basis. It is also clear that this orthonormal system is not uniformly bounded in $L_\infty$.

In the sequel we want to recover functions $f\in H^s_{\textnormal{mix}}(\R^d)$ on the domain $[0,1]^d$ from samples on the slightly larger extended domain $\Omega$ in a uniform way. In other words, the discrete locations of the sampling nodes
$\bX = \{\bx^1,...,\bx^n\}$ are chosen in advance for the whole class of functions.
Let us consider the operator 
$$
	\tilde{P}_\ell f := \sum\limits_{\|\bj\|_1 \leq \ell}\sum\limits_{\bk \in D_{\bj}}\langle f,\psi_{\bj,\bk}\rangle\psi_{\bj,\bk}\,,\quad \ell\in \N\,,
$$
which is known from {\it hyperbolic wavelet approximation}, see \cite{DeVKoTe98,SU09}. The following worst-case error bound is well-known and follows directly from \eqref{norm_equ}:
\begin{equation*}
	\sup\limits_{\|f\|_{\Hsm(\R^d)} \leq 1} \|f-\tilde{P}_\ell f\|_{L_2([0,1]^d)} \asymp 2^{-s\ell}\,.
\end{equation*}
We now consider a special case of the matrix $\bL_m$ from \eqref{f0}, namely 
\begin{equation}\label{f0_wav}
	\btL_\ell := \left(\begin{array}{c}
			(\widetilde{\psi}_{\bj,\bk}(\bx^1))^\top_{\|\bj\|_1\leq \ell,\bk \in D_{\bj}}\\
			\vdots \\
			(\widetilde{\psi}_{\bj,\bk}(\bx^n))^\top_{\|\bj\|_1\leq \ell,\bk \in D_{\bj}}
	\end{array}\right)\,.
\end{equation}
Here, $m = m(\ell) \asymp 2^\ell \ell^{d-1}$ and the functions $\widetilde{\psi}_{\bj,\bk} = \sqrt{|\Omega|}\psi_{\bj,\bk}$ enumerate the properly re-normalized wavelets $\psi_{\bj,\bk}, \|\bj\|_1 \leq \ell, \bk \in D_{\bj}$, which is now an orthonormal system in the space $L_2(\Omega,\varrho_\Omega)$ with the probability measure $\varrho_\Omega = \frac{\mathrm{d}\bx}{|\Omega|}$. The $n$ sampling nodes in $\bX$ i.i.d.\ at random according to $\varrho_\Omega$. Note that, due to the construction, we have that $|\Omega|$ is bounded by a constant which depends on the chosen wavelet system. This, on the other hand,   depends on the assumed mixed regularity properties of the function $f$, i.e., the mixed smoothness $s>0$. The larger $s$ is chosen, the larger the support of a properly chosen orthonormal wavelet system has to be. 
We propose Algorithm~\ref{algo2} for computing the wavelet coefficients of an approximation $S_\bX^\ell f$ to $f$. Note, that there is a little abuse of notation since we use the wavelet level $\ell$ as upper index (in contrast to $m$, the dimension of the subspace used earlier). 

\begin{algorithm}[tb]
\caption{Hyperbolic wavelet regression.}\label{algo2}
  \begin{tabular}{p{1.2cm}p{5.0cm}p{8.1cm}}
    Input: &$\ell \in \N$,\\ 
     &$n \colon \tilde{N}(\ell) \asymp 2^\ell \ell^{d-1} \lesssim \frac{n}{\log(n)}$,\\
	 & $\bX = (\bx^1,...,\bx^n)\in D^n$ \hfill & set of distinct sampling nodes, \\
      & $\mathbf{f} = (f(\bx^1),...,f(\bx^n))^\top$ \hfill & samples of $f$ evaluated at the nodes from $\bX$, \\
      & & such that the matrix $\btL_\ell := \btL_\ell(\bX)$ from~\eqref{f0_wav} has full (column) rank.
  \end{tabular}
  \begin{algorithmic}
  \STATE
  Solve the over-determined linear system 
  $$
	\btL_\ell \, (c_{\bj,\bk})_{\bj,\bk} = \mathbf{f}\,
  $$
  via least squares (e.g.\ directly or via the LSQR algorithm \cite{PaSa82}), i.e., compute
  $$
	 (c_{\bj,\bk})_{\bj,\bk} := (\btL_\ell^{\ast}\,\btL_\ell)^{-1}\,\btL_\ell^{\ast}\, \mathbf{f}.
  $$
  \end{algorithmic}
   Output:  $\mathbf{c} = (c_{\bj,\bk})_{\bj,\bk}\in \C^{m(\ell)}$ coefficients of the approximant $S^\ell_{\bX}f:=\sum\limits_{\|\bj\|_1 \leq \ell}\sum\limits_{\bk \in D_{\bj}} c_{\bj,\bk} \, \widetilde{\psi}_{\bj,\bk}.$
\end{algorithm}

\begin{theorem}\label{HWRT} Let $0<\delta<1$. Let further $s>1/2$ and $(\psi_{\bj,\bk})_{\bj,\bk}$ be a hyperbolic and compactly supported orthonormal wavelet system such that \eqref{norm_equ} holds true. Then the algorithm $S^\ell_{\bX}$ indicated in Algorithm~\ref{algo2} recovers any 
$f \in \Hsm(\R^d)$ on $L_2([0,1]^d)$ with high probability from $n = n(\ell)$ random samples which are drawn in advance for the whole class. Precisely, 
\begin{equation}\label{f47}
	\sup\limits_{\|f\|_{\Hsm(\R^d)}\leq 1} \|f-S^{\ell}_{\bX} f\|_{L_2([0,1]^d)} \lesssim C_{\delta,d}2^{-\ell s}
\end{equation}
with probability larger than $1-\delta$. The operator $S^{\ell}_{\bX}$ uses $n(\ell) \asymp 2^{\ell}\ell^d$ many samples such that the bound in \eqref{f47} reads as $\widetilde{C}_{\delta,d}n^{-s}\log(n)^{ds}$ in terms of the number of samples. 
\end{theorem}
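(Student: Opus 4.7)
My strategy is to cast Algorithm \ref{algo2} as a specific instance of Algorithm \ref{algo1} applied to the reproducing kernel Hilbert space $H^s_{\text{mix}}(\R^d)$ embedded into $L_2(\Omega,\varrho_\Omega)$ with $\varrho_\Omega = |\Omega|^{-1}\mathbf{1}_\Omega \,\mathrm{d}\bx$, and then deduce the bound from Corollary \ref{cor10}. By the norm equivalence \eqref{norm_equ}, the eigenvectors of $\Id^\ast \circ \Id$ are (up to harmless constants from $|\Omega|$) the renormalized wavelets $e_{\bj,\bk} = 2^{-\|\bj\|_1 s}\psi_{\bj,\bk}$ with singular values proportional to $2^{-\|\bj\|_1 s}$, and the $L_2(\Omega,\varrho_\Omega)$-orthonormal image $\eta_{\bj,\bk}$ is exactly $\widetilde{\psi}_{\bj,\bk}$. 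Choosing the first $m(\ell)-1$ singular vectors to be the wavelets with $\|\bj\|_1 \leq \ell, \bk\in D_{\bj}$ is a valid enumeration since these precisely exhaust all singular values $\geq 2^{-\ell s}$; the corresponding $L_2(\Omega,\varrho_\Omega)$-projection $P_{m(\ell)-1}f$ coincides with the hyperbolic wavelet truncation $\tilde P_\ell f$.

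Next I would control the two crucial quantities entering the bound. Using that the wavelets are compactly supported, at each $\bx$ only $\mathcal{O}(1)$ translates $\bk$ per level $\bj$ have $\bx$ in the support, and $|\psi_{\bj,\bk}(\bx)|\lesssim 2^{\|\bj\|_1/2}$. This yields
\[
N(m(\ell)) = \sup_{\bx\in\Omega}\sum_{\|\bj\|_1\leq\ell}\sum_{\bk\in D_{\bj}}|\widetilde{\psi}_{\bj,\bk}(\bx)|^2 \lesssim \sum_{\|\bj\|_1\leq\ell} 2^{\|\bj\|_1} \lesssim 2^{\ell}\ell^{d-1},
\]
\[
T(m(\ell)) = \sup_{\bx\in\Omega}\sum_{\|\bj\|_1>\ell}\sum_{\bk\in D_{\bj}} 2^{-2\|\bj\|_1 s}|\psi_{\bj,\bk}(\bx)|^2 \lesssim \sum_{\|\bj\|_1>\ell} 2^{\|\bj\|_1(1-2s)} \lesssim 2^{\ell(1-2s)}\ell^{d-1},
\]
where $s>1/2$ is crucial to make the tail sum converge. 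In particular $K(\bx,\bx)\lesssim \sum_{\bj} 2^{\|\bj\|_1(1-2s)}<\infty$, so the kernel is bounded and the hypotheses of Theorem \ref{thm9} are met.

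Now I plug in. The standing assumption $\tilde N(\ell)\asymp 2^\ell \ell^{d-1}\lesssim n/\log n$ is exactly condition \eqref{f1} (with an appropriate $\delta$) so Proposition \ref{prop1} guarantees $\btL_\ell$ has full column rank with probability $\geq 1-\delta/3$. Applying Corollary \ref{cor10} in $L_2(\Omega,\varrho_\Omega)$ gives, with probability $\geq 1-\delta$,
\[
\sup_{\|f\|_{H^s_{\text{mix}}}\leq 1}\|f-S_\bX^\ell f\|^2_{L_2(\Omega,\varrho_\Omega)} \lesssim_{\delta}\max\Big\{\sigma_{m(\ell)}^2,\;\tfrac{\log(8n)}{n}T(m(\ell))\Big\}.
\]
The first term is $\leq 2^{-2\ell s}$ by construction. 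The second is $\lesssim \frac{\log n}{n}\cdot 2^{\ell(1-2s)}\ell^{d-1} = 2^{-2\ell s}\cdot \frac{\log n\cdot 2^{\ell}\ell^{d-1}}{n}\lesssim 2^{-2\ell s}$ using exactly the oversampling hypothesis. Finally, passing from $L_2(\Omega,\varrho_\Omega)$ to $L_2([0,1]^d)$ costs only the constant factor $\sqrt{|\Omega|}$ because $[0,1]^d\subset\Omega$ and $\varrho_\Omega$ is Lebesgue normalized by $|\Omega|$. Rewriting $n\asymp 2^\ell\ell^d$, equivalently $2^{-\ell s}\asymp n^{-s}\log(n)^{ds}$, delivers \eqref{f47}.

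\textbf{Expected main obstacle.} The non-trivial step is not the machinery of Section \ref{sect_lsqr} (which is ready to apply), but verifying the two combinatorial/analytical estimates on $N(\ell)$ and $T(\ell)$ above --- specifically, ensuring that the constants do not depend on $\ell$ (they depend on $d$, $s$, and the chosen wavelet, which is acceptable). This rests entirely on the compact-support hypothesis giving uniformly bounded overlap at every point, combined with $s>1/2$ so that the geometric sum $\sum_{\bj}2^{\|\bj\|_1(1-2s)}$ converges. A minor bookkeeping point is that wavelets of the same level $\|\bj\|_1 = j$ share the singular value $2^{-js}$, so the ``first $m(\ell)-1$ singular vectors'' in Theorem \ref{thm9} must be understood up to this tie; choosing the truncation $\|\bj\|_1 \leq \ell$ corresponds to one such valid ordering and the result is unaffected.
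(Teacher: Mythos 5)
Your proposal is correct and follows essentially the same route as the paper: the two key estimates $\tilde{N}(\ell)\lesssim 2^{\ell}\ell^{d-1}$ and $\tilde{T}(\ell)\lesssim 2^{\ell(1-2s)}\ell^{d-1}$ via compact support and $s>1/2$, the oversampling condition, and the bookkeeping $n\asymp 2^{\ell}\ell^{d}$ are exactly the paper's ingredients. The only cosmetic difference is that you invoke Corollary \ref{cor10} as a black box, whereas the paper reruns the proof of Theorem \ref{thm9} with the wavelet objects substituted in (since \eqref{norm_equ} is only a norm equivalence and the truncated wavelet system is not the exact eigensystem of the embedding into $L_2(\Omega,\varrho_\Omega)$); this changes nothing of substance, and your remark about ties among equal singular values is the right caveat.
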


\begin{remark}\label{sparseW} {\em (i)} Note that the optimal operator $\tilde{P}_\ell$ uses $n(\ell) \asymp 2^{\ell}\ell^{d-1}$ wavelet coefficients. The gap between sampling recovery ($\Lambda^{\text{std}}$) and general linear approximation ($\Lambda^{\text{all}}$), see e.g.~\cite{DuTeUl19}, \cite{NoWoI, NoWoII, NoWoIII}, is reduced to a $\log$-factor, which is independent of $d$. \\
{\em (ii)} The matrix defined in \eqref{f0_wav} is rather sparse. It has $n \asymp 2^{\ell}\ell^{d}$ rows and $m \asymp 2^{\ell}\ell^ {d-1}$ columns. In every row we have only $\asymp \#\{\|\bj\|_1\leq \ell\} \asymp \ell^d$ many non-zero entries. This gives an additional acceleration for the least squares algorithm since matrix vector multiplications are cheap in this situation. 
\end{remark}

\begin{proof}[Proof of Theorem \ref{HWRT}] We follow the proof of Theorem \ref{thm9}. Let $\bX$ be a set of $n$ randomly drawn nodes from $\Omega$ according to $\varrho_\Omega$. If the number $n$ of samples satisfies \eqref{f1}, that is 
\begin{equation}\label{f12b}
	\tilde{N}(\ell):= \sup_{\bx \in \Omega} \sum\limits_{\|\bj\|_1 \leq \ell}\sum\limits_{\bk \in D_{\bj}} \Big|\widetilde{\psi}_{\bj,\bk}(\bx)\Big|^2 \lesssim \frac{n}{\log n -\log\delta} \,,
\end{equation}
then 
$$
	\Big\|(\btL_\ell^{\ast}\,\btL_\ell)^{-1}\,\btL_\ell^{\ast}\Big\| \leq \sqrt{\frac{2}{n}}
$$
is satisfied with probability larger than $1-\delta$\,. Let $\bX$ be such that this is the case. Then we estimate
\begin{equation*}
  \begin{split}
		&\|f-S_{\bX}^{\ell}f\|_{L_2([0,1]^d)} \leq \|f-\tilde{P}_{\ell}f\|_{L_2([0,1]^d)} + \|\tilde{P}_{\ell}f - S_{\bX}^{\ell}f\|_{L_2([0,1]^d)}\\
		&\quad\lesssim \Bigg\|f-\sum\limits_{\|\bj\|_1\leq \ell}\sum\limits_{\bk \in \Z^d} \langle f,\psi_{\bj,\bk}\rangle \psi_{\bj,\bk}\Bigg\|_{L_2(\R^d)}+\|S_{\bX}^{\ell}(\tilde{P}_{\ell}f - f)\|_{L_2(\Omega, \varrho_{\Omega})}\\
	&\quad\lesssim  2^{-\ell s} + \sqrt{\frac{2}{n}}\cdot \left[\sum\limits_{u=1}^n
	\Bigg(\sum\limits_{\|\bj\|_1> \ell}\sum\limits_{\bk \in \Z^d}\overline{\langle f,\psi_{\bj,\bk}\rangle \psi_{\bj,\bk}(\bx^u)}\Bigg) \Bigg(
	\sum\limits_{\|\bj\|_1> \ell}\sum\limits_{\bk \in \Z^d}\langle f,\psi_{\bj,\bk}\rangle \psi_{\bj,\bk}(\bx^u)\Bigg)\right]^{1/2} \\
	&\quad\lesssim  2^{-\ell s} + \sqrt{\frac{2}{n}}\cdot 
	\left[\sum\limits_{\|\bj'\|_1> \ell}\sum\limits_{\bk' \in \Z^d}\sum\limits_{\|\bj\|_1> \ell}\sum\limits_{\bk \in \Z^d}\right.\\&\hspace{3cm}\left.2^{\|\bj'\|_1s}2^{\|\bj\|_1s}\overline{\langle f,\psi_{\bj',\bk'}\rangle}\langle f,\psi_{\bj,\bk}\rangle
	\sum\limits_{u=1}^n \frac{\overline{\psi_{\bj',\bk'}(\bx^u)}}{2^{\|\bj'\|_1s}}\frac{\psi_{\bj,\bk}(\bx^u)}{2^{\|\bj\|_1s}}\right]^{1/2} \\
	&\quad\lesssim 2^{-\ell s} + \sqrt{\frac{2}{n}} \big\|(2^{\|\bj\|_1s}\langle f,\psi_{\bj,\bk}\rangle)_{\bj,\bk}\big\|_2 \; \|\mathbf{\tilde{\Phi}}_\ell\|\\
	&\quad\lesssim 2^{-\ell s} + \sqrt{\frac{2}{n}}\|\mathbf{\tilde{\Phi}}_\ell\|\,,
   \end{split}
\end{equation*}
where $\mathbf{\tilde{\Phi}}_\ell$ is defined similar as in Proposition \ref{cor8}. This time we put 
$$
	\mathbf{\tilde{\Phi}}_\ell := \left(\begin{array}{c}
			(2^{-\|\bj\|_1s}\psi_{\bj,\bk}(\bx^1))^\top_{\|\bj\|_1> \ell,\bk \in D_{\bj}}\\
			\vdots \\
			(2^{-\|\bj\|_1s}\psi_{\bj,\bk}(\bx^n))^\top_{\|\bj\|_1> \ell,\bk \in D_{\bj}}
	\end{array}\right)\,.
$$
Let us define the quantity $$\tilde{T}(\ell) := \sup\limits_{\bx \in \Omega} \sum\limits_{\|\bj\|_1 >\ell}\sum\limits_{\bk \in \Z^d} 2^{-2\|\bj\|_1s}|\psi_{\bj,\bk}(\bx)|^2\,,$$ which goes along the lines of Proposition \ref{cor8}. Then we get with literally the same arguments
\begin{equation}\label{f48}
  \Ept \|\mathbf{\tilde{\Phi}}_{\ell}\|^2 \lesssim n\left(2^{-2\ell s} + \frac{\log n}{n}\tilde{T}(\ell)+
  2^{-\ell s}\sqrt{\frac{\log n}{n}\tilde{T}(\ell)}\right)\,.
\end{equation}
Let us compute $\tilde{T}(\ell)$. Due to the compact support of the wavelet system, there are for fixed~$\bj$ only $O(1)$ many wavelets $\psi_{\bj,\bk}$ such that $\psi_{\bj,\bk}(\bx)$ is non-zero. For those $O(1)$ wavelets, we have $$
	|\psi_{\bj,\bk}(\bx)|^2 \lesssim 2^{\|\bj\|_1}\,.
$$
Hence, we get
\begin{equation}\label{f49}
	\tilde{T}(\ell) \lesssim \sum\limits_{\|\bj\|_1>\ell} 2^{\|\bj\|_1(1-2s)}\lesssim 2^{\ell(1-2s)}\ell^{d-1}\,.
\end{equation}
By the same reasoning we may estimate $\tilde{N}(\ell)$ in \eqref{f12b}. Clearly $n$ may be chosen such that 
\begin{equation}\label{f50}
	\tilde{N}(\ell) \asymp 2^{\ell}\ell^{d-1} \lesssim \frac{n}{\log(n)}\,.
\end{equation}
Plugging \eqref{f49} and \eqref{f50} into \eqref{f48} we obtain 
$$
	\Ept \|\mathbf{\tilde{\Phi}_\ell}\|^2 \lesssim n2^{-2\ell s}\,.
$$
The same standard arguments as used in Theorem \ref{thm9} and Corollary \ref{cor10} lead to the bound in \eqref{f47}. It remains to estimate the number of samples $n$ depending on $\ell$, see \eqref{f50}. This clearly gives 
$\log(n) \gtrsim \ell$ and hence $n\gtrsim 2^\ell \ell^d$ which concludes the proof. 
\end{proof}

\section{Numerical experiments}\label{sec:num}

\subsection{Recovery of functions from spaces with mixed smoothness}
\label{sec:num:fourier}

In this section, we perform numerical tests for the hyperbolic cross Fourier regression based on random sampling nodes from Section~\ref{mixed1}, i.e., we apply Algorithm~\ref{algo1} to periodic test functions~$f$ from the spaces $H^s_{\textnormal{mix}}(\tor^d)$.
In Figure~\ref{fig:rand_nodes_fourier}, we visualize realizations for such random nodes in the two- and three-dimensional case.

Besides random point sets, different types of deterministic lattices have also been used for numerical integration and function recovery, see for instance \cite{KaPoVo13,KaPoVo14} and \cite{ByKaUlVo16}. This motivates us to consider Frolov lattices~\cite{KaOeUlUl18} and Fibonacci lattices (cf.\ e.g.\ \cite[Sec.~IV.2]{Tem93}) in the context of this paper, see Figure~\ref{fig:fib_frolov_nodes_fourier} for examples of such lattices.

In the following, we use the weight function $$w(\bk):=\prod_{i=1}^d (1+|k_i|^2)^{1/2}.$$ Note that for computational reasons we avoid the $2\pi$ in this weight~$w$. By the reasoning after~\eqref{eq:sigma_upper_bound_kuehn}, the weights without $2\pi$ lead to a slightly slower decay of the respective singular numbers.

\begin{figure}[htb]
\centering
	\subfloat[$d=2$, $n=100$]{
\begin{tikzpicture}
\begin{axis}[
plot box ratio = 1 1,
xmin=0,xmax=1,ymin=0,ymax=1,
font=\footnotesize,
height=0.38\textwidth,
width=0.38\textwidth,
]
\addplot[only marks,black,mark=*,mark size=1pt,mark options={solid}] coordinates {
 (0.815,0.162) (0.906,0.794) (0.127,0.311) (0.913,0.529) (0.632,0.166) (0.098,0.602) (0.278,0.263) (0.547,0.654) (0.958,0.689) (0.965,0.748) (0.158,0.451) (0.971,0.084) (0.957,0.229) (0.485,0.913) (0.800,0.152) (0.142,0.826) (0.422,0.538) (0.916,0.996) (0.792,0.078) (0.959,0.443) (0.656,0.107) (0.036,0.962) (0.849,0.005) (0.934,0.775) (0.679,0.817) (0.758,0.869) (0.743,0.084) (0.392,0.400) (0.655,0.260) (0.171,0.800) (0.706,0.431) (0.032,0.911) (0.277,0.182) (0.046,0.264) (0.097,0.146) (0.823,0.136) (0.695,0.869) (0.317,0.580) (0.950,0.550) (0.034,0.145) (0.439,0.853) (0.382,0.622) (0.766,0.351) (0.795,0.513) (0.187,0.402) (0.490,0.076) (0.446,0.240) (0.646,0.123) (0.709,0.184) (0.755,0.240) (0.276,0.417) (0.680,0.050) (0.655,0.903) (0.163,0.945) (0.119,0.491) (0.498,0.489) (0.960,0.338) (0.340,0.900) (0.585,0.369) (0.224,0.111) (0.751,0.780) (0.255,0.390) (0.506,0.242) (0.699,0.404) (0.891,0.096) (0.959,0.132) (0.547,0.942) (0.139,0.956) (0.149,0.575) (0.258,0.060) (0.841,0.235) (0.254,0.353) (0.814,0.821) (0.244,0.015) (0.929,0.043) (0.350,0.169) (0.197,0.649) (0.251,0.732) (0.616,0.648) (0.473,0.451) (0.352,0.547) (0.831,0.296) (0.585,0.745) (0.550,0.189) (0.917,0.687) (0.286,0.184) (0.757,0.368) (0.754,0.626) (0.380,0.780) (0.568,0.081) (0.076,0.929) (0.054,0.776) (0.531,0.487) (0.779,0.436) (0.934,0.447) (0.130,0.306) (0.569,0.509) (0.469,0.511) (0.012,0.818) (0.337,0.795)
};
\end{axis}
\end{tikzpicture}
}
\hspace{5em}
\subfloat[$d=3$, $n=316$]{
\begin{tikzpicture}
\begin{axis}[axis background/.style={fill=white},
every axis/.append style={font=\footnotesize},
width=0.40\textwidth,
height=0.40\textwidth,
enlargelimits=false,
axis lines = none,
enlargelimits=false,
clip=false,
view={42}{25},
grid=major,
plot box ratio = 1 1 1,
clip mode=individual,
tickwidth=0pt,
z buffer=sort,
xmin=0,xmax=1,
ymin=0,ymax=1,
zmin=0, zmax=1,
]
\addplot3[black!30,very thick,-] coordinates {(0,1,1) (0,1,0)};
\addplot3[black!30,very thick,-] coordinates {(0,1,0) (0,0,0)};
\addplot3[black!30,very thick,-] coordinates {(0,1,0) (1,1,0)};
\addplot3[black,ultra thick,-] coordinates {(0,0,1) (0,0,0)};
\addplot3[black,ultra thick,-] coordinates {(0,0,0) (1,0,0)};
\addplot3+[only marks, mark size=1pt, mark=ball, solid, ball color=black!75, mark options={black!75, draw=black}] coordinates{
 (0.249,0.227,0.729) (0.386,0.049,0.938) (0.431,0.169,0.517) (0.831,0.258,0.903) (0.825,0.198,0.218) (0.453,0.606,0.873) (0.381,0.824,0.083) (0.926,0.811,0.465) (0.741,0.802,0.022) (0.738,0.708,0.808) (0.947,0.859,0.179) (0.510,0.781,0.165) (0.792,0.204,0.182) (0.452,0.993,0.691) (0.849,0.094,0.214) (0.390,0.651,0.298) (0.738,0.215,0.768) (0.976,0.244,0.501) (0.523,0.340,0.909) (0.430,0.198,0.058) (0.207,0.507,0.437) (0.323,0.951,0.572) (0.111,0.395,0.565) (0.375,0.584,0.824) (0.330,0.607,0.126) (0.342,0.715,0.300) (0.817,0.402,0.002) (0.532,0.859,0.951) (0.521,0.920,0.766) (0.774,0.751,0.751) (0.120,0.286,0.139) (0.625,0.797,0.349) (0.347,0.143,0.151) (0.335,0.505,0.497) (0.575,0.611,0.809) (0.864,0.704,0.633) (0.199,0.383,0.688) (0.672,0.729,0.640) (0.902,0.887,0.729) (0.199,0.056,0.860) (0.298,0.138,0.627) (0.497,0.863,0.181) (0.890,0.422,0.573) (0.501,0.411,0.164) (0.277,0.959,0.906) (0.534,0.750,0.077) (0.574,0.981,0.339) (0.413,0.234,0.581) (0.015,0.096,0.475) (0.703,0.385,0.805) (0.507,0.500,0.531) (0.381,0.570,0.227) (0.065,0.977,0.709) (0.359,0.493,0.149) (0.234,0.401,0.658) (0.204,0.995,0.634) (0.814,0.261,0.229) (0.393,0.665,0.182) (0.054,0.964,0.166) (0.375,0.671,0.150) (0.775,0.299,0.203) (0.165,0.531,0.955) (0.912,0.001,0.016) (0.319,0.884,0.958) (0.330,0.404,0.026) (0.204,0.301,0.971) (0.767,0.951,0.298) (0.070,0.461,0.525) (0.950,0.288,0.862) (0.158,0.085,0.896) (0.286,0.582,0.189) (0.687,0.153,0.661) (0.141,0.073,0.941) (0.512,0.581,0.976) (0.721,0.287,0.108) (0.929,0.362,0.179) (0.732,0.725,0.747) (0.750,0.858,0.049) (0.407,0.348,0.071) (0.239,0.962,0.489) (0.521,0.954,0.850) (0.219,0.206,0.997) (0.842,0.768,0.004) (0.663,0.616,0.543) (0.816,0.919,0.861) (0.794,0.603,0.909) (0.469,0.702,0.845) (0.310,0.744,0.879) (0.688,0.385,0.746) (0.987,0.252,0.117) (0.770,0.037,0.509) (0.830,0.472,0.169) (0.706,0.645,0.831) (0.595,0.279,0.928) (0.753,0.518,0.169) (0.497,0.246,0.884) (0.865,0.298,0.388) (0.068,0.650,0.383) (0.969,0.891,0.271) (0.099,0.861,0.868) (0.547,0.210,0.742) (0.403,0.399,0.448) (0.107,0.888,0.710) (0.724,0.257,0.944) (0.614,0.967,0.174) (0.783,0.619,0.245) (0.567,0.165,0.641) (0.811,0.826,0.809) (0.577,0.656,0.853) (0.944,0.546,0.398) (0.871,0.251,0.115) (0.508,0.040,0.080) (0.789,0.233,0.360) (0.473,0.361,0.829) (0.829,0.633,0.215) (0.322,0.986,0.791) (0.976,0.207,0.655) (0.278,0.757,0.026) (0.073,0.886,0.786) (0.751,0.472,0.923) (0.831,0.159,0.492) (0.922,0.811,0.834) (0.327,0.477,0.131) (0.804,0.116,0.760) (0.538,0.876,0.926) (0.463,0.635,0.833) (0.821,0.097,0.259) (0.952,0.908,0.213) (0.076,0.035,0.522) (0.709,0.040,0.397) (0.235,0.989,0.479) (0.399,0.686,0.994) (0.268,0.377,0.604) (0.833,0.504,0.945) (0.995,0.763,0.490) (0.650,0.049,0.438) (0.704,0.726,0.773) (0.932,0.701,0.744) (0.688,0.459,0.443) (0.568,0.582,0.053) (0.381,0.339,0.088) (0.635,0.171,0.798) (0.363,0.399,0.656) (0.408,0.920,0.032) (0.369,0.226,0.557) (0.468,0.361,0.720) (0.503,0.325,0.110) (0.911,0.084,0.217) (0.206,0.513,0.811) (0.339,0.833,0.139) (0.574,0.905,0.882) (0.487,0.724,0.924) (0.262,0.383,0.013) (0.580,0.298,0.377) (0.878,0.692,0.168) (0.061,0.880,0.540) (0.441,0.925,0.102) (0.084,0.081,0.039) (0.563,0.483,0.933) (0.539,0.128,0.972) (0.768,0.253,0.361) (0.233,0.884,0.644) (0.587,0.196,0.068) (0.459,0.121,0.208) (0.861,0.544,0.040) (0.661,0.315,0.469) (0.354,0.382,0.150) (0.347,0.792,0.991) (0.254,0.839,0.427) (0.953,0.680,0.955) (0.298,0.417,0.724) (0.158,0.643,0.581) (0.361,0.214,0.540) (0.742,0.617,0.705) (0.706,0.675,0.005) (0.701,0.601,0.783) (0.006,0.346,0.927) (0.374,0.364,0.008) (0.901,0.171,0.825) (0.318,0.795,0.767) (0.597,0.493,0.997) (0.298,0.355,0.228) (0.125,0.775,0.920) (0.388,0.237,0.642) (0.818,0.845,0.105) (0.981,0.817,0.268) (0.862,0.846,0.764) (0.084,0.370,0.806) (0.338,0.383,0.104) (0.236,0.861,0.470) (0.318,0.464,0.219) (0.984,0.571,0.923) (0.548,0.695,0.320) (0.749,0.961,0.858) (0.842,0.546,0.260) (0.167,0.637,0.878) (0.903,0.571,0.188) (0.105,0.927,0.759) (0.745,0.864,0.032) (0.729,0.170,0.642) (0.717,0.179,0.567) (0.133,0.244,0.376) (0.446,0.752,0.213) (0.509,0.199,0.792) (0.530,0.983,0.145) (0.860,0.710,0.489) (0.678,0.175,0.013) (0.806,0.858,0.187) (0.531,0.909,0.485) (0.956,0.962,0.838) (0.067,0.571,0.141) (0.542,0.563,0.732) (0.282,0.177,0.691) (0.481,0.514,0.034) (0.685,0.548,0.489) (0.208,0.165,0.971) (0.608,0.494,0.112) (0.326,0.535,0.743) (0.881,0.199,0.639) (0.133,0.623,0.594) (0.102,0.026,0.499) (0.959,0.319,0.568) (0.153,0.533,0.427) (0.153,0.327,0.076) (0.156,0.602,0.291) (0.090,0.362,0.561) (0.454,0.135,0.633) (0.669,0.914,0.931) (0.831,0.641,0.978) (0.790,0.659,0.094) (0.713,0.675,0.662) (0.473,0.745,0.603) (0.709,0.842,0.474) (0.958,0.517,0.356) (0.506,0.152,0.476) (0.305,0.381,0.671) (0.790,0.821,0.960) (0.236,0.171,0.089) (0.234,0.330,0.798) (0.465,0.966,0.591) (0.619,0.806,0.912) (0.615,0.222,0.101) (0.123,1.000,0.293) (0.124,0.064,0.052) (0.284,0.425,0.504) (0.736,0.404,0.768) (0.411,0.400,0.283) (0.829,0.112,0.225) (0.935,0.424,0.331) (0.399,0.614,0.453) (0.052,0.988,0.737) (0.571,0.220,0.510) (0.748,0.354,0.383) (0.320,0.266,0.905) (0.493,0.291,0.965) (0.222,0.188,0.628) (0.939,0.023,0.132) (0.482,0.449,0.618) (0.540,0.244,0.383) (0.221,0.869,0.991) (0.096,0.529,0.287) (0.060,0.914,0.706) (0.820,0.974,0.535) (0.771,0.585,0.193) (0.196,0.119,0.689) (0.895,0.927,0.050) (0.684,0.594,0.184) (0.657,0.884,0.046) (0.990,0.424,0.885) (0.034,0.607,0.840) (0.424,0.071,0.118) (0.490,0.925,0.410) (0.584,0.642,0.120) (0.083,0.104,0.572) (0.660,0.700,0.949) (0.052,0.396,0.256) (0.557,0.085,0.990) (0.712,0.214,0.350) (0.488,0.249,0.209) (0.618,0.227,0.666) (0.214,0.703,0.973) (0.646,0.754,0.623) (0.381,0.547,0.064) (0.104,0.553,0.374) (0.378,0.631,0.166) (0.263,0.985,0.231) (0.241,0.634,0.052) (0.623,0.600,0.902) (0.523,0.909,0.793) (0.413,0.571,0.373) (0.218,0.335,0.832) (0.859,0.957,0.754) (0.861,0.440,0.622) (0.284,0.602,0.394) (0.615,0.720,0.359) (0.779,0.679,0.089) (0.955,0.213,0.342) (0.920,0.082,0.549) (0.385,0.274,0.461) (0.163,0.868,0.645) (0.797,0.559,0.514) (0.114,0.465,0.814) (0.159,0.430,0.097) (0.356,0.774,0.464) (0.848,0.654,0.590) (0.583,0.658,0.187) (0.586,0.161,0.611) (0.926,0.432,0.052) (0.575,0.505,0.576) (0.010,0.375,0.842) (0.809,0.480,0.500) (0.609,0.342,0.439) (0.480,0.777,0.149) (0.268,0.384,0.028) (0.258,0.712,0.757) (0.481,0.481,0.796)
};
\addplot3[black,ultra thick,-] coordinates {(0,0,1) (0,1,1)};
\addplot3[black,ultra thick,-] coordinates {(0,1,1) (1,1,1)};
\addplot3[black,ultra thick,-] coordinates {(1,1,1) (1,0,1)};
\addplot3[black,ultra thick,-] coordinates {(1,0,1) (0,0,1)};
\addplot3[black,ultra thick,-] coordinates {(1,0,1) (1,0,0)};
\addplot3[black,ultra thick,-] coordinates {(1,0,0) (1,1,0)};
\addplot3[black,ultra thick,-] coordinates {(1,1,0) (1,1,1)};
\end{axis}
\end{tikzpicture}
}
\caption{Realizations of random nodes for hyperbolic Fourier regression.}\label{fig:rand_nodes_fourier}
\end{figure}

\begin{figure}[htb]
\centering
	\subfloat[Fibonacci lattice $n=144$]{
		\begin{tikzpicture}
		\begin{axis}[
		plot box ratio = 1 1,
		xmin=0,xmax=1,ymin=0,ymax=1,
		font=\footnotesize,
		height=0.38\textwidth,
		width=0.38\textwidth,
		]
		\addplot[only marks,black,mark=*,mark size=1pt,mark options={solid}] coordinates {
 (0.000,0.000) (0.007,0.618) (0.014,0.236) (0.021,0.854) (0.028,0.472) (0.035,0.090) (0.042,0.708) (0.049,0.326) (0.056,0.944) (0.062,0.562) (0.069,0.181) (0.076,0.799) (0.083,0.417) (0.090,0.035) (0.097,0.653) (0.104,0.271) (0.111,0.889) (0.118,0.507) (0.125,0.125) (0.132,0.743) (0.139,0.361) (0.146,0.979) (0.153,0.597) (0.160,0.215) (0.167,0.833) (0.174,0.451) (0.181,0.069) (0.188,0.688) (0.194,0.306) (0.201,0.924) (0.208,0.542) (0.215,0.160) (0.222,0.778) (0.229,0.396) (0.236,0.014) (0.243,0.632) (0.250,0.250) (0.257,0.868) (0.264,0.486) (0.271,0.104) (0.278,0.722) (0.285,0.340) (0.292,0.958) (0.299,0.576) (0.306,0.194) (0.312,0.812) (0.319,0.431) (0.326,0.049) (0.333,0.667) (0.340,0.285) (0.347,0.903) (0.354,0.521) (0.361,0.139) (0.368,0.757) (0.375,0.375) (0.382,0.993) (0.389,0.611) (0.396,0.229) (0.403,0.847) (0.410,0.465) (0.417,0.083) (0.424,0.701) (0.431,0.319) (0.438,0.938) (0.444,0.556) (0.451,0.174) (0.458,0.792) (0.465,0.410) (0.472,0.028) (0.479,0.646) (0.486,0.264) (0.493,0.882) (0.500,0.500) (0.507,0.118) (0.514,0.736) (0.521,0.354) (0.528,0.972) (0.535,0.590) (0.542,0.208) (0.549,0.826) (0.556,0.444) (0.562,0.062) (0.569,0.681) (0.576,0.299) (0.583,0.917) (0.590,0.535) (0.597,0.153) (0.604,0.771) (0.611,0.389) (0.618,0.007) (0.625,0.625) (0.632,0.243) (0.639,0.861) (0.646,0.479) (0.653,0.097) (0.660,0.715) (0.667,0.333) (0.674,0.951) (0.681,0.569) (0.688,0.188) (0.694,0.806) (0.701,0.424) (0.708,0.042) (0.715,0.660) (0.722,0.278) (0.729,0.896) (0.736,0.514) (0.743,0.132) (0.750,0.750) (0.757,0.368) (0.764,0.986) (0.771,0.604) (0.778,0.222) (0.785,0.840) (0.792,0.458) (0.799,0.076) (0.806,0.694) (0.812,0.312) (0.819,0.931) (0.826,0.549) (0.833,0.167) (0.840,0.785) (0.847,0.403) (0.854,0.021) (0.861,0.639) (0.868,0.257) (0.875,0.875) (0.882,0.493) (0.889,0.111) (0.896,0.729) (0.903,0.347) (0.910,0.965) (0.917,0.583) (0.924,0.201) (0.931,0.819) (0.938,0.438) (0.944,0.056) (0.951,0.674) (0.958,0.292) (0.965,0.910) (0.972,0.528) (0.979,0.146) (0.986,0.764) (0.993,0.382)
		};
		\end{axis}
		\end{tikzpicture}
	}
\hspace{5em}
	\subfloat[Frolov lattice $n=127$]{
	\begin{tikzpicture}
	\begin{axis}[
	plot box ratio = 1 1,
	xmin=0,xmax=1,ymin=0,ymax=1,
	font=\footnotesize,
	height=0.38\textwidth,
	width=0.38\textwidth,
	]
	\addplot[only marks,black,mark=*,mark size=1pt,mark options={solid}] coordinates {
 (0.500,0.500) (0.559,0.559) (0.618,0.618) (0.677,0.677) (0.736,0.736) (0.796,0.796) (0.855,0.855) (0.914,0.914) (0.973,0.973) (0.441,0.441) (0.382,0.382) (0.323,0.323) (0.264,0.264) (0.204,0.204) (0.145,0.145) (0.086,0.086) (0.027,0.027) (0.537,0.404) (0.596,0.463) (0.655,0.523) (0.714,0.582) (0.773,0.641) (0.832,0.700) (0.891,0.759) (0.950,0.818) (0.477,0.345) (0.418,0.286) (0.359,0.227) (0.300,0.168) (0.241,0.109) (0.182,0.050) (0.632,0.368) (0.691,0.427) (0.750,0.486) (0.809,0.545) (0.869,0.604) (0.928,0.663) (0.987,0.722) (0.573,0.309) (0.514,0.250) (0.455,0.191) (0.396,0.131) (0.337,0.072) (0.278,0.013) (0.669,0.272) (0.728,0.331) (0.787,0.390) (0.846,0.450) (0.905,0.509) (0.964,0.568) (0.610,0.213) (0.550,0.154) (0.491,0.095) (0.432,0.036) (0.764,0.236) (0.823,0.295) (0.883,0.354) (0.942,0.413) (0.705,0.177) (0.646,0.117) (0.587,0.058) (0.801,0.140) (0.860,0.199) (0.919,0.258) (0.978,0.317) (0.742,0.081) (0.683,0.022) (0.897,0.103) (0.956,0.163) (0.837,0.044) (0.933,0.008) (0.992,0.067) (0.463,0.596) (0.523,0.655) (0.582,0.714) (0.641,0.773) (0.700,0.832) (0.759,0.891) (0.818,0.950) (0.404,0.537) (0.345,0.477) (0.286,0.418) (0.227,0.359) (0.168,0.300) (0.109,0.241) (0.050,0.182) (0.368,0.632) (0.427,0.691) (0.486,0.750) (0.545,0.809) (0.604,0.869) (0.663,0.928) (0.722,0.987) (0.309,0.573) (0.250,0.514) (0.191,0.455) (0.131,0.396) (0.072,0.337) (0.013,0.278) (0.331,0.728) (0.390,0.787) (0.450,0.846) (0.509,0.905) (0.568,0.964) (0.272,0.669) (0.213,0.610) (0.154,0.550) (0.095,0.491) (0.036,0.432) (0.236,0.764) (0.295,0.823) (0.354,0.883) (0.413,0.942) (0.177,0.705) (0.117,0.646) (0.058,0.587) (0.199,0.860) (0.258,0.919) (0.317,0.978) (0.140,0.801) (0.081,0.742) (0.022,0.683) (0.103,0.897) (0.163,0.956) (0.044,0.837) (0.067,0.992) (0.008,0.933)
	};
	\end{axis}
	\end{tikzpicture}
}
\caption{Examples of Fibonacci and Frolov lattices in $d=2$ spatial dimensions.}\label{fig:fib_frolov_nodes_fourier}
\end{figure}

For a given number $n$ of samples, we use the $m=\lfloor n/(4\log n)\rfloor$ frequencies $\bk\in\Z^d$ where $w(\bk)$ is smallest, i.e., we define $I_m:=\{\bk_1,\ldots,\bk_m\}\subset\Z^d$, $|I_m|=m$, assuming an arrangement of $\bk_j$ fulfilling $w(\bk_1)\le w(\bk_2)\le\ldots$.  Here ties are broken in numerical order starting with the first component $k_1$ of $\bk$ until the last one $k_d$. Corresponding to our theoretical results, the goal is to compute a least squares approximation $S_\bX^mf$ of the projection $P_{m-1}f$ of the function $f$ to the $\operatorname{span}\{\exp(2\pi\mathrm{i}\,\bk\cdot \bx) \colon \bk \in I_m\}$ using the $n$ sampling values at the nodes in $\bX$.

\paragraph{Comments on the arithmetic cost of Algorithm \ref{algo1}.}
Building the index set $I_{m-1}$, i.e., enumerating the basis functions $\eta_1,\ldots,\eta_{m-1}$, requires
$$\leq 4 \, C_1 \, d \, m^2 \log m \leq C_1 \, d \, n \, m$$
arithmetic operations, and setting up the matrix $\bL_m$ requires
$$\leq C_2 \, d \, n \, m$$ arithmetic operations,
where $C_1,C_2>0$ are absolute constants.
Afterwards, running Algorithm~\ref{algo1} requires
$$\leq C_3\, R \, n \, m \leq C_3\, R \, \frac{n^2}{4\log n}$$
arithmetic operations, where $C_3>0$ is an absolute constant and $R\in\N$ is the number of LSQR iterations. If one chooses $m$ as in Theorem~\ref{thm9}, cf.~\eqref{f1} and~\eqref{f1c}, the condition number of the matrix $\bL_m$ is $\leq \sqrt{3}$ with high probability and one obtains $R\leq 17$ for a LSQR accuracy of $\approx 10^{-8}$. Please note that we choose $m=\lfloor n/(4\log n)\rfloor$ in our experiments, which is slightly larger than the theory \eqref{f1c} requires.

\begin{remark}
Let us compare the hyperbolic cross Fourier regression from Section~\ref{mixed1}, which uses random samples, with the single rank-1 lattice sampling approach from~\cite{KaPoVo13, ByKaUlVo16}, which uses highly structured deterministic sampling nodes.
Up to logarithmic factors, both approaches have comparable error estimates w.r.t.\ the number $m$ of basis functions and comparable arithmetic complexities.
Single rank-1 lattice sampling has slightly worse recovery error estimates w.r.t.\ $m$ than Algorithm~\ref{algo1}, cf.\ Theorem~\ref{thm:asymp_hsmix}. On the other hand, the arithmetic complexity for single rank-1 lattice sampling is slightly better. Moreover, the error estimates when using rank-1 lattices are guaranteed upper bounds, whereas the worst case upper bounds in Section~\ref{mixed1} hold with high probability. However, for fixed $m$, the used number of samples for single rank-1 lattice sampling is distinctly higher, i.e., almost quadratic compared to the approach from this paper.
This results in error estimates w.r.t.\ the number~$n$ of used sampling values that are distinctly worse for single rank-1 lattices.
\end{remark}
Subsequently, we consider three different test functions $f\colon \tor^d \rightarrow \R$,
where the Fourier coefficients $\hat{f}_{\bk}:=\int_{\tor^d}f(\bx) \,\exp(-2\pi\mathrm{i}\,\bk\cdot\bx) \,\mathrm{d}\bx$, $\bk\in\Z^d$, of $f$, decay like $|\hat{f}_{\bk}|\sim \prod_{i=1}^d \big(1+|k_i|^2\big)^{-\alpha/2}$ for $\alpha\in\{5/4,\, 2,\, 6\}$ and, consequently, $f\in H^s_{\textnormal{mix}}(\tor^d)$ with $s=\alpha-1/2-\varepsilon$ for $\varepsilon>0$.

\subsubsection*{Test function $f$ from $H^{3/4-\varepsilon}_{\textnormal{mix}}(\tor^d)$}
We start with the test function
$$
 f\colon \tor^d \rightarrow \R, \quad
 f(\bx) := \left(\frac{3}{2}\right)^{d/2} \prod_{i=1}^d \left(1 - \big|2 (x_i \bmod 1) - 1\big|\right)^{1/4},
$$
where we have for the Fourier coefficients $|\hat{f}_{\bk}|\sim \big(\prod_{i=1}^d (1+|k_i|^2)^{1/2}\big)^{-5/4}$ and, consequently, $f\in H^{3/4-\varepsilon}_{\textnormal{mix}}(\tor^d)$, $\varepsilon>0$.

\begin{figure}[htb]
\subfloat[approximation error]{\label{fig:num:fourier:H0.75:trunc}
		\begin{tikzpicture}[baseline]
		\begin{axis}[
			font=\footnotesize,
			enlarge x limits=true,
			enlarge y limits=true,
			height=0.4\textwidth,
			grid=major,
			width=0.48\textwidth,
            xmode=log,
            ymode=log,
			xlabel={$m=|I|$},
			ylabel={$L_2$ approximation error},
			legend style={legend cell align=left, at={(1.0,1.35)}},
			legend columns = 2,
		]
		\addplot[dashed,red,mark=triangle,mark size=3pt,mark options={solid}] coordinates {
 (100,3.118e-02) (316,1.646e-02) (1000,8.278e-03) (3162,4.064e-03) (10000,1.952e-03) (31623,9.230e-04) (100000,4.311e-04) (316228,1.993e-04) (1000000,9.138e-05) (3162278,4.161e-05)
};
\addlegendentry{$\tilde{a}_m$, $d=2$}
\addplot [forget plot,black,domain=1e3:6e6, samples=100, dotted,ultra thick]{0.6*x^(-0.75)*(ln(x))^(1*0.75)};
		\addplot[dashed,blue,mark=square,mark size=2.2pt,mark options={solid}] coordinates {
 (100,6.842e-02) (316,4.241e-02) (1000,2.422e-02) (3162,1.389e-02) (10000,7.665e-03) (31623,4.063e-03) (100000,2.115e-03) (316228,1.079e-03) (1000000,5.413e-04) (3162278,2.677e-04) (10000000,1.309e-04)
};
\addlegendentry{$\tilde{a}_m$, $d=3$}
\addplot [forget plot,black,domain=1e5:2e7, samples=100, dotted,ultra thick]{0.55*x^(-0.75)*(ln(x))^(2*0.75)};
		\addplot[dashed,magenta,mark=o,mark size=2.5pt,mark options={solid}] coordinates {
 (100,1.139e-01) (316,7.896e-02) (1000,4.602e-02) (3162,3.061e-02) (10000,1.893e-02) (31623,1.097e-02) (100000,6.260e-03) (316228,3.502e-03) (1000000,1.923e-03) (3162278,1.030e-03) (10000000,5.438e-04) (31622777,2.827e-04)
};
\addlegendentry{$\tilde{a}_m$, $d=4$}
\addplot [forget plot,black,domain=1e6:8e7, samples=100, dotted,ultra thick]{0.3*x^(-0.75)*(ln(x))^(3*0.75)};
		\addplot[dashed,cyan,mark=triangle,mark size=3pt,mark options={solid,rotate=180}] coordinates {
 (100,1.419e-01) (316,1.008e-01) (1000,6.991e-02) (3162,5.163e-02) (10000,3.335e-02) (31623,2.170e-02) (100000,1.365e-02) (316228,8.258e-03) (1000000,4.872e-03) (3162278,2.822e-03) (10000000,1.597e-03) (31622777,8.901e-04)
};
\addlegendentry{$\tilde{a}_m$, $d=5$}
\addplot [black,domain=2e6:8e7, samples=100, dotted,ultra thick]{0.12*x^(-0.75)*(ln(x))^(4*0.75)};
\addlegendentry{$m^{-0.75}(\log m)^{(d-1)\cdot 0.75}$}
		\end{axis}
		\end{tikzpicture}
}
\hfill
\subfloat[sampling error]{\label{fig:num:fourier:H0.75:sampl}
		\begin{tikzpicture}[baseline]
		\begin{axis}[
			font=\footnotesize,
			enlarge x limits=true,
			enlarge y limits=true,
			height=0.4\textwidth,
			grid=major,
			width=0.48\textwidth,
            xmode=log,
            ymode=log,
			xlabel={$n$},
			ylabel={$L_2$ error},
			legend style={legend cell align=left, at={(1.0,1.5)}},
			legend columns = 2,
		]
		\addplot[red,mark=triangle,mark size=3pt,mark options={solid}] coordinates {
 (100,1.167e-01) (316,8.917e-02) (1000,5.576e-02) (3162,3.168e-02) (10000,1.819e-02) (31623,9.831e-03) (100000,5.209e-03) (316228,2.668e-03) (1000000,1.340e-03) (3162278,6.636e-04)
};
\addlegendentry{$\tilde{g}_n, d=2$}
		\addplot[dashed,red,no marks,ultra thick] coordinates {
 (100,1.159e-01) (316,8.685e-02) (1000,5.505e-02) (3162,3.119e-02) (10000,1.794e-02) (31623,9.726e-03) (100000,5.126e-03) (316228,2.642e-03) (1000000,1.329e-03) (3162278,6.584e-04)
};
\addlegendentry{$\tilde{a}_{\lfloor n/(4\log n) \rfloor}, d=2$}
\addplot [forget plot,black,domain=1e5:6e6, samples=100, dotted,ultra thick]{1.28*x^(-0.75)*(ln(x))^(2*0.75)};
		\addplot[blue,mark=square,mark size=2.2pt,mark options={solid}] coordinates {
 (100,2.346e-01) (316,1.440e-01) (1000,1.057e-01) (3162,6.969e-02) (10000,4.443e-02) (31623,2.866e-02) (100000,1.704e-02) (316228,1.000e-02) (1000000,5.620e-03) (3162278,3.072e-03)
};
\addlegendentry{$\tilde{g}_n, d=3$}
		\addplot[dashed,blue,no marks,ultra thick] coordinates {
 (100,2.300e-01) (316,1.417e-01) (1000,1.035e-01) (3162,6.843e-02) (10000,4.386e-02) (31623,2.831e-02) (100000,1.686e-02) (316228,9.905e-03) (1000000,5.566e-03) (3162278,3.046e-03)
};
\addlegendentry{$\tilde{a}_{\lfloor n/(4\log n) \rfloor}, d=3$}
\addplot [forget plot,black,domain=4e5:6e6, samples=100, dotted,ultra thick]{0.72*x^(-0.75)*(ln(x))^(3*0.75)};
		\addplot[magenta,mark=o,mark size=2.5pt,mark options={solid}] coordinates {
 (100,3.022e-01) (316,1.744e-01) (1000,1.477e-01) (3162,1.157e-01) (10000,8.021e-02) (31623,5.103e-02) (100000,3.433e-02) (316228,2.309e-02) (1000000,1.436e-02) (3162278,8.661e-03)
};
\addlegendentry{$\tilde{g}_n, d=4$}
		\addplot[dashed,magenta,no marks,ultra thick] coordinates {
 (100,3.013e-01) (316,1.728e-01) (1000,1.459e-01) (3162,1.140e-01) (10000,7.919e-02) (31623,5.044e-02) (100000,3.396e-02) (316228,2.284e-02) (1000000,1.423e-02) (3162278,8.588e-03)
};
\addlegendentry{$\tilde{a}_{\lfloor n/(4\log n) \rfloor}, d=4$}
\addplot [forget plot,black,domain=4e5:6e6, samples=100, dotted,ultra thick]{0.21*x^(-0.7)*(ln(x))^(4*0.7)};
		\addplot[cyan,mark=triangle,mark size=3pt,mark options={solid,rotate=180}] coordinates {
 (100,3.624e-01) (316,2.046e-01) (1000,1.917e-01) (3162,1.441e-01) (10000,1.078e-01) (31623,7.777e-02) (100000,5.699e-02) (316228,3.922e-02) (1000000,2.787e-02) (3162278,1.790e-02)
};
\addlegendentry{$\tilde{g}_n, d=5$}
		\addplot[dashed,cyan,no marks,ultra thick] coordinates {
 (100,3.566e-01) (316,2.011e-01) (1000,1.860e-01) (3162,1.419e-01) (10000,1.061e-01) (31623,7.687e-02) (100000,5.636e-02) (316228,3.882e-02) (1000000,2.762e-02) (3162278,1.775e-02)
};
\addlegendentry{$\tilde{a}_{\lfloor n/(4\log n) \rfloor}, d=5$}
\addlegendimage{empty legend}
\addlegendentry{}
\addplot [black,domain=4e5:6e6, samples=100, dotted,ultra thick]{0.08*x^(-0.75)*(ln(x))^(5*0.75)};
\addlegendentry{$n^{-0.75}(\log n)^{d\cdot 0.75}$}
		\end{axis}
		\end{tikzpicture}
}
\caption{Approximation errors and least squares sampling errors for test function $f\in H^{3/4-\varepsilon}_{\textnormal{mix}}(\tor^d)$.}
\end{figure}

\begin{sloppypar}
In Figure~\ref{fig:num:fourier:H0.75:trunc}, we visualize the relative approximation errors $\tilde{a}_m:=\|f-P_{m-1}f\|_{L_2(\tor^d)}$ for spatial dimensions $d=2,3,4,5$. Due to~\eqref{f27}, these errors should decay like $m^{-0.75+\varepsilon}(\log m)^{(d-1)\cdot (0.75-\varepsilon)}$ for sufficiently large $m$. Correspondingly, we plot $m^{-0.75}(\log m)^{(d-1)\cdot 0.75}$ as black dotted graphs. We observe that the obtained approximation errors nearly decay as the theory suggests.
\end{sloppypar}

Next, we apply Algorithm~\ref{algo1} on the test function $f$ using $n$ randomly selected sampling nodes as sampling scheme. We do not compute the least squares solution directly but use the iterative method LSQR \cite{PaSa82} on the matrix $\bL_m$, $m=\lfloor n/(4\log n) \rfloor$. The obtained sampling errors $\tilde{g}_n:=\|f-S^m_{\bX}f \|_{L_2(\tor^d)}$ are visualized in Figure~\ref{fig:num:fourier:H0.75:sampl} as well as the graphs $\sim n^{-0.75}(\log n)^{d\cdot 0.75}$ as dotted lines which correspond to the theoretical upper bounds $n^{-0.75+\varepsilon}(\log n)^{d\cdot (0.75-\varepsilon)}$, cf.~\eqref{equ:fourier:wc_sampl_error}. We set the tolerance parameter of LSQR to $5\cdot 10^{-8}$ and the maximum number of iterations to 100. For $d=2$ and $d=3$, the errors nearly decay like these bounds. For $d=4$ and $d=5$, the errors seem to decay slightly slower than the bounds. In order to investigate this further, we also plot the corresponding approximation errors $\tilde{a}_m$ with $m=\lfloor n/(4\log n) \rfloor$ as thick dashed lines. We observe that these approximation errors~$\tilde{a}_{m}$, which are the best possible errors that can be achieved in this setting, almost coincide with the sampling errors. This means that we might still observe preasymptotic behavior.

\begin{figure}[htb]
\subfloat[$d=2$]{\label{fig:num:fourier:H0.75:alias_d2}
		\begin{tikzpicture}[baseline]
		\begin{axis}[
			font=\footnotesize,
			enlarge x limits=true,
			enlarge y limits=true,
			height=0.4\textwidth,
			grid=major,
			width=0.47\textwidth,
            xmode=log,
            ymode=log,
			xlabel={$n$},
			ylabel={$L_2$ error},
			legend style={legend cell align=left, at={(1.0,1.4)}},
			legend columns = 1,
		]
		\addplot[red,mark=triangle,mark size=3pt,mark options={solid}] coordinates {
 (100,1.352e-02) (316,2.020e-02) (1000,8.891e-03) (3162,5.531e-03) (10000,2.978e-03) (31623,1.433e-03) (100000,9.252e-04) (316228,3.717e-04) (1000000,1.734e-04) (3162278,8.301e-05)
};
\addlegendentry{aliasing error, random nodes}
		\addplot[red,mark=o,mark size=2.5pt,mark options={solid}] coordinates {
 (127,1.111e-02) (257,5.454e-03) (511,4.893e-03) (1023,2.387e-03) (2049,2.607e-03) (4093,1.731e-03) (8195,1.095e-03) (16387,8.192e-04) (32771,4.486e-04) (65533,4.068e-04) (131075,2.741e-04) (262147,2.141e-04) (524291,1.127e-04) (1048575,9.136e-05) (2097153,6.678e-05)
};
\addlegendentry{aliasing error, Frolov lattice}
		\addplot[red,mark=square,mark size=2.2pt,mark options={solid}] coordinates {
 (144,1.447e-03) (233,1.019e-03) (377,7.798e-04) (610,5.517e-04) (987,5.604e-04) (1597,4.445e-04) (2584,4.455e-04) (4181,3.375e-04) (6765,3.098e-04) (10946,3.190e-04) (17711,2.333e-04) (28657,2.011e-04) (46368,1.973e-04) (75025,1.585e-04) (121393,1.292e-04) (196418,1.195e-04) (317811,1.140e-04) (514229,1.133e-04) (832040,9.042e-05) (1346269,6.744e-05) (2178309,5.671e-05)
};
\addlegendentry{aliasing error, Fibonacci lattice}
		\addplot[dashed,red,no marks,ultra thick] coordinates {
 (100,1.159e-01) (316,8.685e-02) (1000,5.505e-02) (3162,3.119e-02) (10000,1.794e-02) (31623,9.726e-03) (100000,5.126e-03) (316228,2.642e-03) (1000000,1.329e-03) (3162278,6.584e-04)
};
\addlegendentry{$\tilde{a}_{\lfloor n/(4\log n) \rfloor}, d=2$}
		\end{axis}
		\end{tikzpicture}
}
\hfill
\subfloat[$d=3$]{\label{fig:num:fourier:H0.75:alias_d3}
		\begin{tikzpicture}[baseline]
		\begin{axis}[
			font=\footnotesize,
			enlarge x limits=true,
			enlarge y limits=true,
			height=0.4\textwidth,
			grid=major,
			width=0.47\textwidth,
            xmode=log,
            ymode=log,
			xlabel={$n$},
			ylabel={$L_2$ error},
			legend style={legend cell align=left, at={(1.0,1.31)}},
			legend columns = 1,
		]
		\addplot[blue,mark=triangle,mark size=3pt,mark options={solid}] coordinates {
 (100,4.646e-02) (316,2.566e-02) (1000,2.116e-02) (3162,1.321e-02) (10000,7.105e-03) (31623,4.471e-03) (100000,2.479e-03) (316228,1.411e-03) (1000000,7.726e-04) (3162278,4.044e-04)
};
\addlegendentry{aliasing error, random nodes}
		\addplot[blue,mark=o,mark size=2.5pt,mark options={solid}] coordinates {
 (1021,6.501e-03) (2041,4.719e-03) (4093,4.052e-03) (8193,2.120e-03) (16387,1.878e-03) (32773,1.192e-03) (65537,1.053e-03) (131071,6.320e-04) (262149,5.639e-04) (524291,3.907e-04) (1048581,2.979e-04)
};
\addlegendentry{aliasing error, Frolov lattice}
		\addplot[dashed,blue,no marks,ultra thick] coordinates {
 (100,2.300e-01) (316,1.417e-01) (1000,1.035e-01) (3162,6.843e-02) (10000,4.386e-02) (31623,2.831e-02) (100000,1.686e-02) (316228,9.905e-03) (1000000,5.566e-03) (3162278,3.046e-03)
};
\addlegendentry{$\tilde{a}_{\lfloor n/(4\log n) \rfloor}, d=3$}
		\end{axis}
		\end{tikzpicture}
}
\caption{Least squares aliasing errors and approximation errors for test function $f\in H^{3/4-\varepsilon}_{\textnormal{mix}}(\tor^d)$.}
\end{figure}

For $d=2$ spatial dimensions, we have a closer look at the sampling errors. In Figure~\ref{fig:num:fourier:H0.75:alias_d2}, we again plot the approximation errors $\tilde{a}_{m}$, $m=\lfloor n/(4\log n) \rfloor$. In addition, the aliasing errors $\|P_{m-1}f-S^m_{\bX}f \|_{L_2(\tor^d)}$, $m=\lfloor n/(4\log n) \rfloor$, which are the errors caused by Algorithm~\ref{algo1} since $$\|f-S^m_{\bX}f \|_{L_2(\tor^d)}^2 = \|f-P_{m-1}f \|_{L_2(\tor^d)}^2 + \|P_{m-1}f-S^m_{\bX}f \|_{L_2(\tor^d)}^2,$$ are shown as triangles. We observe that the aliasing errors nearly decay like the approximation errors, and that they are by one order of magnitude smaller. This corresponds to the behavior we observed in Figure~\ref{fig:num:fourier:H0.75:sampl}.

Moreover, we compare least squares using random point sets with least squares using quasi-random point sets. In particular, so-called Frolov lattices \cite{KaOeUlUl18}\footnotemark\footnotetext{In our numerical tests, we used the Frolov lattices constructed by the methods presented in \cite{KaOeUlUl18} which has been published at \url{https://ins.uni-bonn.de/content/software-frolov}.} are considered as sampling sets in $d=2$ spatial dimensions and used in Algorithm~\ref{algo1}. The resulting sampling errors almost coincide with the approximation errors. The aliasing errors are visualized in Figure~\ref{fig:num:fourier:H0.75:alias_d2} as circles. It is remarkable that they decay similarly and are even lower than the aliasing errors for random nodes in most cases. A similar behavior can be observed for dimension $d=3$, cf.\ Figure~\ref{fig:num:fourier:H0.75:alias_d3}.

In addition, we consider Fibonacci lattices in dimension $d=2$, cf.\ Figure~\ref{fig:num:fourier:H0.75:alias_d2}. For $n\geq 832040$, the matrices $\bL_m$, $m=\lfloor n/(4\log n) \rfloor$, contain at least two identical columns and correspondingly, the smallest eigenvalue of $\bL_m^{\ast}\,\bL_m$ is zero. Therefore, obtaining $S^m_{\bX}f$ via Algorithm~\ref{algo1} is not possible if the least squares solution is computed directly. An iterative method like LSQR may still work but the number of iterations may have to be restricted. In Figure~\ref{fig:num:fourier:H0.75:alias_d2}, the obtained aliasing errors via LSQR are shown as squares, and they are smaller than in the other cases but they seem to decay slower. However, when we decreased the tolerance parameter of the LSQR algorithm, we observed for $n\geq 832040$ aliasing errors and sampling errors larger than~1.

\subsubsection*{Kink test function $f$ from $H^{3/2-\varepsilon}_{\textnormal{mix}}(\tor^d)$}

Next, we consider the kink test function
$f\colon \tor^d\rightarrow\R,$
$$
f(\bx) = \prod_{i=1}^d \left(\frac{15}{4\sqrt{3}} \cdot 5^{3/4} \cdot \max\left(\frac{1}{5} - \left((x_i\bmod 1)-\frac{1}{2}\right)^2, 0\right)\right)
\in
H^{3/2-\varepsilon}_{\textnormal{mix}}(\tor^d)
$$
with Fourier coefficients
$$
 \hat{f}_{\bk} = \prod_{i=1}^d \begin{cases}
  \frac{5^{5/4}\sqrt{3}}{8} \, (-1)^{k_i} \, \frac{\sqrt{5} \, \sin(2k_i\pi/\sqrt{5}) - 2 k_i \pi \cos(2k_i\pi/\sqrt{5}) }{\pi^3 k_i^3} & \text{for } k_i\neq 0, \\
  \frac{5^{1/4}}{\sqrt{3}}& \text{for } k_i=0.
 \end{cases}
$$
Besides the different test function $f$, we use the same setting as before.

\begin{figure}[htb]
\subfloat[approximation error]{\label{fig:num:fourier:H1.5:trunc}
		\begin{tikzpicture}[baseline]
		\begin{axis}[
			font=\footnotesize,
			enlarge x limits=true,
			enlarge y limits=true,
			height=0.4\textwidth,
			grid=major,
			width=0.47\textwidth,
            xmode=log,
            ymode=log,
			xlabel={$m=|I|$},
			ylabel={$L_2$ approximation error},
			legend style={legend cell align=left, at={(1.0,1.35)}},
			legend columns = 2,
		]
		\addplot[dashed,red,mark=triangle,mark size=3pt,mark options={solid}] coordinates {
 (100,1.255e-02) (316,3.222e-03) (1000,8.070e-04) (3162,1.982e-04) (10000,4.541e-05) (31623,1.024e-05) (100000,2.250e-06) (316228,4.831e-07) (1000000,1.022e-07) %
};
\addlegendentry{$\tilde{a}_{m},d=2$}
\addplot [forget plot,black,domain=1e4:2e6, samples=100, dotted,ultra thick]{5*x^(-1.5)*(ln(x))^(1*1.5)};
		\addplot[dashed,blue,mark=square,mark size=2.2pt,mark options={solid}] coordinates {
 (100,4.979e-02) (316,1.964e-02) (1000,6.992e-03) (3162,2.385e-03) (10000,6.856e-04) (31623,1.959e-04) (100000,5.333e-05) (316228,1.402e-05) (1000000,3.564e-06) (3162278,8.787e-07) (10000000,2.113e-07)
};
\addlegendentry{$\tilde{a}_{m},d=3$}
\addplot [forget plot,black,domain=1e5:2e7, samples=100, dotted,ultra thick]{4*x^(-1.5)*(ln(x))^(2*1.5)};
		\addplot[dashed,magenta,mark=o,mark size=2.5pt,mark options={solid}] coordinates {
 (100,1.549e-01) (316,5.213e-02) (1000,2.445e-02) (3162,1.144e-02) (10000,4.166e-03) (31623,1.456e-03) (100000,4.838e-04) (316228,1.525e-04) (1000000,4.663e-05) (3162278,1.351e-05) (10000000,3.806e-06) (31622777,1.040e-06)
};
\addlegendentry{$\tilde{a}_{m},d=4$}
\addplot [forget plot,black,domain=1e6:1e8, samples=100, dotted,ultra thick]{1.4*x^(-1.5)*(ln(x))^(3*1.5)};
		\addplot[dashed,cyan,mark=triangle,mark size=3pt,mark options={solid,rotate=180}] coordinates {
 (100,3.116e-01) (316,1.420e-01) (1000,6.811e-02) (3162,2.980e-02) (10000,1.431e-02) (31623,6.076e-03) (100000,2.338e-03) (316228,8.965e-04) (1000000,3.178e-04) (3162278,1.070e-04) (10000000,3.488e-05) (31622777,1.096e-05)
};
\addlegendentry{$\tilde{a}_{m},d=5$}
\addplot [black,domain=1e6:1e8, samples=100, dotted,ultra thick]{0.22*x^(-1.5)*(ln(x))^(4*1.5)};
\addlegendentry{$m^{-1.5}(\log m)^{(d-1)\cdot 1.5}$}
		\end{axis}
		\end{tikzpicture}
}
\hfill
\subfloat[sampling error]{\label{fig:num:fourier:H1.5:sampl}
		\begin{tikzpicture}[baseline]
		\begin{axis}[
			font=\footnotesize,
			enlarge x limits=true,
			enlarge y limits=true,
			height=0.4\textwidth,
			grid=major,
			width=0.47\textwidth,
            xmode=log,
            ymode=log,
			xlabel={$n$},
			ylabel={$L_2$ error},
			legend style={legend cell align=left, at={(1.0,1.5)}},
			legend columns = 2,
		]
		\addplot[red,mark=triangle,mark size=3pt,mark options={solid}] coordinates {
 (100,2.918e-01) (316,9.308e-02) (1000,2.531e-02) (3162,1.276e-02) (10000,3.758e-03) (31623,1.121e-03) (100000,3.153e-04) (316228,8.443e-05) (1000000,2.137e-05) (3162278,5.278e-06)
};
\addlegendentry{$\tilde{g}_n, d=2$}
		\addplot[dashed,red,no marks,ultra thick] coordinates {
 (100,2.882e-01) (316,9.006e-02) (1000,2.476e-02) (3162,1.256e-02) (10000,3.709e-03) (31623,1.109e-03) (100000,3.123e-04) (316228,8.361e-05) (1000000,2.117e-05) (3162278,5.233e-06)
};
\addlegendentry{$\tilde{a}_{\lfloor n/(4\log n) \rfloor}, d=2$}
\addplot [forget plot,black,domain=1e5:6e6, samples=100, dotted,ultra thick]{20*x^(-1.5)*(ln(x))^(2*1.5)};
		\addplot[blue,mark=square,mark size=2.2pt,mark options={solid}] coordinates {
(100,5.686e-01) (316,3.496e-01) (1000,1.329e-01) (3162,5.215e-02) (10000,2.131e-02) (31623,9.251e-03) (100000,3.211e-03) (316228,1.128e-03) (1000000,3.694e-04)
};
\addlegendentry{$\tilde{g}_n, d=3$}
		\addplot[dashed,blue,no marks,ultra thick] coordinates {
(100,5.626e-01) (316,3.408e-01) (1000,1.297e-01) (3162,5.137e-02) (10000,2.101e-02) (31623,9.145e-03) (100000,3.172e-03) (316228,1.116e-03) (1000000,3.661e-04)
};
\addlegendentry{$\tilde{a}_{\lfloor n/(4\log n) \rfloor}, d=3$}
\addplot [forget plot,black,domain=2e5:4e6, samples=100, dotted,ultra thick]{6*x^(-1.5)*(ln(x))^(3*1.5)};
		\addplot[magenta,mark=o,mark size=2.5pt,mark options={solid}] coordinates {
 (100,7.204e-01) (316,5.194e-01) (1000,3.013e-01) (3162,1.600e-01) (10000,6.847e-02) (31623,2.853e-02) (100000,1.539e-02) (316228,6.396e-03) (1000000,2.434e-03)
};
\addlegendentry{$\tilde{g}_n, d=4$}
		\addplot[dashed,magenta,no marks,ultra thick] coordinates {
 (100,7.004e-01) (316,5.071e-01) (1000,2.952e-01) (3162,1.573e-01) (10000,6.760e-02) (31623,2.817e-02) (100000,1.521e-02) (316228,6.331e-03) (1000000,2.411e-03)
};
\addlegendentry{$\tilde{a}_{\lfloor n/(4\log n) \rfloor}, d=4$}
\addplot [forget plot,black,domain=4e5:4e6, samples=100, dotted,ultra thick]{0.7*x^(-1.5)*(ln(x))^(4*1.5)};
		\addplot[cyan,mark=triangle,mark size=3pt,mark options={solid,rotate=180}] coordinates {
 (100,8.071e-01) (316,6.242e-01) (1000,5.042e-01) (3162,3.195e-01) (10000,1.658e-01) (31623,7.738e-02) (100000,3.871e-02) (316228,2.051e-02) (1000000,9.692e-03)
};
\addlegendentry{$\tilde{g}_n, d=5$}
		\addplot[dashed,cyan,no marks,ultra thick] coordinates {
 (100,7.876e-01) (316,6.194e-01) (1000,4.937e-01) (3162,3.148e-01) (10000,1.633e-01) (31623,7.644e-02) (100000,3.829e-02) (316228,2.030e-02) (1000000,9.605e-03)
};
\addlegendentry{$\tilde{a}_{\lfloor n/(4\log n) \rfloor}, d=5$}
\addlegendimage{empty legend}
\addlegendentry{}
\addplot [black,domain=4e5:6e6, samples=100, dotted,ultra thick]{0.05*x^(-1.5)*(ln(x))^(5*1.5)};
\addlegendentry{$n^{-1.5}(\log n)^{d\cdot 1.5}$}
		\end{axis}
		\end{tikzpicture}
}
\caption{Approximation errors and least squares sampling errors for test function $f\in H^{3/2-\varepsilon}_{\textnormal{mix}}(\tor^d)$.}
\end{figure}

\begin{sloppypar}
In Figure~\ref{fig:num:fourier:H1.5:trunc}, we visualize the relative approximation errors $\tilde{a}_m:=\|f-P_{m-1}f \|_{L_2(\tor^d)}$ for spatial dimensions $d=2,3,4,5$. These errors should decay like $m^{-1.5+\varepsilon}(\log m)^{(d-1)\cdot (1.5-\varepsilon)}$ for sufficiently large $m$, and we observe that the obtained approximation errors nearly decay as the theoretical results suggest.
Next, we apply Algorithm~\ref{algo1} with random nodes to the test function $f$ using the iterative method LSQR. The resulting sampling errors $\tilde{g}_n:=\|f-S^m_{\bX}f \|_{L_2(\tor^d)}$ are depicted in Figure~\ref{fig:num:fourier:H1.5:sampl}. In addition, the graphs $\sim n^{-1.5}(\log n)^{d\cdot 1.5}$ are shown as dotted lines which roughly correspond to the theoretical upper bounds $n^{-1.5+\varepsilon}(\log n)^{d\cdot (1.5-\varepsilon)}$. The errors seem to decay according to this bound for $d=2$ and slower for $d=3,4,5$. Again, we also plot the corresponding approximation errors $\tilde{a}_m$ with $m=\lfloor n/(4\log n)\rfloor$ as thick dashed lines, and we observe that these approximation errors $\tilde{a}_{\lfloor n/(4\log n) \rfloor}$ almost coincide with the sampling errors. Correspondingly, we still have preasymptotic behavior for $d=3,4,5$.
\end{sloppypar}

\subsubsection*{Test function $f$ from $H^{11/2-\varepsilon}_{\textnormal{mix}}(\tor^d)$}

As a third test function~$f$, we consider an $L_2$-normalized product of periodic one-dimensional B-Splines of order $6$, where each factor depends on a single variable and is a piecewise polynomial of degree $5$, and therefore, we have $f\in H^{11/2-\varepsilon}_{\textnormal{mix}}(\tor^d)$.
 
\begin{figure}[htb]
\subfloat[approximation error]{\label{fig:num:fourier:H5.5:trunc}
		\begin{tikzpicture}[baseline]
		\begin{axis}[
			font=\footnotesize,
			enlarge x limits=true,
			enlarge y limits=true,
			height=0.4\textwidth,
			grid=major,
			width=0.48\textwidth,
            xmode=log,
            ymode=log,
			xlabel={$m=|I|$},
			ylabel={$L_2$ approximation error},
			legend style={legend cell align=left, at={(1.0,1.35)}},
			legend columns = 2,
		]
		\addplot[dashed,red,mark=triangle,mark size=3pt,mark options={solid}] coordinates {
 (100,3.833e-03) (316,1.179e-05) (1000,8.560e-08)
};
\addlegendentry{$\tilde{a}_m$, $d=2$}
\addplot [forget plot,black,domain=1e2:2e3, samples=100, dotted,ultra thick]{5e5*x^(-5.5)*(ln(x))^(1*5.5)};
		\addplot[dashed,blue,mark=square,mark size=2.2pt,mark options={solid}] coordinates {
 (100,2.397e-01) (316,4.165e-02) (1000,2.114e-03) (3162,3.598e-05) (10000,6.222e-07)
};
\addlegendentry{$\tilde{a}_m$, $d=3$}
\addplot [forget plot,black,domain=2e3:2e4, samples=100, dotted,ultra thick]{1.5e6*x^(-5.5)*(ln(x))^(2*5.5)};
		\addplot[dashed,magenta,mark=o,mark size=2.5pt,mark options={solid}] coordinates {
 (100,5.472e-01) (316,2.883e-01) (1000,1.266e-01) (3162,1.649e-02) (10000,3.263e-03) (31623,1.376e-04) (100000,1.716e-06)
};
\addlegendentry{$\tilde{a}_m$, $d=4$}
\addplot [forget plot,black,domain=2e4:2e5, samples=100, dotted,ultra thick]{2e5*x^(-5.5)*(ln(x))^(3*5.5)};
		\addplot[dashed,cyan,mark=triangle,mark size=3pt,mark options={solid,rotate=180}] coordinates {
 (100,7.841e-01) (316,5.711e-01) (1000,3.940e-01) (3162,1.865e-01) (10000,5.702e-02) (31623,1.877e-02) (100000,1.754e-03) (316228,8.122e-05) (1000000,2.098e-06)
};
\addlegendentry{$\tilde{a}_m$, $d=5$}
\addplot [black,domain=2e5:2e6, samples=100, dotted,ultra thick]{2e3*x^(-5.5)*(ln(x))^(4*5.5)};
\addlegendentry{$m^{-5.5}(\log m)^{(d-1)\cdot 5.5}$}
		\end{axis}
		\end{tikzpicture}
}
\hfill
\subfloat[sampling error]{\label{fig:num:fourier:H5.5:sampl}
		\begin{tikzpicture}[baseline]
		\begin{axis}[
			font=\footnotesize,
			enlarge x limits=true,
			enlarge y limits=true,
			height=0.4\textwidth,
			grid=major,
			width=0.48\textwidth,
            xmode=log,
            ymode=log,
			xlabel={$n$},
			ylabel={$L_2$ error},
			legend style={legend cell align=left, at={(1.0,1.5)}},
			legend columns = 2,
		]
		\addplot[red,mark=triangle,mark size=3pt,mark options={solid}] coordinates {
 (100,6.606e-01) (316,3.209e-01) (1000,9.403e-02) (3162,4.025e-03) (10000,3.798e-05) (31623,4.355e-07)
};
\addlegendentry{$\tilde{g}_n, d=2$}
		\addplot[dashed,red,no marks,ultra thick] coordinates {
 (100,6.398e-01) (316,3.151e-01) (1000,9.297e-02) (3162,3.951e-03) (10000,3.741e-05) (31623,4.303e-07)
};
\addlegendentry{$\tilde{a}_{\lfloor n/(4\log n) \rfloor}, d=2$}
\addplot [forget plot,black,domain=5e3:5e4, samples=100, dotted,ultra thick]{8e7*x^(-5.5)*(ln(x))^(2*5.5)};
		\addplot[blue,mark=square,mark size=2.2pt,mark options={solid}] coordinates {
 (100,8.661e-01) (316,7.161e-01) (1000,4.377e-01) (3162,2.501e-01) (10000,4.183e-02) (31623,9.425e-03) (100000,2.674e-04) (316228,4.008e-06) (1000000,3.799e-08)
};
\addlegendentry{$\tilde{g}_n, d=3$}
		\addplot[dashed,blue,no marks,ultra thick] coordinates {
 (100,8.867e-01) (316,7.255e-01) (1000,4.433e-01) (3162,2.536e-01) (10000,4.236e-02) (31623,9.542e-03) (100000,2.704e-04) (316228,4.048e-06) (1000000,3.799e-08)
};
\addlegendentry{$\tilde{a}_{\lfloor n/(4\log n) \rfloor}, d=3$}
		\addplot[magenta,mark=o,mark size=2.5pt,mark options={solid}] coordinates {
 (100,9.457e-01) (316,8.821e-01) (1000,7.460e-01) (3162,5.507e-01) (10000,3.430e-01) (31623,1.333e-01) (100000,5.039e-02) (316228,8.678e-03) (1000000,8.133e-04)
};
\addlegendentry{$\tilde{g}_n, d=4$}
		\addplot[dashed,magenta,no marks,ultra thick] coordinates {
 (100,9.609e-01) (316,9.165e-01) (1000,7.617e-01) (3162,5.549e-01) (10000,3.470e-01) (31623,1.349e-01) (100000,5.098e-02) (316228,8.763e-03) (1000000,8.208e-04)
};
\addlegendentry{$\tilde{a}_{\lfloor n/(4\log n) \rfloor}, d=4$}
		\addplot[cyan,mark=triangle,mark size=3pt,mark options={solid,rotate=180}] coordinates {
 (100,9.774e-01) (316,9.484e-01) (1000,8.924e-01) (3162,7.874e-01) (10000,6.209e-01) (31623,4.247e-01) (100000,2.612e-01) (316228,1.198e-01) (1000000,2.361e-02)
};
\addlegendentry{$\tilde{g}_n, d=5$}
		\addplot[dashed,cyan,no marks,ultra thick] coordinates {
 (100,9.965e-01) (316,9.639e-01) (1000,9.215e-01) (3162,7.953e-01) (10000,6.322e-01) (31623,4.297e-01) (100000,2.638e-01) (316228,1.210e-01) (1000000,2.383e-02)
};
\addlegendentry{$\tilde{a}_{\lfloor n/(4\log n) \rfloor}, d=5$}
\addlegendimage{empty legend}
\addlegendentry{}
\addplot [black,domain=8e4:2e6, samples=100, dotted,ultra thick]{4e7*x^(-5.5)*(ln(x))^(3*5.5)};
\addlegendentry{$n^{-5.5}(\log n)^{d\cdot 5.5}$}
		\end{axis}
		\end{tikzpicture}
}
\caption{Approximation errors and least squares sampling errors for test function $f\in H^{11/2-\varepsilon}_{\textnormal{mix}}(\tor^d)$.}
\end{figure}

In Figure~\ref{fig:num:fourier:H5.5:trunc}, the relative approximation errors $\tilde{a}_m:=\|f-P_{m-1}f \|_{L_2(\tor^d)}$ are visualized for spatial dimensions $d=2,3,4,5$, which roughly decay like $m^{-5.5+\varepsilon}(\log m)^{(d-1)\cdot (5.5-\varepsilon)}$ for sufficiently large $m$.
The sampling errors $\tilde{g}_n:=\|f-S^m_{\bX}f \|_{L_2(\tor^d)}$ when applying Algorithm~\ref{algo1} with random nodes to the test function $f$ using the iterative method LSQR are depicted in Figure~\ref{fig:num:fourier:H5.5:sampl}. In addition, the graphs $\sim n^{-5.5}(\log n)^{d\cdot 5.5}$ are plotted as dotted lines which correspond to the theoretical upper bounds. The errors seem to decay roughly according to this bound for $d=2$ and $d=3$ or slightly slower. Again, the corresponding approximation errors $\tilde{a}_m$ with $m=n/(4\log n)$ are shown as thick dashed lines, and we observe that these approximation errors $\tilde{a}_{\lfloor n/(4\log n) \rfloor}$ almost coincide with the sampling errors. This means we still have preasymptotic behavior.

\subsection{Integration of functions from spaces with mixed smoothness}

Extensive numerical tests on integration using random point sets were performed by Oettershagen~\cite{Oe17}, cf.\ in particular \cite[Sec.~4.1]{Oe17} for tests concerning numerical integration in Sobolev spaces with mixed smoothness $H^{s}_{\textnormal{mix}}(\tor^d)$. These numerical tests, performed for $d\in\{2,4,8,16\}$ spatial dimensions and smoothness $s\in\{1,2,3\}$, suggest that the worst case cubature error may decay with a main rate of $n^{-s}$ with additional log factors. This is a remarkable behavior since plain Monte-Carlo with random points usually leads to an error decay of $n^{-1/2}$.

However, the corresponding theoretical results in \cite[Sec.~4.2]{Oe17} only give a main rate of $n^{-s+1/2}$. %
We highlight that our results obtained in this paper bridge this gap of $1/2$ in the main rate, since we show a worst-case error of $\sim n^{-s}(\log n)^{d\,s}$ in Corollary \ref{cor:fourier:int_error_wc}, i.e., our theoretical main rate corresponds to the observations by Oettershagen~\cite{Oe17}. 
Moreover, Algorithm~\ref{algo1} guarantees suitable error bounds in the preasymptotic setting, cf.\ Corollary~\ref{cor:fourier:int_error_wc} together with \eqref{eq:sigma_upper_bound_kuehn}.
More details on cubature rules based on least squares additionally refined using some variance reduction technique can be found in \cite{MiNo18}.

\subsection{Recovery and integration for the non-periodic case}

Now we consider the non-periodic situation. We use the Chebyshev measure $\tilde{\varrho}_D(\mathrm{d}\bx):=\prod_{t=1}^d(\pi\sqrt{1-x_t^2})^{-1} \, \mathrm{d}\bx$ as random sampling scheme on $D = [-1,1]^d$. We further define the non-periodic space $\tilde{H}^s_{\textnormal{mix}}([-1,1]^d)$ via the reproducing kernel 
\begin{equation*}
	\tilde{K}^1_{s,\ast}(x,y):=1+2\sum\limits_{h\in \N_0} \frac{\cos(h\arccos(x))\cos(h\arccos(y))}{w_{s,\ast}(h)^2}\,,\quad x,y\in \tor\,,
\end{equation*}
and its tensor product
\begin{equation*}
		\tilde{K}^d_{s,\ast}(\bx,\by):=	K^1_{s,\ast}(x_1,y_1)\otimes \cdots \otimes 	K^1_{s,\ast}(x_d,y_d)\,,\quad \bx,\by \in \tor^d\,.
\end{equation*}
The space $\tilde{H}^s_{\textnormal{mix}}([-1,1]^d)$ is embedded into $L_2(D,\tilde{\varrho}_D)$ if and only if $s>1/2$. We denote the $k$-th basis index of $\eta_k$, $k=1,\ldots,m-1$, by $\boldsymbol{h}_k:=\big(h_{k,t}\big)_{t=1}^d\in\N_0^d$, and we have $\eta_1 \equiv 1$ and 
$$\eta_k(\bx)=\prod_{t=1}^d \sqrt{2}^{\min\{h_{k,t},1\}} \cos(h_{k,t}\arccos x_t)$$
if $k>1$.
Moreover, for the vector $\boldsymbol{b}\in\C^{m-1}$ in~\eqref{eq:compute_int_weights:b}, we have
\begin{equation}
 b_k:=\int_D \eta_k\,\mathrm{d}\mu_D = \prod_{t=1}^d
 \begin{cases}
   2 & \text{for } h_{k,t}=0,\\
   \frac{2\sqrt{2}}{1-h_{k,t}^2} & \text{for } h_{k,t}\in 2\N,\\
   0 & \text{for } h_{k,t}\in 2\N-1,
 \end{cases}
\label{equ:bk:chebyshev}
\end{equation}
for $\mu_D\equiv 1$.

\begin{figure}[htb]
	\centering
	\begin{tikzpicture}[font=\scriptsize]
	\begin{axis}[width=0.6\textwidth,height=0.22\textwidth,enlarge x limits=false,enlarge y limits=true,xmin=-3,xmax=2,ymin=0,ymax=1.5,xtick={-1,0,1} %
	]
	\addplot[samples=100,black!70,domain=-3:-2.5] {0};
	\addplot[samples=100,black!70,domain=-2.5:-1] {1.459419514686549*(1/4*(5+2*x)};
    \addplot[samples=100,blue,domain=-1:-0.5,ultra thick] {1.459419514686549*1/4*(5+2*x)};
    \addplot[samples=100,blue,domain=-0.5:1,ultra thick] {1.459419514686549*1/4*(3-2*x)};
	\addplot[samples=100,black!70,domain=1:1.5] {1.459419514686549*1/4*(3-2*x)};
	\addplot[samples=100,black!70,domain=1.5:2] {0};
	\end{axis}
	\end{tikzpicture}
	\vspace{-0.6em}
	\caption{Dilated, scaled, and shifted B-Spline of order 2 considered in interval $[-1,1]$.}
	\label{fig:Bspline2:nonper}
\end{figure}
\noindent For the numerical experiments we consider the test function
$$
 f\colon \R\rightarrow\R,
 \quad
 f(\bx)
 :=
 \left(\frac{3\pi}{49\pi-48\sqrt{3}}\right)^{d/2}
 \,
 \prod_{i=1}^d
 \begin{cases}
   5+2x & \text{for } -5/2\leq x_i < -1, \\
   3-2x & \text{for } -1 \leq x_i < 3/2, \\
   0 & \text{otherwise},
  \end{cases}
$$
on $D:=[-1,1]^d$,
where the one-dimensional version is depicted in Figure~\ref{fig:Bspline2:nonper}.
For the Chebyshev coefficients of $f$, we have
$$
 \hat{f}_{\bk}
 =
 \left(\frac{3\pi}{49\pi-48\sqrt{3}}\right)^{d/2}
 \,
 \prod_{i=1}^d \begin{cases}
  4\cdot\dfrac{\sqrt{3}\,k_i\cos(2\pi k_i/3) + \sin(2\pi k_i/3)}{\sqrt{2}(-k_i + k_i^3)\pi} & \text{for } k_i\geq 2, \\
  -(3\sqrt{6} + 2\sqrt{2}\pi)/(6\pi) & \text{for } k_i=1, \\
  11/3 - 2\sqrt{3}/\pi & \text{for } k_i=0,
 \end{cases}
$$
and consequently, $f\in\tilde{H}^{3/2-\varepsilon}_{\textnormal{mix}}([-1,1]^d)$ for $\varepsilon>0$.

\begin{figure}[htb]
\centering
	\subfloat[$d=2$, $n=100$]{
		\begin{tikzpicture}
		\begin{axis}[
		plot box ratio = 1 1,
		xmin=-1,xmax=1,ymin=-1,ymax=1,
		font=\footnotesize,
		height=0.38\textwidth,
		width=0.38\textwidth,
		]
		\addplot[only marks,black,mark=*,mark size=1pt,mark options={solid}] coordinates {
			(-0.604,0.999) (-0.935,0.998) (-0.382,-0.707) (0.859,0.381) (0.879,0.657) (0.496,-0.006) (0.992,0.891) (0.984,-0.825) (-0.938,-0.925) (0.449,-1.000) (-0.060,-0.887) (-0.978,0.107) (0.949,0.033) (-0.009,-0.353) (0.753,-0.561) (-0.982,-1.000) (-0.679,0.836) (0.457,0.421) (-0.913,-0.162) (0.422,-0.720) (-0.354,0.815) (0.741,0.851) (0.462,0.894) (-0.364,-0.103) (-0.588,0.996) (-0.481,0.878) (0.968,-0.970) (-0.137,-0.734) (-0.180,0.840) (-0.143,-0.292) (0.985,-0.106) (-0.772,0.988) (0.933,0.994) (-0.998,0.740) (-0.477,0.232) (0.742,0.965) (-0.474,-0.014) (0.365,-0.867) (-0.147,-0.520) (-0.999,-0.597) (0.306,0.182) (0.314,-0.978) (0.452,-0.977) (-0.474,-0.866) (-0.021,0.986) (0.592,0.829) (0.939,0.614) (-0.985,0.996) (-0.953,1.000) (0.453,0.999) (0.738,-0.891) (0.999,0.992) (-0.923,0.096) (-0.413,-0.973) (0.526,-0.557) (-0.380,0.946) (0.909,0.868) (-0.608,-1.000) (-0.913,0.426) (0.516,0.815) (-0.804,-0.718) (0.916,-0.980) (0.710,-0.677) (-0.251,-0.948) (-1.000,-0.051) (-0.892,-0.252) (-0.652,-0.364) (0.921,0.937) (0.313,-0.247) (-0.533,-0.628) (-0.627,-0.989) (-0.202,-0.772) (0.955,-0.731) (0.972,-0.996) (0.017,0.594) (0.390,-1.000) (0.636,0.341) (-0.997,-0.285) (0.233,-0.709) (-0.989,0.461) (-0.241,-0.535) (-0.101,-0.947) (-0.997,0.111) (-0.979,-0.869) (0.972,0.934) (0.107,0.499) (-0.406,0.888) (-0.415,0.735) (-0.889,0.822) (-0.892,0.406) (-0.452,0.614) (-0.493,0.559) (0.779,-0.533) (-0.042,0.099) (-0.110,0.922) (-0.916,-0.985) (0.937,0.001) (0.521,-0.210) (0.869,0.955) (0.472,0.398)
		};
		\end{axis}
		\end{tikzpicture}
	}
\hspace{5em}
	\subfloat[$d=3$, $n=316$]{
		\begin{tikzpicture}
		\begin{axis}[axis background/.style={fill=white},
		every axis/.append style={font=\footnotesize},
		width=0.40\textwidth,
		height=0.40\textwidth,
		enlargelimits=false,
		axis lines = none,
		enlargelimits=false,
		clip=false,
		view={42}{25},
		grid=major,
		plot box ratio = 1 1 1,
		clip mode=individual,
		tickwidth=0pt,
		z buffer=sort,
		xmin=-1,xmax=1,
		ymin=-1,ymax=1,
		zmin=-1, zmax=1,
		]
		\addplot3[black!30,very thick,-] coordinates {(-1,1,1) (-1,1,-1)};
		\addplot3[black!30,very thick,-] coordinates {(-1,1,-1) (-1,-1,-1)};
		\addplot3[black!30,very thick,-] coordinates {(-1,1,-1) (1,1,-1)};
		\addplot3[black,ultra thick,-] coordinates {(-1,-1,1) (-1,-1,-1)};
		\addplot3[black,ultra thick,-] coordinates {(-1,-1,-1) (1,-1,-1)};
		\addplot3+[only marks, mark size=1pt, mark=ball, solid, ball color=black!75, mark options={black!75, draw=black}] coordinates{
			(0.999,-0.995,-0.819) (0.264,0.976,0.700) (0.877,0.972,-0.077) (-0.098,-0.431,0.259) (0.142,-0.685,-0.683) (-0.846,0.784,-0.965) (0.410,0.036,-0.593) (0.993,0.195,0.966) (0.107,0.548,-0.092) (-0.064,-0.958,-0.663) (0.797,0.921,0.735) (-0.958,0.984,0.755) (0.931,-0.954,-0.998) (-1.000,0.885,0.930) (0.036,0.960,0.806) (0.532,-0.999,-0.525) (0.205,0.074,0.506) (-0.068,0.455,-0.597) (-0.534,-0.353,0.976) (0.314,0.452,-0.633) (0.963,0.735,-0.863) (0.542,0.236,0.696) (0.986,0.403,-0.072) (-0.952,-0.642,0.464) (0.035,0.876,0.994) (-0.941,-0.228,-0.554) (0.142,0.402,-0.971) (0.559,0.866,0.793) (-0.895,-0.661,-0.410) (0.766,-0.307,0.264) (-0.312,0.170,-0.839) (0.949,-0.175,-0.971) (-0.570,-0.960,-0.886) (-0.982,0.695,0.856) (-0.772,-0.995,-0.585) (-0.966,-0.219,-0.357) (0.988,-0.993,0.456) (-0.999,0.997,0.840) (-0.936,0.070,0.742) (-0.999,0.960,-0.385) (-0.778,-0.359,-0.827) (-0.999,0.935,0.444) (0.433,-0.696,-1.000) (0.053,0.959,-0.630) (-0.717,0.946,0.422) (-0.820,-0.313,0.285) (-0.960,0.383,-0.522) (-0.402,0.211,0.999) (-1.000,0.946,-0.926) (0.317,-0.782,0.675) (-0.066,-1.000,-0.881) (-0.567,0.930,-0.204) (0.695,-0.739,0.864) (0.875,-0.718,-0.426) (-0.679,0.428,-0.971) (0.045,-0.576,0.178) (-0.166,0.854,-0.652) (-0.478,0.934,-0.915) (0.875,0.985,0.587) (0.998,-0.336,-0.044) (0.387,0.811,-0.840) (-0.842,-0.881,-0.950) (-0.926,0.383,-0.806) (0.249,0.585,-0.912) (-0.513,-0.964,-0.526) (-0.666,0.098,0.988) (0.157,0.939,0.101) (-0.843,-0.676,-0.712) (0.532,-0.740,-0.196) (-0.985,0.876,0.507) (-0.994,-0.014,0.149) (-0.622,-0.888,0.336) (-0.971,-0.749,0.909) (0.952,0.476,-0.366) (0.886,0.196,0.705) (-0.322,-0.608,-0.881) (-0.862,0.375,-0.983) (-0.217,0.627,-0.483) (-0.260,-0.636,0.840) (0.658,-0.272,-0.375) (-0.290,-0.419,0.996) (0.989,0.773,0.325) (0.962,-0.911,-0.487) (0.988,0.953,0.975) (-0.465,-0.614,0.950) (-0.612,-0.001,0.892) (0.937,0.778,-0.692) (-0.790,0.528,-0.624) (-0.917,-0.891,0.770) (0.064,0.579,0.591) (0.956,-0.919,-0.024) (-0.301,0.986,0.995) (-0.779,0.937,-0.075) (-0.970,0.047,1.000) (0.978,0.534,-0.030) (0.318,0.997,0.011) (-0.375,-0.923,0.748) (0.495,0.712,-0.344) (0.794,0.856,-0.146) (-0.500,0.773,-0.914) (0.569,-0.826,0.726) (-0.802,-0.222,0.204) (0.992,-0.999,0.450) (0.617,-0.522,0.805) (-0.866,-0.912,-0.770) (-0.179,-0.155,0.902) (0.406,0.225,0.956) (0.976,0.794,-0.608) (0.092,-0.249,0.652) (0.387,0.102,-0.483) (0.538,0.973,-0.655) (0.971,0.950,-0.860) (-0.887,0.140,0.493) (-0.292,0.952,0.112) (-0.556,-0.987,0.859) (0.572,0.825,-0.668) (0.942,-0.496,-0.851) (-0.029,-0.753,-0.215) (0.820,0.848,-0.076) (-0.519,-0.459,0.781) (0.156,-0.868,-0.917) (-0.342,-0.996,-0.987) (0.948,0.990,0.390) (0.525,0.537,-0.996) (0.858,0.906,-0.910) (-0.941,0.982,0.574) (-0.639,0.536,0.049) (0.745,0.967,0.133) (-0.778,-0.692,0.824) (-0.994,0.936,0.797) (-0.198,0.969,0.006) (0.980,-0.588,-0.646) (-0.680,-0.974,-0.930) (0.217,0.745,0.937) (-0.427,-0.965,0.996) (-0.996,0.658,0.984) (0.001,-0.163,0.420) (0.856,0.457,-0.960) (-0.687,-0.862,-0.973) (-0.990,0.099,-0.840) (-0.715,0.408,-0.995) (0.484,-0.999,0.649) (-0.511,-0.093,-0.899) (0.480,0.937,0.414) (-0.999,-0.937,0.790) (0.633,-0.180,0.207) (-0.335,0.481,-0.376) (-0.994,-0.601,0.847) (-0.291,0.988,-0.575) (0.628,0.573,-0.537) (0.970,-0.796,0.780) (0.999,0.171,0.012) (0.999,0.929,-0.682) (0.947,0.181,0.667) (0.912,0.712,-0.872) (0.811,0.946,0.969) (0.172,0.879,-0.766) (-0.252,-0.917,-0.966) (-0.921,-0.829,-0.973) (0.467,-0.886,0.749) (0.910,-0.996,0.840) (-0.650,-1.000,0.686) (0.877,0.298,0.725) (-0.975,0.336,0.993) (-0.372,0.968,-0.105) (-0.666,0.670,0.784) (-0.887,-0.585,-0.241) (0.400,-0.014,0.984) (-0.545,-0.689,-1.000) (0.992,0.668,0.998) (-0.004,-0.933,0.219) (0.681,0.698,-0.464) (-0.999,0.827,0.180) (0.325,0.696,-0.588) (0.238,-0.490,-0.878) (0.822,0.262,0.496) (0.468,-0.097,0.547) (0.928,-0.963,0.795) (0.953,0.859,-0.458) (-0.888,0.552,-0.707) (-0.822,-0.668,-0.468) (0.792,0.995,-0.513) (-0.982,-0.838,0.974) (-0.339,-0.588,0.720) (-0.588,-0.275,-0.617) (0.671,-0.736,0.327) (-0.947,-0.818,0.742) (0.795,0.601,0.885) (0.776,-0.843,0.953) (0.102,0.070,0.296) (-0.700,-0.597,-0.977) (-0.975,-0.117,-0.774) (0.385,0.973,0.917) (-0.997,-0.779,-0.266) (-0.981,-1.000,0.485) (-0.918,0.957,0.777) (-0.155,0.536,-0.712) (0.515,0.047,0.170) (0.900,0.069,-0.751) (-0.005,-0.948,-1.000) (-0.988,1.000,0.743) (-0.552,-0.429,-0.998) (0.430,0.383,-0.330) (0.783,-0.329,-0.701) (-0.615,-1.000,-0.801) (-0.007,0.142,0.488) (0.842,0.029,0.964) (0.294,-0.714,-1.000) (0.114,-0.990,0.699) (-0.507,0.879,0.808) (-0.274,-0.989,0.958) (0.798,-0.546,0.997) (-0.473,0.777,-0.930) (-0.117,0.996,0.060) (-0.403,0.268,0.848) (0.535,0.767,0.417) (0.211,0.656,0.990) (0.459,0.457,0.986) (-0.028,0.494,0.995) (-0.314,0.982,-0.978) (-0.071,0.110,-0.195) (-0.978,-0.314,-0.534) (-0.276,0.510,-0.913) (0.995,-0.186,-0.912) (0.772,-0.914,0.820) (0.659,-0.190,0.078) (0.741,0.149,0.985) (0.722,-0.324,0.999) (0.881,-0.314,-0.279) (0.750,0.710,-0.394) (0.998,0.070,-0.757) (-0.939,-0.030,-0.169) (-0.321,-0.961,-0.670) (-0.025,-0.705,-0.128) (-0.689,-0.164,-0.209) (-0.993,0.420,-0.685) (0.997,0.612,-0.913) (0.653,1.000,-0.251) (-0.894,0.977,0.022) (-0.894,-0.958,-0.524) (0.600,0.989,0.548) (-0.365,-0.872,-0.178) (-0.971,0.996,-0.201) (-0.276,0.300,0.796) (-0.901,-0.728,0.837) (0.807,-0.976,0.712) (0.976,-0.588,0.789) (-1.000,0.833,0.004) (0.454,-0.971,0.172) (0.532,-0.441,0.399) (-0.699,-0.915,-0.953) (0.881,-0.879,-0.717) (-0.603,-0.981,0.356) (-0.996,0.592,-0.575) (-0.957,-0.963,0.744) (0.805,0.359,-0.668) (0.616,-0.855,-0.992) (0.777,0.458,-0.596) (0.380,-0.004,0.176) (0.853,-0.626,0.995) (-0.994,0.862,0.560) (-0.293,0.842,0.632) (-0.708,0.981,0.184) (-0.060,-1.000,0.105) (0.931,0.427,0.925) (-0.636,-1.000,-0.330) (0.306,-0.945,-0.131) (-0.997,-0.944,-0.794) (-0.883,-0.405,0.723) (0.867,-0.730,0.146) (-0.818,-0.465,-0.433) (0.860,-0.708,0.590) (-0.487,-0.347,-0.942) (0.512,-0.246,-1.000) (-0.986,0.426,0.301) (0.671,0.785,0.479) (-0.002,-0.661,0.249) (0.984,0.882,0.960) (-0.613,0.686,0.414) (-0.378,0.854,0.080) (-0.937,-0.841,-0.685) (0.994,-0.979,-0.987) (-0.210,0.340,0.666) (-0.965,0.592,-0.870) (0.629,-0.939,-0.636) (0.021,-0.955,-0.686) (-0.514,0.303,0.147) (0.715,0.831,0.129) (0.870,0.835,0.787) (-0.699,-0.961,0.917) (0.108,0.664,0.785) (-0.202,0.955,0.995) (-0.880,-0.150,-0.844) (0.310,-0.975,0.976) (0.981,-0.726,-0.990) (-0.943,0.666,-0.728) (0.951,-0.992,-1.000) (0.158,-0.235,0.513) (-0.226,0.126,0.956) (-0.888,0.989,0.226) (-0.713,-0.792,-0.883) (-0.165,-0.934,0.007) (0.103,0.443,0.211) (-0.441,-0.943,-0.830) (0.757,0.999,-0.525) (0.932,-0.711,0.628) (0.671,0.328,0.166) (-0.290,0.839,-0.902) (0.321,0.861,-0.367) (0.429,-0.034,-0.319) (-0.921,0.731,0.239) (-0.012,0.965,-0.998) (-0.561,0.952,0.241) (0.244,-0.782,0.977) (0.984,0.147,-0.944) (-0.228,0.780,0.666)
		};
		\addplot3[black,ultra thick,-] coordinates {(-1,-1,1) (-1,1,1)};
		\addplot3[black,ultra thick,-] coordinates {(-1,1,1) (1,1,1)};
		\addplot3[black,ultra thick,-] coordinates {(1,1,1) (1,-1,1)};
		\addplot3[black,ultra thick,-] coordinates {(1,-1,1) (-1,-1,1)};
		\addplot3[black,ultra thick,-] coordinates {(1,-1,1) (1,-1,-1)};
		\addplot3[black,ultra thick,-] coordinates {(1,-1,-1) (1,1,-1)};
		\addplot3[black,ultra thick,-] coordinates {(1,1,-1) (1,1,1)};
		\end{axis}
		\end{tikzpicture}
	}
	\caption{Realizations of random nodes with respect to the Chebyshev measure $\tilde{\varrho}_D(\bx)$.}\label{fig:rand_nodes_chebyshev}
\end{figure}

\begin{figure}[htb]
\subfloat[sampling and approximation eror]{\label{fig:Bspline2:nonper:errors:sampl}
		\begin{tikzpicture}[baseline]
		\begin{axis}[
			font=\footnotesize,
			enlarge x limits=true,
			enlarge y limits=true,
			height=0.4\textwidth,
			grid=major,
			width=0.47\textwidth,
            xmode=log,
            ymode=log,
			xlabel={$n$},
			ylabel={$L_2$ error},
			legend style={legend cell align=left, at={(1.0,1.55)}},
			legend columns = 2,
		]
		\addplot[red,mark=triangle,mark size=3pt,mark options={solid}] coordinates {
 (100,1.885e-01) %
 (316,5.042e-02) %
 (1000,1.371e-02) %
 (3162,3.994e-03) %
 (10000,1.202e-03) %
 (31623,3.257e-04) %
 (100000,8.407e-05) %
 (316228,2.114e-05) %
};
\addlegendentry{$\tilde{g}_n, d=2$}
		\addplot[dashed,red,no marks,ultra thick] coordinates {
 (100,1.840e-01) (316,4.933e-02) (1000,1.349e-02) (3162,3.935e-03) (10000,1.187e-03) (31623,3.219e-04) (100000,8.321e-05) (316228,2.094e-05)
};
\addlegendentry{$\tilde{a}_{\lfloor n/(4\log n) \rfloor}, d=2$}
\addplot [forget plot,black,domain=5e3:4e5, samples=100, dotted,ultra thick]{3*x^(-1.5)*(ln(x))^(2*1.5)};
		\addplot[blue,mark=square,mark size=2.2pt,mark options={solid}] coordinates {
 (100,3.146e-01) %
 (316,1.590e-01) %
 (1000,5.400e-02) %
 (3162,2.046e-02) %
 (10000,7.250e-03) %
 (31623,2.386e-03) %
 (100000,7.643e-04) %
 (316228,2.315e-04) %
};
\addlegendentry{$\tilde{g}_n, d=3$}
		\addplot[dashed,blue,no marks,ultra thick] coordinates {
 (100,3.072e-01) (316,1.556e-01) (1000,5.301e-02) (3162,2.015e-02) (10000,7.161e-03) (31623,2.359e-03) (100000,7.567e-04) (316228,2.294e-04)
};
\addlegendentry{$\tilde{a}_{\lfloor n/(4\log n) \rfloor}, d=3$}
\addplot [forget plot,black,domain=1e4:4e5, samples=100, dotted,ultra thick]{0.75*x^(-1.5)*(ln(x))^(3*1.5)};
		\addplot[magenta,mark=o,mark size=2.5pt,mark options={solid}] coordinates {
 (100,3.959e-01) %
 (316,2.826e-01) %
 (1000,1.219e-01) %
 (3162,5.982e-02) %
 (10000,2.364e-02) %
 (31623,9.075e-03) %
 (100000,3.532e-03) %
 (316228,1.236e-03) %
};
\addlegendentry{$\tilde{g}_n, d=4$}
		\addplot[dashed,magenta,no marks,ultra thick] coordinates {
 (100,3.856e-01) (316,2.771e-01) (1000,1.195e-01) (3162,5.880e-02) (10000,2.334e-02) (31623,8.972e-03) (100000,3.497e-03) (316228,1.225e-03)
};
\addlegendentry{$\tilde{a}_{\lfloor n/(4\log n) \rfloor}, d=4$}
\addplot [forget plot,black,domain=2e4:4e5, samples=100, dotted,ultra thick]{0.08*x^(-1.5)*(ln(x))^(4*1.5)};
		\addplot[cyan,mark=triangle,mark size=3pt,mark options={solid,rotate=180}] coordinates {
 (100,5.176e-01) %
 (316,3.964e-01) %
 (1000,2.393e-01) %
 (3162,1.230e-01) %
 (10000,5.373e-02) %
 (31623,2.605e-02) %
 (100000,1.074e-02) %
 (316228,4.316e-03) %
};
\addlegendentry{$\tilde{g}_n, d=5$}
		\addplot[dashed,cyan,no marks,ultra thick] coordinates {
 (100,5.050e-01) (316,3.890e-01) (1000,2.347e-01) (3162,1.209e-01) (10000,5.299e-02) (31623,2.573e-02) (100000,1.063e-02) (316228,4.276e-03)
};
\addlegendentry{$\tilde{a}_{\lfloor n/(4\log n) \rfloor}, d=5$}
\addplot [forget plot,black,domain=5e3:4e5, samples=100, dotted,ultra thick]{7e-3*x^(-1.5)*(ln(x))^(5*1.5)};
\addlegendimage{empty legend}
\addlegendentry{}
\addlegendimage{black,dotted,ultra thick}
\addlegendentry{$\sim n^{-1.5} (\log n)^{d\cdot 1.5}$}
		\end{axis}
		\end{tikzpicture}
}
\hfill
\subfloat[integration error]{\label{fig:Bspline2:nonper:errors:int}
		\begin{tikzpicture}[baseline]
		\begin{axis}[
			font=\footnotesize,
			enlarge x limits=true,
			enlarge y limits=true,
			height=0.4\textwidth,
			grid=major,
			width=0.47\textwidth,
            xmode=log,
            ymode=log,
			xlabel={$n$},
			ylabel={integration error},
			legend style={legend cell align=left, at={(1.0,1.35)}},
			legend columns = 2,
		]
		\addplot[red,mark=triangle,mark size=3pt,mark options={solid},error bars/.cd, y dir=plus, y explicit, error bar style={solid}, error mark options={rotate=90,mark size=2,thick}] coordinates {
(1000,2.023e-03) -= (0,2.018e-03) += (0,5.542e-03) %
(3162,3.552e-04) -= (0,3.533e-04) += (0,6.312e-04) %
(10000,5.814e-05) -= (0,5.663e-05) += (0,1.450e-04) %
(31623,9.367e-06) -= (0,9.036e-06) += (0,2.033e-05) %
(100000,1.226e-06) -= (0,1.224e-06) += (0,2.543e-06) %
(316228,1.922e-07) -= (0,1.891e-07) += (0,4.727e-07) %
};
\addlegendentry{$d=2$}
\addplot [forget plot,black,domain=3e4:4e5, samples=100, dotted,ultra thick]{1.5*x^(-2)*(ln(x))^(2*2)};
		\addplot[blue,mark=square,mark size=2.2pt,mark options={solid},error bars/.cd, y dir=plus, y explicit, error bar style={solid}, error mark options={rotate=90,mark size=2,thick}] coordinates {
(1000,1.854e-02) -= (0,1.844e-02) += (0,4.534e-02) %
(3162,3.843e-03) -= (0,3.789e-03) += (0,1.046e-02) %
(10000,8.027e-04) -= (0,7.778e-04) += (0,2.430e-03) %
(31623,1.574e-04) -= (0,1.569e-04) += (0,3.758e-04) %
(100000,3.039e-05) -= (0,3.006e-05) += (0,5.760e-05) %
(316228,5.312e-06) -= (0,5.232e-06) += (0,1.139e-05) %
};
\addlegendentry{$d=3$}
\addplot [forget plot,black,domain=3e4:4e5, samples=100, dotted,ultra thick]{0.2*x^(-2)*(ln(x))^(3*2)};
		\addplot[magenta,mark=o,mark size=2.5pt,mark options={solid},error bars/.cd, y dir=plus, y explicit, error bar style={solid}, error mark options={rotate=90,mark size=2,thick}] coordinates {
(1000,4.286e-01) -= (0,2.755e-01) += (0,2.062e-01) %
(3162,4.287e-02) -= (0,4.248e-02) += (0,8.150e-02) %
(10000,8.071e-03) -= (0,7.769e-03) += (0,2.773e-02) %
(31623,1.513e-03) -= (0,1.510e-03) += (0,3.316e-03) %
(100000,3.119e-04) -= (0,3.118e-04) += (0,6.185e-04) %
(316228,7.721e-05) -= (0,7.502e-05) += (0,1.744e-04) %
};
\addlegendentry{$d=4$}
\addplot [forget plot,black,domain=3e4:4e5, samples=100, dotted,ultra thick]{0.023*x^(-2)*(ln(x))^(4*2)};
		\addplot[cyan,mark=triangle,mark size=3pt,mark options={solid,rotate=180},error bars/.cd, y dir=plus, y explicit, error bar style={solid}, error mark options={rotate=90,mark size=2,thick}] coordinates {
(1000,1.341e+00) -= (0,1.080e+00) += (0,1.449e+00) %
(3162,1.188e+00) -= (0,3.182e-01) += (0,3.419e-01) %
(10000,1.351e-01) -= (0,1.108e-01) += (0,9.192e-02) %
(31623,1.377e-02) -= (0,1.359e-02) += (0,3.556e-02) %
(100000,2.618e-03) -= (0,2.612e-03) += (0,5.817e-03) %
(316228,6.181e-04) -= (0,6.166e-04) += (0,1.677e-03) %
};
\addlegendentry{$d=5$}
\addplot [forget plot,black,domain=3e4:4e5, samples=100, dotted,ultra thick]{0.0015*x^(-2)*(ln(x))^(5*2)};
\addlegendimage{black,dotted,ultra thick}
\addlegendentry{$\sim n^{-2} (\log n)^{d\cdot 2}$}
\addplot [forget plot,black,domain=3e4:4e5, samples=100, solid,ultra thick]{0.005*x^(-1.5)*(ln(x))^(2*1.5)};
\addplot [forget plot,black,domain=3e4:4e5, samples=100, solid,ultra thick]{0.01*x^(-1.5)*(ln(x))^(5*1.5)};
\addlegendimage{black,solid,ultra thick}
\addlegendentry{$\sim n^{-1.5} (\log n)^{d\cdot 1.5}$}
		\end{axis}
		\end{tikzpicture}
}
\caption{Sampling errors and integration errors for non-periodic test function $f\in\tilde{H}^{3/2-\varepsilon}_{\textnormal{mix}}([-1,1]^d)$.}
\end{figure}

We use the parameters as in Section~\ref{sec:num:fourier} and apply Algorithm~\ref{algo1} on the test function~$f$ in the non-periodic setting, where we generate the random nodes with respect to the measure $\tilde{\varrho}_D(\mathrm{d}\bx):=\prod_{i=1}^d(\pi\sqrt{1-x_i^2})^{-1} \, \mathrm{d}\bx$.
In Figure~\ref{fig:rand_nodes_chebyshev}, we show realizations for these random nodes.
As before, we do not compute the least squares solution directly but use the iterative method LSQR on the matrix $\bL_m$, $m=\lfloor n/(4\log n)\rfloor$. The obtained sampling errors $\tilde{g}_n:=\|f-S^m_{\bX}f\|_{L_2([-1,1]^d,\varrho_D)}$ with $m=\lfloor n/(4\log n)\rfloor$ are plotted in Figure~\ref{fig:Bspline2:nonper:errors:sampl} as triangles as well as the corresponding approximation errors $\tilde{a}_m:=\|f-P_{m-1} f \|_{L_2([-1,1]^d,\varrho_D)}$ as thick dashed lines. We observe that the sampling and approximation errors almost coincide. Moreover, we plot the graphs $\sim n^{-1.5}(\log n)^{d\cdot 1.5}$ as dotted lines which correspond to the expected theoretical upper bounds $n^{-1.5+\varepsilon}(\log n)^{d\cdot (1.5-\varepsilon)}$. We observe that the obtained numerical errors nearly decay like these theoretical upper bound.

In addition, we use the numerically computed Chebyshev coefficients $c_k$ from Algorithm~\ref{algo1} to compute the approximation $Q_\bX f$ of $I(f)$ by 
$Q_\bX f=\int_D S_\bX^mf\,\mathrm{d}\mu_D=\sum_{k=1}^m c_k \, b_k$
where the complex numbers $b_k$ are calculated as stated in~\eqref{equ:bk:chebyshev}.
We repeatedly perform each test 100 times with different random nodes. The averages for the integration errors $|I(f) - Q_\bX f|$ of the 100 test runs are depicted as triangles in Figure~\ref{fig:Bspline2:nonper:errors:int} and the maxima as error bars. Moreover, we plot the graphs $\sim n^{-2}(\log n)^{d\cdot 2}$ as dotted lines, and we observe that the obtained integration errors approximately decay like these graphs.
For comparison, we also plot $n^{-1.5} (\log n)^{d\cdot 1.5}$ for $d=2$ and $d=5$ as thick solid lines which belong to the theoretical results $n^{-1.5+\varepsilon} (\log n)^{d\cdot (1.5-\varepsilon)}$ one obtains analogously to~\eqref{equ:fourier:wc_int_error} in Section~\ref{mixed1} for the non-periodic case. These thick solid lines decay distinctly slower.

In particular, we strongly expect that the theoretical preasymptotic results in~%
\eqref{eq:sigma_upper_bound_kuehn} and~\cite{Kr18} also hold for the Chebyshev case.

\paragraph{Acknowledgment.} The authors would like to thank Bastian Bohn, Michael Griebel, Karlheinz Gr\"ochenig, Aicke Hinrichs, Boris S.\ Kashin, David Krieg, Thomas K\"uhn, Laura Lippert, Erich Novak, Jens Oettershagen, Peter Oswald, Daniel Potts, Holger Rauhut, Ingo Steinwart, Martin Stoll, Vladimir N.\ Temlyakov and Mario Ullrich for several discussions on the topic.
L.K.\ gratefully acknowledges the funding by the Deutsche Forschungsgemeinschaft (DFG, German Research Foundation, project number 380648269).
T.U.\ would like to acknowledge support by the DFG Ul-403/2-1.
T.V.\ gratefully acknowledges support by the SAB 100378180.
\footnotesize

\end{document}